\numberwithin{equation}{section}
\newcommand{\R}{{\mathbb R}}
\newcommand{\Z}{{\mathbb Z}}
\newcommand{\Q}{{\mathbb Q}}
\newcommand{\T}{{\mathbb T}}
\newcommand{\mc}{\mathcal}
\newcommand{\mb}{\mathbb}
\newcommand{\wh}{\widehat}
\newcommand{\wt}{\widetilde}
\DeclareMathOperator{\ab}{G}
\DeclareMathOperator{\md}{Md}
\DeclareMathOperator{\Span}{Span}
\newtheorem{theorem}{Theorem}[section]
\newtheorem{lemma}[theorem]{Lemma}
\newtheorem{corollary}[theorem]{Corollary}
\newtheorem{proposition}[theorem]{Proposition}
\newtheorem{question}[theorem]{Question}
\newtheorem{problem}[theorem]{Problem}
\theoremstyle{definition}
\newtheorem{defn}[theorem]{Definition}
\newtheorem{remark}[theorem]{Remark}
\begin{document}

\pagestyle{myheadings}
\markboth{}{\rm\textsf{version: 2022/7/24}\hfill\qquad}

\vspace*{-1.5cm}
\title[On sets with missing differences in compact abelian groups]{On sets with missing differences\\ in compact abelian groups}
\author{Pablo Candela}
\address{Instituto de Ciencias Matem\'aticas\\ 
Calle Nicol\'as Cabrera 13-15\\ Madrid 28049\\ Spain}
\email{pablo.candela@icmat.es}
\author{Fernando Chamizo}
\address{Autonomous University of Madrid, and ICMAT\\Ciudad Universitaria de Cantoblanco\\	Madrid 28049\\	Spain}
\email{fernando.chamizo@uam.es}
\author{Antonio Córdoba}
\address{Autonomous University of Madrid, and ICMAT\\Ciudad Universitaria de Cantoblanco\\	Madrid 28049\\	Spain}
\email{antonio.cordoba@uam.es}
\date{}
\subjclass[2010]{Primary 11B30; Secondary 11B75}
\begin{abstract}
A much-studied problem posed by Motzkin asks to determine, given a finite set $D$ of integers, the so-called \emph{Motzkin density} for $D$, i.e., the supremum of upper densities of sets of integers whose difference set avoids $D$. We study the natural analogue of this problem in compact abelian groups. Using ergodic-theoretic tools, this is shown to be equivalent to the following discrete problem: given a lattice $\Lambda\subset \mb{Z}^r$, letting $D$ be the image in $\mb{Z}^r/\Lambda$ of the standard basis, determine the Motzkin density for $D$ in $\mb{Z}^r/\Lambda$. We study in particular the periodicity question: is there a periodic $D$-avoiding set of maximal density in $\mb{Z}^r/\Lambda$? The Greenfeld--Tao counterexample to the periodic tiling conjecture implies that the answer can be negative. On the other hand, we prove that the answer is positive in several cases, including the case rank$(\Lambda)=1$ (in which we give a formula for the Motzkin density), the case rank$(\Lambda)=r-1$, and hence also the case $r\leq 3$. It follows that, for up to three missing differences, the Motzkin density in a compact abelian group is always a rational number.
\end{abstract}
\maketitle

\section{Introduction}

Given an abelian group $\ab$ and a set $D\subset \ab$, we say that a set $A\subset \ab$ is \emph{$D$-avoiding} if the difference set $A-A$ and $D$ are disjoint. It is a well-known fundamental problem in combinatorial number theory to determine, given a finite set $D$ of positive integers, how large a $D$-avoiding subset of $\mb{Z}$ can be. More precisely, the problem consists in estimating the following quantity:
\begin{equation}\label{eq:Md-def}
\sup\,\big\{ \limsup_{N\to\infty} \tfrac{|A\cap [-N,N]|}{2N+1}: A \textrm{ is a $D$-avoiding subset of } \mb{Z}\big\}.
\end{equation}
This question originated more than half a century ago in an unpublished collection of problems posed by T.S.\ Motzkin (see \cite{C&G}).\footnote{The original problem was formulated for subsets of $\mb{N}$, and infinite sets $D$ can also be considered (see \cite{C&G}).} There have been numerous works on this question in the decades since then, including the papers \cite{Gupta,Hara,L&R,Pandey1,Pandey2,Pandey3} addressing various special cases. For a general set $D$, Motzkin's problem is still open, and even the case of three missing differences (i.e.\ with $|D|=3$) is not yet completely solved (relatively recent references on this include \cite{L&R,P&S}).

In this paper we study an analogue of Motzkin's problem in compact abelian groups. Letting $\ab$ be such a group, with Haar probability measure $\mu$, the \emph{Motzkin density} for $D\subset \ab$ (or \emph{measure of intersectivity} of $D$, as per \cite{RuzsaIntersect}) is the supremum of measures of $D$-avoiding Borel sets $A\subset \ab$. We denote this Motzkin density by $\md_{\ab}(D)$. Thus, 
\begin{equation}\label{eq:MD}
 \md_{\ab}(D)
 =
 \sup\big\{
 \mu(A)\,:\, \text{ Borel }A\subset \ab\text{ with }(A-A)\cap D=\emptyset
 \big\}.
\end{equation}
In this paper we consider only finite sets\footnote{Note that $A$ is $D$-avoiding if and only if it is $D\cup (-D)$-avoiding. When we specify the cardinality of $D$, we always tacitly assume that each $t\in D$ is the only element in $D$ from the pair $\{t,-t\}$.}  $D$. The \emph{Motzkin problem} in compact abelian groups can then be stated as follows.
\begin{problem}\label{prob:CAGMotzkin}
Given a compact abelian group $\ab$ and a finite set $D\subset \ab\setminus\{0\}$, determine or estimate the quantity $\md_{\ab}(D)$.
\end{problem}
This problem was studied in \cite{CCRS} focusing on the case of the circle group $\ab=\mb{T}:=\mb{R}/\mb{Z}$. 
This simple setting already brings out interesting connections with tools from graph theory, ergodic theory, the geometry of numbers and Fourier analysis. Let us illustrate this by mentioning some basic results that can be proved using such tools. For any finite set $D$ of numbers in $(0,1)$ such that $D\cup\{1\}$ is linearly independent over $\mb{Q}$, we have $\md_{\mb{T}}(D)=1/2$ (proved using Rokhlin's lemma in \cite[Theorem 2.4]{CCRS}, see also Corollary \ref{cor:KronFree} in this paper); on the other hand, this supremum $\md_{\mb{T}}(D)=1/2$ is not attained, and in fact, for any Borel set $A\subset\mb{T}$ of measure $1/2$, the difference set $A-A$ modulo 1 contains every irrational number in $(0,1)$ (this follows from ergodicity of irrational rotations, and can also be proved using Fourier analysis). More involved results include exact (or asymptotically sharp) formulae for $\md_{\mb{T}}(D)$ for $|D|\leq 2$ (e.g.\ \cite[Theorems 1.2 and 1.3]{CCRS}).

Concerning the more general Problem \ref{prob:CAGMotzkin}, one of the basic open questions is whether the Motzkin density $\md_{\ab}(D)$ is always a rational number. One of the main results in this paper is the following.
\begin{theorem}\label{thm:3rational}
For any compact abelian group $\ab$ and any set $D\subset \ab\setminus\{0\}$ with $|D|\leq 3$, we have $\md_{\ab}(D)\in \mb{Q}$.
\end{theorem}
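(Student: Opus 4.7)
The plan is to combine the two structural results flagged in the abstract: the ergodic-theoretic equivalence that reduces the problem to a lattice quotient $\mb{Z}^r/\Lambda$, and the periodicity theorems that cover every $\Lambda\subset \mb{Z}^r$ with $r\leq 3$.

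The first step is to invoke the equivalence stated in the abstract. For a finite $D\subset \ab\setminus\{0\}$ with $|D|=r\leq 3$, this produces a lattice $\Lambda\subset \mb{Z}^r$ (encoding the $\mb{Z}$-linear relations among the elements of $D$ inside $\ab$) such that
\[
 \md_{\ab}(D)\;=\;\md_{\mb{Z}^r/\Lambda}(D'),
\]
where $D'$ is the image in $\mb{Z}^r/\Lambda$ of the standard basis $\{e_1,\dots,e_r\}$. So it is enough to show that the right-hand side is rational for every $\Lambda\subset \mb{Z}^r$ with $r\leq 3$.

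The second step is to verify that every possible $\Lambda$ falls within reach of the periodicity results. If $\mr{rank}(\Lambda)=r$, the quotient $\mb{Z}^r/\Lambda$ is finite and rationality is immediate from $\md=|A|/|\mb{Z}^r/\Lambda|$ for an optimal $A$. In the remaining cases, $r\leq 3$ forces $\mr{rank}(\Lambda)\in\{0,1,r-1\}$; and for these ranks the periodicity theorems mentioned in the abstract (the rank~$1$ case, the rank~$(r-1)$ case, and their consequence for $r\leq 3$) guarantee the existence of a $D'$-avoiding set $A\subset \mb{Z}^r/\Lambda$ of maximal upper density which is \emph{periodic}, i.e., invariant under translation by some finite-index subgroup $H\leq \mb{Z}^r/\Lambda$.

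The third step is the payoff: a translation-invariant-under-$H$ set is a union of cosets of $H$, so $A=\pi^{-1}(B)$ for the quotient map $\pi\colon \mb{Z}^r/\Lambda\to(\mb{Z}^r/\Lambda)/H$ and some $B\subset (\mb{Z}^r/\Lambda)/H$. Since $(\mb{Z}^r/\Lambda)/H$ is a finite abelian group, the natural density of $A$ equals
\[
 \md_{\mb{Z}^r/\Lambda}(D')\;=\;\frac{|B|}{[\mb{Z}^r/\Lambda:H]}\;\in\;\mb{Q},
\]
which combined with the reduction yields $\md_{\ab}(D)\in\mb{Q}$.

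The main obstacle is not in this final argument, which is essentially a bookkeeping exercise, but in the input it relies on: carrying out the ergodic-theoretic reduction so that the supremum in \eqref{eq:MD} is preserved exactly upon passing to $\mb{Z}^r/\Lambda$, and establishing the periodicity theorems for $\mr{rank}(\Lambda)=r-1$ (whose proof is expected to be the most delicate, since unlike the rank~$1$ case a direct explicit formula seems out of reach, and Greenfeld--Tao shows that such periodicity can genuinely fail in higher corank). Once those are in hand, the rationality of $\md_{\ab}(D)$ for $|D|\leq 3$ follows immediately.
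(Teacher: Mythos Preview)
Your proposal is correct and follows essentially the same route as the paper: reduce via Theorem~\ref{thm:trans} to $\mb{Z}^r/\Lambda$ with $r\le 3$, observe that the possible ranks $0,1,r-1,r$ are all covered (trivially for ranks $0$ and $r$, and by the results of Sections~\ref{sec:rank-1} and~\ref{sec:rank-r-1} for ranks $1$ and $r-1$), and conclude rationality from the existence of a periodic optimal set. The paper's proof of Theorem~\ref{thm:3rational} at the end of Section~\ref{sec:rank-r-1} is precisely this argument, stated more tersely.
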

This theorem is obtained as a combination of several results in this paper, including the following theorem which provides an explicit formula for a special case.

\begin{theorem}\label{thm:rk1-intro}
Let $\ab$ be a compact abelian group, let $D=\{t_1,\ldots,t_r\}\subset \ab\setminus\{0\}$, suppose that the lattice $\{n\in \mb{Z}^r:n_1t_1+\cdots +n_rt_r=0\}$ has rank 1, and let $m\in \mb{Z}^r$ be a generator of this lattice. Then\footnote{Here $\|m\|_{\ell^1}$ is the $\ell^1$-norm of $m=(m_1,\ldots,m_r)$, i.e.\ $\|m\|_{\ell^1}=\sum_{i=1}^r |m_i|$.} $\md_{\ab}(D)=\lfloor\|m\|_{\ell^1}/2\rfloor / \|m\|_{\ell^1}$.
\end{theorem}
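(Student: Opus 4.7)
The approach is to reduce to a combinatorial problem on a discrete finitely generated abelian group, then resolve it via a character construction (for the lower bound) and a closed-walk/averaging argument (for the upper bound). Write $N := \|m\|_{\ell^1}$, $I := \{i : m_i\neq 0\}$, and $\Lambda := \langle m\rangle\subset\mb{Z}^r$. By the reduction from the continuous Motzkin problem in $\ab$ to the discrete one in $\mb{Z}^r/\Lambda$ (established earlier in the paper), it suffices to show that $\md_{\mb{Z}^r/\Lambda}(D')=\lfloor N/2\rfloor/N$, where $D' := \{\bar e_1,\ldots,\bar e_r\}$ is the image of the standard basis.

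For the lower bound, I would define a homomorphism $\chi\colon\mb{Z}^r/\Lambda\to\mb{Z}/N\mb{Z}$ by setting $\chi(\bar e_i):=\mathrm{sign}(m_i)$ for $i\in I$ and $\chi(\bar e_i):=1$ for $i\notin I$. The identity $\sum_i m_i\chi(\bar e_i)=\sum_{i\in I}|m_i|=N\equiv 0\pmod N$ makes this well-defined on the quotient; by construction $\chi(D')\subset\{\pm 1\}$, and since $\pm 1$ generates $\mb{Z}/N\mb{Z}$ the map $\chi$ is surjective. Pick any $T\subset\mb{Z}/N\mb{Z}$ of size $\lfloor N/2\rfloor$ with $(T-T)\cap\{\pm 1\}=\emptyset$ (e.g.\ $T=\{0,2,\ldots,2\lfloor N/2\rfloor-2\}$). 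Then $S:=\chi^{-1}(T)$ has density $\lfloor N/2\rfloor/N$ in $\mb{Z}^r/\Lambda$ and is $D'$-avoiding, since $T-T$ misses $\{\pm 1\}\supseteq\chi(D')$.

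For the upper bound, I would construct a closed walk in the Cayley graph of $\mb{Z}^r/\Lambda$ with generator set $D'\cup(-D')$ by traversing, for each $i\in I$ in turn, $|m_i|$ consecutive edges in direction $\mathrm{sign}(m_i)\bar e_i$. The walk has $N$ steps and closes up, because its total displacement in $\mb{Z}^r$ equals $m$. The main obstacle, and the key technical step, is to verify that the $N$ intermediate positions are pairwise distinct in $\mb{Z}^r/\Lambda$, so that the walk traces an embedded $N$-cycle. Writing any putative coincidence as $v_{j'}-v_j=\ell m$ in $\mb{Z}^r$ (with $j<j'$) and reading off coordinates, one sees that for every $i'\in I$ lying outside the two segments containing $v_j$ and $v_{j'}$ we have $\ell m_{i'}=0$; this forces $\ell=0$ (and then $v_j=v_{j'}$ would contradict the offsets being strictly less than the segment lengths) unless the segments between $j$ and $j'$ exhaust $I$, in which case a direct inspection shows the only possibility is the trivial closure $v_0=v_N$.

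Granting the embedded $N$-cycle $C$ with vertex set $V$, the argument concludes in a standard way: the maximum independent set in $C_N$ has size $\lfloor N/2\rfloor$, so any $D'$-avoiding set $S\subset\mb{Z}^r/\Lambda$ satisfies $|S\cap(g+V)|\le\lfloor N/2\rfloor$ for every $g$. Averaging this inequality over a F\o{}lner sequence for $\mb{Z}^r/\Lambda$ and using that each $s\in S$ lies in exactly $N$ of the translates $g+V$ yields the density bound $\lfloor N/2\rfloor/N$, matching the lower bound.
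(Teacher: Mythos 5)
Your proposal is correct and proves the right statement, but by a route that differs from the paper's in both halves of the discrete argument. Both proofs begin by invoking the transfer result (Theorem~\ref{thm:trans}) to reduce to showing $\md_{\mb{Z}^r/\Lambda}(B_\Lambda)=\lfloor N/2\rfloor/N$ with $N=\|m\|_{\ell^1}$, and both recognize that the combinatorial core is a cycle of length $N$. The differences are as follows. For the lower bound, the paper constructs an explicit periodic $B_\Lambda$-avoiding set as a union of shifts of a ``parity'' subset $P$ inside a fundamental domain $W_S$ associated to a carefully chosen complementary lattice $S=\mb{Z}(e_2-e_1)+\cdots+\mb{Z}(e_r-e_{r-1})$, and verifies the avoidance property by an $\ell^1$-norm estimate on $u\cdot(y-x)$. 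Your construction is more algebraic and, I think, cleaner: you define a surjective homomorphism $\chi:\mb{Z}^r/\Lambda\to\mb{Z}/N\mb{Z}$ sending each $\bar e_i$ into $\{\pm1\}$, well-defined because $\chi(m)=\sum_{i\in I}|m_i|=N\equiv 0$, and pull back a $\{\pm1\}$-avoiding set of size $\lfloor N/2\rfloor$ in $\mb{Z}/N\mb{Z}$. This also sidesteps the paper's reductions in Remark~\ref{rem:reducs} (to all $m_i\ne 0$, all $m_i>0$, and $N$ odd). For the upper bound, the paper shows that the induced subgraph $G[C_S]$ on the tile $C_S$ (the fundamental domain of $\Lambda\oplus S$) is isomorphic to $C_N$, and concludes via the tiling formula \eqref{eq:mdinf}; you instead exhibit the $N$-cycle directly as an embedded closed walk through the coordinate directions and conclude by averaging the independence bound $|S\cap(g+V)|\le\lfloor N/2\rfloor$ over F\o lner sets. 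Your averaging step is elementary and does not need $V$ to be a tile, whereas the paper leans on the tiling machinery of Lemma~\ref{lem:estiminf}. Note, however, that the embeddedness verification you flag as ``the key technical step'' is exactly as delicate as the bijectivity of the map $f:\wt C_S\to\mb{Z}$ in the paper's Lemma~\ref{lem:rank1ub}; the case analysis you outline (distinguishing whether the segments of $v_j$ and $v_{j'}$ already exhaust $I$) does go through, but spelling it out in full would be essential in a final write-up, since the boundary convention for which vertex belongs to which segment matters there.
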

To detail these results further, we require additional terminology and background, so let us defer to the next section the formal statements as well as an outline of the paper.

\bigskip

\noindent \textbf{Acknowledgements.} 
All authors received funding from project PID2020-113350GB-I00 financed by MICIU/AEI/10.13039/501100011033/FEDER, EU. We thank Mihalis Kolountzakis, M\'at\'e Matolcsi, and Arsenii Sagdeev for useful comments.

\section{Initial remarks on Problem \ref{prob:CAGMotzkin}, and statements of main results}

Throughout the paper, we will denote by $\ab$ a compact (Hausdorff) abelian group. Given a finite set $D=\{t_1,\ldots,t_r\}\subset \ab\setminus\{0\}$, we define the lattice
\begin{equation}\label{eq:Lambda}
\Lambda=\Lambda_{\ab,D}:=\{n\in\mb{Z}^r: n_1t_1+\cdots+n_rt_r=0\}.
\end{equation}
In \cite{CCRS}, a generalization of Rokhlin's lemma (from ergodic theory) was used to translate Problem \ref{prob:CAGMotzkin} into the problem of estimating a certain Motzkin density in the discrete setting of the group $\mb{Z}^r/\Lambda$, a setting in which the problem can be easier to solve; see \cite[Theorem 2.6]{CCRS}. We shall revisit this result below, and present a slightly refined version  establishing an \emph{equivalence} of these two problems. To do so, we need to recall the notion of Motzkin density that we use in the discrete setting of a finitely-generated abelian group (formulated in \cite[Definition 2.5]{CCRS}). This involves F\o lner sequences.

For any set $X$ we denote by $\mc{P}_{<\infty}(X)$ the set of all finite subsets of $X$. Given a finitely generated abelian group $\Gamma$, a sequence $(F_N)_{N\in \mb{N}}$ of sets in  $\mc{P}_{<\infty}(\Gamma)$ is a \emph{F\o lner sequence} if
\begin{equation}\label{eq:FC}
 \textrm{for every $g\in \Gamma$ we have }\, \lim_{N\to\infty} \frac{|F_N\Delta (g+F_N)|}{|F_N|}=0,
\end{equation}
where $\Delta$ denotes symmetric difference.

\begin{defn}\label{def:FGMD1}
Let $\Gamma$ be a finitely-generated abelian group, let $E\subset \Gamma$ and let
\begin{equation}\label{eq:subadfn}
\phi_E:\mc{P}_{<\infty}(\Gamma)\to \mb{Z}_{\geq 0},\;\; S\,\mapsto\, \max\{\, |A|:A \subset S,\; (A-A)\cap E=\emptyset\,\}.
\end{equation}
Then we define
\begin{equation}\label{eq:FGMD1}
\md_\Gamma(E):= \lim_{N\to\infty} \frac{\phi_E(F_N)}{|F_N|},\;\textrm{ for any F\o lner sequence $(F_N)_{N\in \mb{N}}$ in $\Gamma$}.
\end{equation}
\end{defn}
\noindent The set function $\phi_E$ is monotone with respect to inclusion, subadditive, and translation invariant, whence the limit in \eqref{eq:FGMD1} exists and is independent of the choice of F\o lner sequence (see \cite[Theorem 6.1]{L&W} or \cite[Proposition 2.2]{D&Z}).

Recall that a set $F$ \emph{tiles} an abelian group $\Gamma$ if there is a set $C\subset \Gamma$ such that $\Gamma=\bigsqcup_{x\in C} (F+x)$, where $\bigsqcup$ denotes pairwise-disjoint union; we then say that $C$ is a \emph{tiling complement} of $F$ in $\Gamma$.

The following lemma gives an alternative expression for $\md_{\Gamma}(E)$ which is closer to the original definition in \cite{C&G}, i.e.\ to \eqref{eq:Md-def}. To state this we recall that a F\o lner sequence $(F_N)_{N\in \mb{N}}$ in $\Gamma$ is said to be a \emph{tiling F\o lner sequence} if each set $F_N$ tiles $\Gamma$. 
\begin{lemma}[See Lemma 2.8 in \cite{CCRS}]
Let $\Gamma$ be a finitely-generated abelian group, and let $\mc{F}=(F_N)_{N\in \mb{N}}$ be a tiling F\o lner sequence in $\Gamma$. Given any set $A\subset \Gamma$, define the upper density $\overline{\delta}_{\mc{F}}(A):=\limsup_{N\to\infty} |A\cap F_N|/|F_N|$. Then for every finite set $E\subset \Gamma$ we have
\begin{equation}\label{eq:FGMD2}
\md_\Gamma(E)=\sup\{\overline{\delta}_{\mc{F}}(A):A\subset\Gamma, (A-A)\cap E=\emptyset\}.
\end{equation}
\end{lemma}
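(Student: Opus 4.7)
My plan is to prove the two inequalities in \eqref{eq:FGMD2}. The bound $\md_\Gamma(E) \geq \sup\{\overline{\delta}_\mc{F}(A):\dots\}$ is immediate: for any $E$-avoiding $A\subset \Gamma$, the intersection $A\cap F_N$ is itself an $E$-avoiding subset of $F_N$, so $|A\cap F_N|\leq \phi_E(F_N)$; dividing by $|F_N|$ and taking the $\limsup$ yields $\overline{\delta}_\mc{F}(A)\leq \md_\Gamma(E)$.

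For the reverse inequality, the strategy is to construct, for each large $N$, a $C$-periodic $E$-avoiding set $A$ of upper density close to $\phi_E(F_N)/|F_N|$. By the footnote after \eqref{eq:MD} I may assume $E=-E$. Let $C$ be a tiling complement of $F_N$, let $B\subset F_N$ be a maximum $E$-avoiding subset (so $|B|=\phi_E(F_N)$), and define the ``$E$-interior'' $F_N^\circ := F_N\cap \bigcap_{e\in E}(F_N-e)$. Set $A_0:=B\cap F_N^\circ$ and $A:=\bigsqcup_{c\in C}(A_0+c)$, which is disjoint by the tiling. To check that $A$ is $E$-avoiding, suppose $a-a'=e\in E$ with $a=a_0+c$, $a'=a_0'+c'$ for $a_0,a_0'\in A_0\subset F_N^\circ$ and $c,c'\in C$. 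Writing $a'=(a_0-e)+c$ and noting that $a_0-e\in F_N$ (since $-e\in E$ and $a_0\in F_N^\circ$), the uniqueness of the tile decomposition forces $c=c'$ and $a_0-a_0'=e\in E$, contradicting the $E$-avoidance of $B$.

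It remains to show $\overline{\delta}_\mc{F}(A) = |A_0|/|F_N| \geq \phi_E(F_N)/|F_N|-o_N(1)$. For the equality, the tiling puts each $c\in C$ into one of three classes according to whether the tile $F_N+c$ sits inside $F_M$, lies outside it, or straddles $\partial F_M$; the Følner property applied to the finitely many shifts by elements of $F_N$ shows the volume of straddling tiles is $o(|F_M|)$, so $|A\cap F_M| = |A_0|\cdot |F_M|/|F_N| + o(|F_M|)$ as $M\to\infty$. For the inequality, $|A_0|\geq |B|-|F_N\setminus F_N^\circ|$, and $|F_N\setminus F_N^\circ|\leq \sum_{e\in E}|F_N\setminus(F_N-e)| = o(|F_N|)$ again by Følner. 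Combining these with Definition \ref{def:FGMD1} and letting $N\to\infty$ gives an $E$-avoiding $A$ with $\overline{\delta}_\mc{F}(A)\to \md_\Gamma(E)$. The main subtlety I expect is the $E$-avoidance argument: it crucially uses that by restricting $B$ to the $E$-interior, any $E$-displacement of an element of $A_0$ stays in the same tile, so that uniqueness of the tile decomposition can be invoked to force a collision within a single tile.
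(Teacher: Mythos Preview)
The paper does not give its own proof of this lemma; it simply cites \cite[Lemma 2.8]{CCRS}. Your argument is correct and is essentially the standard one: the inequality $\md_\Gamma(E)\geq\sup\{\overline{\delta}_{\mc{F}}(A)\}$ is immediate from the definition of $\phi_E$, and for the reverse inequality you periodize a near-optimal $E$-avoiding subset of $F_N$ along a tiling complement, first trimming to the $E$-interior so that the tile-uniqueness forces any putative forbidden difference to occur within a single tile. The only cosmetic point is the phrasing at the end: you do not produce a single $A$ with $\overline{\delta}_{\mc{F}}(A)=\md_\Gamma(E)$, but rather a sequence $A^{(N)}$ of $E$-avoiding sets with $\overline{\delta}_{\mc{F}}(A^{(N)})\to\md_\Gamma(E)$, which is exactly what is needed for the supremum.
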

The equivalence of the definitions of $\md_\Gamma(E)$ in \eqref{eq:FGMD1} and \eqref{eq:FGMD2} can be viewed as an extension (in the case of \emph{finite} sets $E$) of Ruzsa's equivalent characterizations of measures of intersectivity \cite[Theorem 1]{RuzsaIntersect}.

Tiling sets also enable the limit in \eqref{eq:FGMD1} to be expressed more precisely as an infimum. Indeed, letting $\mc{T}_\Gamma$ denote the set of finite subsets of $\Gamma$ that tile $\Gamma$, we have the following fact, which is an application of \cite[Theorem 5.9]{WeissAmenact} (see also \cite[Proposition 2.8]{D&Z}).
\begin{lemma}\label{lem:estiminf}
Let $\Gamma$ be a finitely-generated abelian group, let $\mc{F}=(F_N)_{N\in \mb{N}}$ be a tiling F\o lner sequence in $\Gamma$, and let $E$ be a finite subset of $\Gamma$. Then
\begin{equation}\label{eq:mdinf}
\md_\Gamma(E)=\inf_N \frac{\phi_E(F_N)}{|F_N|}=\inf_{F\in \mc{T}_\Gamma}  \frac{\phi_E(F)}{|F|}.
\end{equation}
\end{lemma}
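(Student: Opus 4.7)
The plan is to establish the chain
\[
\md_\Gamma(E) \;\leq\; \inf_{F\in \mc{T}_\Gamma} \frac{\phi_E(F)}{|F|} \;\leq\; \inf_N \frac{\phi_E(F_N)}{|F_N|} \;\leq\; \md_\Gamma(E),
\]
which forces all three quantities to coincide, yielding both equalities in \eqref{eq:mdinf}. The middle inequality is immediate because every $F_N$ belongs to $\mc{T}_\Gamma$, and the right-hand one is the trivial bound $\inf_N a_N\le \lim_N a_N$ applied to the convergent sequence $a_N:=\phi_E(F_N)/|F_N|$, whose limit is $\md_\Gamma(E)$ by Definition \ref{def:FGMD1}. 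So the only real content is the leftmost inequality: for every $F\in\mc{T}_\Gamma$, show that $\md_\Gamma(E)\leq \phi_E(F)/|F|$.

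The strategy is a standard averaging argument exploiting the fact that $F$ tiles $\Gamma$. Fix $F\in \mc{T}_\Gamma$ with tiling complement $C\subset \Gamma$, so $\Gamma=\bigsqcup_{c\in C}(F+c)$. For each $N$ set $C_N:=\{c\in C : F+c\subseteq F_N\}$, and decompose $F_N=\bigl(\bigsqcup_{c\in C_N}(F+c)\bigr)\sqcup R_N$, where $R_N$ collects the points of $F_N$ that lie in tiles not fully contained in $F_N$. Using the monotonicity, subadditivity, and translation invariance of $\phi_E$ recorded after Definition \ref{def:FGMD1}, this decomposition gives
\[
\phi_E(F_N) \;\leq\; \sum_{c\in C_N}\phi_E(F+c) \,+\, \phi_E(R_N) \;\leq\; |C_N|\,\phi_E(F) + |R_N|.
\]
Since $|C_N|\,|F|\leq |F_N|$, dividing by $|F_N|$ yields $\phi_E(F_N)/|F_N|\leq \phi_E(F)/|F| + |R_N|/|F_N|$, so it suffices to show $|R_N|=o(|F_N|)$ as $N\to\infty$; taking the limit along the F\o lner sequence then gives the desired inequality.

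To estimate $R_N$, observe that any $x\in R_N$ lies in some tile $F+c$ meeting both $F_N$ and its complement, so there exist $f,f'\in F$ with $x=f+c\in F_N$ but $f'+c\notin F_N$. Setting $d:=f'-f\in F-F$, one has $x\in F_N$ and $x+d\notin F_N$, so $x\in F_N\,\Delta\,(F_N-d)$. Hence $R_N\subseteq \bigcup_{d\in F-F}\bigl(F_N\,\Delta\,(F_N-d)\bigr)$, which has size $o(|F_N|)$ because $F-F$ is finite and each term satisfies $|F_N\,\Delta\,(F_N-d)|=o(|F_N|)$ by the F\o lner property \eqref{eq:FC}.

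The main obstacle is precisely this boundary estimate: one must argue carefully that every incomplete tile contributes only points lying in a uniformly bounded neighbourhood of the boundary of $F_N$, so that the F\o lner property applied to the finite translate set $F-F$ suffices to make the total boundary mass negligible. The remaining bookkeeping is elementary once the three basic properties of $\phi_E$ from Definition \ref{def:FGMD1} are in hand.
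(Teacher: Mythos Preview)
Your argument is correct. The paper does not prove this lemma directly; it simply invokes \cite[Theorem 5.9]{WeissAmenact} (with a pointer also to \cite[Proposition 2.8]{D&Z}), which is a general result for monotone subadditive translation-invariant set functions on amenable groups, asserting that the F\o lner limit coincides with the infimum over tiling sets. Your proof is a direct, self-contained unpacking of exactly this fact in the present context: the averaging over tiles $F+c\subset F_N$ and the boundary estimate $R_N\subset\bigcup_{d\in F-F}(F_N\,\Delta\,(F_N-d))$ together constitute the standard Ornstein--Weiss type argument that underlies those references. So the two approaches are essentially the same in spirit; yours simply makes the mechanism explicit rather than citing a black box, at the modest cost of a few lines of bookkeeping.
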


\begin{remark}\label{rem:NatFolner}
Let us mention a standard example of a tiling F\o lner sequence in $\Gamma$, afforded by the fundamental theorem of finitely-generated abelian groups. Given a lattice $\Lambda$ of rank $d$ in $\mb{Z}^r$, let $L\in\mb{Z}^{d\times r}$ be a matrix such that $\Lambda=\mb{Z}^dL$ (the rows of $L$ form a basis\footnote{Recall that a \emph{basis} of a lattice $\Lambda\subset\mb{R}^n$ is a set of linearly-independent elements $v_1,\ldots,v_d$ of $\mb{R}^n$ such that $\Lambda=\mb{Z}v_1+\cdots+\mb{Z}v_d$; see \cite[Definition 1]{G&L}.} of $\Lambda$). It is then well-known that the Smith Normal Form of $L$ yields an isomorphism of the following kind (proving said fundamental theorem):
\begin{equation}\label{eq:FundThm}
\varphi:\mb{Z}^r/\Lambda \; \to \; \mb{Z}_{\alpha_1}\oplus\cdots\oplus \mb{Z}_{\alpha_d}\oplus\mb{Z}^{r-d},
\end{equation}
where $\alpha_1,\ldots,\alpha_d$ are positive integers (with $\alpha_i\mid \alpha_{i+1}$ for each $i$), and $\mb{Z}_{\alpha_i}$ denotes $\mb{Z}/ \alpha_i \mb{Z}$. This yields the following tiling F\o lner sequence in $\mb{Z}^r/\Lambda$:
\begin{equation}\label{eq:NatFolner}
\big(F_N=\varphi^{-1}(\mb{Z}_{\alpha_1}\times \cdots\times \mb{Z}_{\alpha_d}\times [-N,N]^{r-d})\big)_{N\in \mb{N}}.
\end{equation}
However, in what follows we shall not always use this choice of F\o lner sequence nor the isomorphism \eqref{eq:FundThm}, so the general expressions \eqref{eq:FGMD1} and \eqref{eq:mdinf} are worth keeping in mind.
\end{remark}

We can now present the result announced above, showing that the Motzkin problem in compact abelian groups is equivalent to a specific version of the Motzkin problem in finitely-generated abelian groups.

\begin{theorem}\label{thm:trans}
Let $\ab$ be a compact abelian group, and let $D=\{t_1,\ldots,t_r\}\subset \ab\setminus\{0\}$. Let $\Lambda$ be the lattice $\Lambda_{\ab,D}$ in \eqref{eq:Lambda}, and let $B_\Lambda$ be the image of the standard basis $\{e_1,\ldots,e_r\}\subset \mb{Z}^r$ under the canonical homomorphism $\mb{Z}^r\to \mb{Z}^r/\Lambda$. Then $\md_{\ab}(D)=\md_{\mb{Z}^r/\Lambda}(B_\Lambda)$. Conversely, for any lattice $\Lambda\subset \mb{Z}^r$, there is a compact abelian group $\ab=\ab_0\times \mb{T}$, for some finite group $\ab_0$, and a set $D\subset \ab\setminus\{0\}$ of cardinality $r$, such that $\md_{\mb{Z}^r/\Lambda}(B_\Lambda)=\md_{\ab}(D)$.
\end{theorem}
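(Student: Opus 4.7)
The plan is to build both directions of the equivalence around the injective homomorphism $\wt\phi\colon\mb{Z}^r/\Lambda\hookrightarrow\ab$ arising from $e_i\mapsto t_i$: by the definition \eqref{eq:Lambda} of $\Lambda$, the map $n\mapsto\sum_i n_i t_i$ has kernel exactly $\Lambda$, so it factors through an injective $\wt\phi$ whose image $H$ is a countable subgroup of $\ab$ satisfying $\wt\phi(B_\Lambda)=D$. With this dictionary in hand, the two inequalities in the first part become mutually dual transfer arguments (Fubini against Rokhlin), while the converse is an explicit realization of a given $\Lambda$ as $\Lambda_{\ab,D}$.

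The upper bound $\md_{\ab}(D)\le\md_{\mb{Z}^r/\Lambda}(B_\Lambda)$ I would derive by a Fubini slicing argument. Fix a tiling F\o lner sequence $(F_N)$ in $\mb{Z}^r/\Lambda$ and any Borel $D$-avoiding $A\subset\ab$. For each $x\in\ab$ set $A_x:=\{h\in\mb{Z}^r/\Lambda:x+\wt\phi(h)\in A\}$, which is $B_\Lambda$-avoiding since $A-A$ misses $D=\wt\phi(B_\Lambda)$. By Fubini together with translation-invariance of $\mu$,
\[
\int_\ab|A_x\cap F_N|\ud\mu(x)=\sum_{h\in F_N}\mu\bigl(A-\wt\phi(h)\bigr)=|F_N|\,\mu(A),
\]
so some $x$ yields $|A_x\cap F_N|\ge|F_N|\,\mu(A)$. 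Since $A_x\cap F_N$ is a $B_\Lambda$-avoiding subset of $F_N$, its cardinality is at most $\phi_{B_\Lambda}(F_N)$; passing to the infimum over $N$ via Lemma \ref{lem:estiminf} yields $\mu(A)\le\md_{\mb{Z}^r/\Lambda}(B_\Lambda)$.

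The reverse inequality $\md_{\ab}(D)\ge\md_{\mb{Z}^r/\Lambda}(B_\Lambda)$ — essentially the content of \cite[Theorem 2.6]{CCRS} — I would obtain by a Rokhlin-type lift. Given $\epsilon>0$, pick a tile $F\subset\mb{Z}^r/\Lambda$ and a $B_\Lambda$-avoiding $C\subset F$ with $|C|/|F|\ge\md_{\mb{Z}^r/\Lambda}(B_\Lambda)-\epsilon$. The Ornstein--Weiss Rokhlin lemma applied to the free translation action of $H$ on $(\ab,\mu)$ yields a Borel $R\subset\ab$ whose translates $\{R+\wt\phi(f):f\in F\}$ are pairwise disjoint of total measure $\ge 1-\epsilon$; setting $A:=\bigsqcup_{c\in C}(R+\wt\phi(c))$ produces a Borel set of measure $|C|\mu(R)\ge(|C|/|F|)(1-\epsilon)$. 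The differences of $A$ lie in $(R-R)+\wt\phi(C-C)$, and since $C$ avoids $B_\Lambda$ the finite set $S:=\bigcup_{c,c'\in C}\bigl(D-\wt\phi(c-c')\bigr)$ lies in $\ab\setminus\{0\}$, so $A$ is $D$-avoiding precisely when $(R-R)\cap S=\emptyset$. This extra constraint is handled by running the Rokhlin construction inside the compact closure $K:=\overline{H}\subset\ab$ (on which $H$ acts freely with dense orbits, hence ergodically, and which contains $D$), localized to a neighborhood of $0$ small enough that the candidate $R-R$ avoids $S$, and then transferring to $\ab$ via the Haar disintegration along the fibration $\ab\to\ab/K$; differences that cross $K$-cosets automatically lie outside $K\supset D$.

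For the converse, given $\Lambda\subset\mb{Z}^r$ — which implicitly satisfies the non-degeneracy conditions $e_i\notin\Lambda$ and $e_i-e_j\notin\Lambda$ for $i\ne j$, these being forced by the requirements $D\subset\ab\setminus\{0\}$ and $|D|=r$ — the Smith normal form affords an isomorphism $\mb{Z}^r/\Lambda\cong\ab_0\oplus\mb{Z}^{r-d}$ with $\ab_0:=\mb{Z}_{\alpha_1}\oplus\cdots\oplus\mb{Z}_{\alpha_d}$ and $d=\operatorname{rank}(\Lambda)$. Choose $\theta_1,\ldots,\theta_{r-d}\in\mb{T}$ with $\{1,\theta_1,\ldots,\theta_{r-d}\}$ linearly independent over $\mb{Q}$, set $\ab:=\ab_0\times\mb{T}$, and define the injective homomorphism
\[
\iota\colon\mb{Z}^r/\Lambda\hookrightarrow\ab,\quad\bigl(\xi,(n_1,\ldots,n_{r-d})\bigr)\mapsto\bigl(\xi,\ n_1\theta_1+\cdots+n_{r-d}\theta_{r-d}\bigr).
\]
Taking $t_i:=\iota(e_i+\Lambda)$ and $D:=\{t_1,\ldots,t_r\}$, injectivity of $\iota$ combined with the non-degeneracy yields $|D|=r$, $D\subset\ab\setminus\{0\}$, and $\Lambda_{\ab,D}=\Lambda$; the first part then supplies $\md_\ab(D)=\md_{\mb{Z}^r/\Lambda}(B_\Lambda)$. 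I expect the main technical obstacle to be the refined Rokhlin lemma required for the reverse inequality: a textbook Rokhlin delivers only a disjointly-translating tile $R$, but $D$-avoidance of the lift additionally demands control of $R-R$, which is why the argument must be routed through the compact subgroup $K=\overline{H}$ and completed using the Haar decomposition of $\ab$ along $\ab/K$.
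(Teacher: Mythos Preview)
Your converse construction via Smith normal form and rationally independent circle elements is essentially the paper's. For the forward direction, the paper does not argue from scratch: it cites \cite[Theorem~2.6]{CCRS} (which is the Fubini--Rokhlin argument you sketch) for metrizable $\ab$, and then supplies a separate reduction to that case. Your Fubini slicing for $\md_\ab(D)\le\md_{\mb{Z}^r/\Lambda}(B_\Lambda)$ is correct and needs no metrizability. The genuine gap is in the reverse inequality: the Ornstein--Weiss Rokhlin lemma you invoke is stated for actions on \emph{standard} probability spaces, i.e.\ it requires $\ab$ to be metrizable, and passing to $K=\overline{H}$ does not sidestep this --- the closure of a cyclic subgroup in a non-metrizable compact abelian group can be all of $\ab$ (take $\ab=\mb{T}^I$ with $I$ uncountable and $t_1=(\alpha_i)_{i\in I}$ with $\{1\}\cup\{\alpha_i\}$ rationally independent; then $\overline{\mb{Z}t_1}=\ab$). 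The subsequent transfer along $\ab\to\ab/K$ via Haar disintegration would likewise want a Borel section, again a metrizability issue. The paper's fix (Proposition~\ref{prop:metrizclaim}) is different in nature: each near-optimal $D$-avoiding set has countable Fourier support by Plancherel, so one can quotient $\ab$ by the annihilator of a suitable countable subgroup of $\wh\ab$ to land in a metrizable group without changing $\md$ or $\Lambda$.

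Your handling of the $D$-avoidance of the Rokhlin lift is also more involved than necessary. The worry that $R-R$ must avoid the finite set $S$ is real, but rather than seeking a ``localized'' Rokhlin base (which is not obviously obtainable while keeping the tower of near-full measure), you can simply trim the discrete set: replace $C$ by $C\cap\{f\in F:f+\bar e_i\in F\text{ for all }i\in[r]\}$, losing only $o(|F|)$ elements by the F\o lner property. Then for any $c,c'$ in the trimmed $C$, both $c$ and $c'+\bar e_i$ lie in $F$ and are distinct (as $C$ is $B_\Lambda$-avoiding), so the tower disjointness alone gives $(R+\wt\phi(c))\cap(R+\wt\phi(c'+\bar e_i))=\emptyset$, hence $t_i\notin A-A$. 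No control of $R-R$, no passage to $K$, and no fibration argument is needed once the metrizable case is in hand.
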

\begin{proof}
The first implication follows from \cite[Theorem 2.6]{CCRS} provided that $\ab$ satisfies the additional assumption (which was made in \cite[Theorem 2.6]{CCRS} for technical convenience) that $\ab$ is a \emph{metrizable} compact abelian group (equivalently, the Pontryagin dual of $\ab$ is countable; see \cite[Theorem 8.45]{H&Mo3}). This assumption can be removed, in the sense that the following claim holds: if $\ab$ is a general (not necessarily metrizable) compact (Hausdorff) abelian group, and $D$ is a finite subset of $\ab\setminus\{0\}$, then there exists a compact metrizable abelian group $H$ and a continuous surjective homomorphism $q:\ab\to H$, such that $\md_{\ab}(D)=\md_H(q(D))$ and $\Lambda_{H,q(D)}=\Lambda_{\ab,D}$. We leave the proof of this claim to Appendix \ref{App:reduc} so as not to lengthen the present proof unduly. This claim and the proof of \cite[Theorem 2.6]{CCRS} for the metrizable case yield the first implication in the theorem.

For the converse, given the lattice $\Lambda\subset \mb{Z}^r$, we shall find a compact abelian group $\ab=\ab_0\times \mb{T}$ and a set $D=\{t_1,\ldots,t_r\}\subset \ab\setminus\{0\}$ such that $\Lambda$ is the kernel of the homomorphism $n\in \mb{Z}^r\mapsto n_1t_1+\cdots+n_rt_r\in \ab$. If we do this, then (since $\ab$ is metrizable) by \cite[Theorem 2.6]{CCRS} we will have $\md_{\mb{Z}^r/\Lambda}(B_\Lambda)=\md_{\ab}(D)$, as required.  Let $L\in \mb{Z}^{d\times r}$ be a matrix of rank $d$ such that $\Lambda=\mb{Z}^d L$ (where $d$ is the rank of the lattice $\Lambda$, so the rows of $L$ form a basis of $\Lambda$). Let $U\in \mb{Z}^{d\times d}$, $V\in \mb{Z}^{r\times r}$ be unimodular matrices such that $L=USV$ where $S=\big(\textrm{diag}(\alpha_1,\ldots,\alpha_d) ~ | ~ 0^{d\times (r-d)}\big)\in \mb{Z}^{d\times r}$ is the Smith Normal Form of $L$. Thus $\Lambda=\mb{Z}^d SV$. Note that it suffices to find $\ab$ and $D'=\{t_1',\ldots,t_r'\}\subset \ab$ such that $\mb{Z}^d S=\{n\in\mb{Z}^r:n\cdot (t_1',\ldots,t_r')=0\}$, as then $\Lambda = \mb{Z}^d SV = \{nV\in\mb{Z}^r:(nV)\cdot (V^{-1})^T(t_1',\ldots,t_r')=0\} = \{n\in\mb{Z}^r:n\cdot (t_1,\ldots, t_r)=0\}$ for $(t_1,\ldots,t_r)=(V^{-1})^T(t_1',\ldots,t_r')$, so these elements $t_i$ would form a set $D$ with the desired property. Now let $\ab_0=\mb{Z}_{\alpha_1}\times \cdots\times \mb{Z}_{\alpha_d}$ and $\ab=\ab_0\times \mb{T}$. We shall find $t_1',\ldots,t_r'\in \ab$ such that $\mb{Z}^dS = (\mb{Z} \alpha_1,\ldots,\mb{Z}\alpha_d,0,\ldots,0)$ is exactly the kernel of the homomorphism $n\in \mb{Z}^r\mapsto n_1 t_1'+\cdots + n_rt_r'\in \ab$.  Let $\beta_1,\ldots,\beta_{r-d}\in (0,1)$ be numbers such that $1,\beta_1,\ldots,\beta_{r-d}$ are linearly independent over $\mb{Q}$. For $i\in [d]$ let $t_i'$ be the element of $\ab$ with a coordinate $1$ in the $\mb{Z}_{\alpha_i}$-component, and all other coordinates $0$, and for $i=d+j$, $j\in [r-d]$, let $t_i'$ be the element of $\ab$ with $\beta_j$ in the $\mb{T}$-component, and all other coordinates $0$. With this choice and the independence of the $\beta_i$ and $1$, it is clear that the $t_i'$ satisfy the claimed property.
\end{proof}
By Theorem \ref{thm:trans}, solving the Motzkin problem in compact abelian groups (Problem \ref{prob:CAGMotzkin}) is equivalent to solving the following.
\begin{problem}\label{prob:FGMotzkin}
Given a lattice $\Lambda$ in $\mb{Z}^r$, determine or estimate the quantity $\md_{\mb{Z}^r/\Lambda}(B_\Lambda)$.
\end{problem}

From now on we focus on Problem \ref{prob:FGMotzkin}. The scope of this problem is further illustrated by the fact that it  includes also, as a special case, the natural multidimensional generalization of Motzkin's original problem, namely, the generalization consisting in determining  $\md_{\mb{Z}^k}(D)$ for finite sets $D\subset \mb{Z}^k$, for $k>1$. Indeed, in Section \ref{sec:multidimcase} we will prove that for every positive integer $k$, the Motzkin problem in $\mb{Z}^k$ is equivalent to the special case of Problem \ref{prob:FGMotzkin} where the lattice $\Lambda$ has a basis that is primitive\footnote{A linearly independent set $\{b_1,\ldots,b_k\}\subset \mb{Z}^r$ is said to be \emph{primitive in $\mb{Z}^r$} if $\Span_{\mb{R}}(\{b_1,\ldots,b_k\})\cap \mb{Z}^r = \Span_{\mb{Z}} \{b_1,\ldots,b_k\}$ (see \cite[p.\ 21, Definition 2]{G&L}).} in $\mb{Z}^r$ (see Proposition \ref{prop:equivZkrefined}).

\begin{remark}
There are instances of the Motzkin problem in a compact abelian group $\ab$ in which the corresponding lattice $\Lambda$ has no basis that is primitive in $\mb{Z}^r$. This clarifies that the Motzkin problem in $\mb{Z}^k$ is equivalent to a \emph{proper} special case of Problem \ref{prob:FGMotzkin} (or equivalently, of the Motzkin problem in compact abelian groups, Problem \ref{prob:CAGMotzkin}). We can illustrate this with $r=2$ and $\ab=\mb{T}$, with an example of missing differences $t_1,t_2\in\ab$ such that the lattice $\Lambda\subset \mb{Z}^2$ has rank 1 and is generated by a non-primitive vector. Identifying $\mb{T}\setminus\{0\}$ with the interval $(0,1)$, let $t_1,t_2$ be irrational numbers in $(0,1)$ such that $2t_1+2t_2=1$ (e.g.\ $t_1=\sqrt{2}/4$, $t_2=1/2-t_1$).
 Then for any coprime non-zero integers $a,b$ we have $at_1+bt_2\not\in\mb{Z}$. Indeed, suppose for a contradiction that there is a non-zero integer $c$ with $at_1+bt_2=c$. Then $t_2=\frac{c-at_1}{b}$, and so (since $2t_1+2t_2=1$) we have $b=2t_1b+2t_2b=2t_1b+2c-2at_1 = 2(b-a)t_1+2c$. If $a\neq b$ then we deduce that $t_1=(b-2c)/2(b-a)$, contradicting $t_1\not\in \mb{Q}$. Hence $a$ and $b$ must be coprime and \emph{equal}, so either 
$a=b=1$, which cannot hold since that would imply $c=1/2$, or $a=b=-1$, which cannot hold either since it would imply $c=-1/2$. Hence no coprime integers $a,b$ satisfy $a t_1+bt_2 =0$ in $\mb{T}$ (in fact no distinct integers $a,b$ satisfy this).
\end{remark}

As recalled in the introduction, already in the case of the original Motzkin problem in $\mb{Z}$, determining the exact value of $\md_{\mb{Z}}(D)$ for finite sets $D$ is hard and still open. For the more general Problem \ref{prob:FGMotzkin}, in this paper we examine more basic questions.

Given a lattice $\Lambda\subset \mb{Z}^r$, the following are three natural questions of this type. Let us emphasize immediately that, as discussed below, the answer to some of these questions can depend on $r$ and $\Lambda$.
\begin{question}\label{Q:rationality}
Is the number $\md_{\mb{Z}^r/\Lambda}(B_\Lambda)$ rational? 
\end{question}
By Lemma \ref{lem:estiminf}, in principle we can give arbitrarily accurate upper bounds for $\md_{\mb{Z}^r/\Lambda}(B_\Lambda)$ of the form $\phi_{B_\Lambda}(F)/|F|$ for finite tiling sets $F\subset \mb{Z}^r/\Lambda$. Is any of these bounds \emph{exact}?
\begin{question}\label{Q:min}
Is the infimum in \eqref{eq:mdinf} \textup{(}for $\Gamma=\mb{Z}^r/\Lambda$ and $E=B_\Lambda$\textup{)} a minimum? 
\end{question}

Clearly, a positive answer to Question \ref{Q:min} for a given $\Lambda\leq \mb{Z}^r$ also answers Question \ref{Q:rationality} positively for this $\Lambda$. Another closely related matter is the following.
\begin{question}\label{Q:periodicity}
Is the supremum in \eqref{eq:FGMD2} \textup{(}for $\Gamma=\mb{Z}^r/\Lambda$ and $E=B_\Lambda$\textup{)} attained by a periodic $B_\Lambda$-avoiding set?
\end{question}
Here, by a periodic subset of $\Gamma$ we mean a (finite) union of cosets of a finite-index subgroup of $\Gamma$. Again, an affirmative answer to Question \ref{Q:periodicity} would solve Question \ref{Q:rationality} positively for the given $\Lambda\leq \mb{Z}^r$. Moreover, it is not hard to see that it would also solve Question \ref{Q:min} positively.

These questions are related to more widely known problems. For example, by Proposition \ref{prop:equivZkrefined}, Question \ref{Q:rationality} includes as a special case the question of whether Motzkin densities in $\mb{Z}^k$ are rational, and the latter in turn includes the question of whether maximal packing densities in $\mb{Z}^k$ are rational, mentioned as an open problem in \cite{Schmidt&Tuller} and more recently in \cite{F&K&S}. We discuss these relationships in more detail in Subsection \ref{subsec:multidimcase}. Moreover, through these connections it also follows that Question \ref{Q:periodicity} can have a \emph{negative} answer for some lattices $\Lambda$; we detail this also in Subsection \ref{subsec:multidimcase}, using the recent result of Greenfeld and Tao \cite{Greenfeld&Tao} on aperiodic tilings of $\mb{Z}^k$. Thus, the answer to Question \ref{Q:periodicity} depends indeed on $\Lambda$, and this leads to the problem of determining under what conditions on $r$ and $\Lambda$ the answer to this question is positive. For questions \ref{Q:rationality} and \ref{Q:min}, no instances with negative answers are known so far.

When $\Lambda$ is the trivial lattice $\{0\}$, the answer to Question \ref{Q:periodicity} is trivially affirmative, and Problem \ref{prob:FGMotzkin} is easily solved, obtaining $\md_{\mb{Z}^r/\Lambda}(B_\Lambda)=1/2$ (the application of this fact to Motzkin's problem in compact abelian groups was noted in \cite[Theorem 2.4]{CCRS}). Another case of the periodicity question in which the answer is trivially affirmative is when $\Lambda$ has full rank $r$, in which case the group $\mb{Z}^r/\Lambda$ is finite. In Sections \ref{sec:rank-1} and \ref{sec:rank-r-1} we address some initial non-trivial cases of the above questions, specifically, the cases rank$(\Lambda)=1$ (Section \ref{sec:rank-1}) and rank$(\Lambda)=r-1$ (Section \ref{sec:rank-r-1}), in both of which we give an affirmative answer to Question \ref{Q:periodicity} (hence also to Questions \ref{Q:min} and \ref{Q:rationality}). A combination of these facts will yield a proof of the following result, which implies Theorem \ref{thm:3rational} (via Theorem \ref{thm:trans}).

\begin{proposition}\label{prop:maindiscrete}
For any lattice $\Lambda \subset \mb{Z}^r$ with $r\leq 3$, the supremum $\md_{\mb{Z}^r/\Lambda}(B_\Lambda)=\sup\{\overline{\delta}_{\mc{F}}(A):A\subset\mb{Z}^r/\Lambda, (A-A)\cap B_\Lambda=\emptyset\}$ is attained by a periodic $B_\Lambda$-avoiding set.
\end{proposition}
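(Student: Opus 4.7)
The plan is to prove the proposition by a case analysis on the rank $d$ of the lattice $\Lambda$, which satisfies $0\leq d\leq r\leq 3$. The four possible ranks split naturally into two trivial extremes ($d=0$ and $d=r$) and two non-trivial intermediate cases ($d=1$ and $d=r-1$), the latter two being exactly those addressed in Sections \ref{sec:rank-1} and \ref{sec:rank-r-1}.

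First I would dispose of the extreme cases. If $d=0$ then $\Lambda=\{0\}$, and as noted in the discussion preceding the proposition, the set $A=\{n\in\mb{Z}^r:n_1\text{ is odd}\}$ (say) is a periodic $B_\Lambda$-avoiding set of upper density $1/2$, which matches the trivial upper bound $\md_{\mb{Z}^r/\Lambda}(B_\Lambda)\leq 1/2$. If $d=r$ then $\mb{Z}^r/\Lambda$ is finite, so by \eqref{eq:mdinf} applied to the (tiling F\o lner) sequence constantly equal to $\mb{Z}^r/\Lambda$ itself, the value $\md_{\mb{Z}^r/\Lambda}(B_\Lambda)$ is attained by a maximal $B_\Lambda$-avoiding subset of the finite group $\mb{Z}^r/\Lambda$, and every such subset is trivially periodic (it equals a union of cosets of the trivial subgroup, which has finite index in $\mb{Z}^r/\Lambda$).

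Next, for the intermediate cases, I would simply invoke the main results of the two preceding sections. When $r\leq 3$ the only remaining possibilities are $(r,d)\in\{(2,1),(3,1),(3,2)\}$. In the cases $(2,1)$ and $(3,1)$, the lattice has rank $1$, so Section \ref{sec:rank-1} applies and provides an affirmative answer to Question \ref{Q:periodicity}, yielding a periodic $B_\Lambda$-avoiding set realising the supremum (consistent with the explicit formula in Theorem \ref{thm:rk1-intro}). In the case $(3,2)$ we have $d=r-1$, so Section \ref{sec:rank-r-1} applies and gives the same conclusion. Note that the case $(2,1)$ is simultaneously covered by both sections, as there $1=r-1$; this is a sanity check rather than a conflict, since both results assert attainment by a periodic set.

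Since no obstacle beyond the case split arises here, the bulk of the real work lies in Sections \ref{sec:rank-1} and \ref{sec:rank-r-1}, which this proof treats as black boxes. The only point requiring a small check would be the trivial cases: in particular, for $d=0$ with $r>1$ one should note that $B_\Lambda=\{e_1,\ldots,e_r\}$ in $\mb{Z}^r$, and a standard half-space construction such as $A=\{n\in\mb{Z}^r:\lfloor n_1/k\rfloor\text{ is even for appropriate }k\}$, or more simply the set $\{n:n_1\equiv 0\pmod 2\}$ paired with the observation $(A-A)\cap B_\Lambda=\emptyset$ when $e_1\notin A-A$, must be adjusted so that none of $e_1,\ldots,e_r$ lies in $A-A$; taking $A$ to be the preimage of a half-period set under projection onto a single coordinate chosen appropriately (or simply a union of cosets mod $2$ in each coordinate avoiding each $e_i$) suffices, and is plainly periodic.
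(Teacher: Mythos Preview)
Your approach is essentially the paper's: a case split on $d=\mathrm{rank}(\Lambda)\in\{0,1,\ldots,r\}$ for $r\leq 3$, disposing of the trivial extremes and invoking Sections~\ref{sec:rank-1} and~\ref{sec:rank-r-1} for the intermediate ranks. That is exactly how the paper assembles the result (see the proof of Theorem~\ref{thm:3rational}).

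One concrete slip to fix: in the $d=0$ case, the set $A=\{n\in\mb{Z}^r:n_1\text{ odd}\}$ is \emph{not} $B_\Lambda$-avoiding when $r\geq 2$, since for instance $e_2\in A-A$. You notice this yourself in the final paragraph, but the patch you sketch remains vague. The clean choice is $A=V_0=\{n\in\mb{Z}^r:n_1+\cdots+n_r\equiv 0\pmod 2\}$ (as in \eqref{eq:2classes}): then $A-A\subset V_0$ contains none of $e_1,\ldots,e_r$, and $A$ is periodic of density $1/2$. With that correction the argument is complete.
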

In Section \ref{sec:Fourier} we focus on the case $G=\mb{T}$ of Problem \ref{prob:CAGMotzkin} to present bounds for $\md_{\mb{T}}(D)$.

\section{On Motzkin densities in $\mb{Z}^k$}\label{sec:multidimcase}
In this section we prove the following result, which enables Motzkin's problem in $\mb{Z}^k$ to be subsumed as a special case of Problem \ref{prob:FGMotzkin}.
\begin{proposition}\label{prop:equivZkrefined}
Let $r,k$ be positive integers with $r\geq k$. If $\Lambda$ is a lattice in $\mb{Z}^r$ of rank $r-k$ having a basis that is a primitive set in $\mb{Z}^r$, then there is a set $D=\{t_1,\ldots,t_r\}\subset \mb{Z}^k$ that generates $\mb{Z}^k$ and such that $\md_{\mb{Z}^r/\Lambda}(B_\Lambda)=\md_{\mb{Z}^k}(D)$. Conversely, given any set $D=\{t_1,\ldots,t_r\}\subset \mb{Z}^k$ generating $\mb{Z}^k$, there is a lattice $\Lambda \subset \mb{Z}^r$ of rank $r-k$, having a basis that is a primitive set in $\mb{Z}^r$, such that $\md_{\mb{Z}^r/\Lambda}(B_\Lambda)=\md_{\mb{Z}^k}(D)$. 
\end{proposition}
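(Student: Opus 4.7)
The plan is to prove both directions by producing, in each case, an explicit group isomorphism $\varphi:\mb{Z}^r/\Lambda\to\mb{Z}^k$ under which $B_\Lambda$ maps to $D$. Once such a $\varphi$ is in place, the equality $\md_{\mb{Z}^r/\Lambda}(B_\Lambda)=\md_{\mb{Z}^k}(D)$ is immediate: any isomorphism between finitely-generated abelian groups transports F\o lner sequences to F\o lner sequences, preserves cardinalities, and satisfies $\phi_E(S)=\phi_{\varphi(E)}(\varphi(S))$ for every finite $S$ (since isomorphisms preserve difference sets), so Definition \ref{def:FGMD1} yields the equality at once. The work therefore lies entirely in constructing the right $\varphi$ on each side.

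For the forward direction, let $\{b_1,\ldots,b_{r-k}\}$ be a primitive basis of $\Lambda$. By the implication $(i)\Rightarrow (ii)$ of Lemma \ref{lem:primset}, it extends to a basis $\{b_1,\ldots,b_{r-k},c_1,\ldots,c_k\}$ of $\mb{Z}^r$, which gives a direct-sum decomposition $\mb{Z}^r=\Lambda\oplus(\mb{Z} c_1+\cdots+\mb{Z} c_k)$ and hence an isomorphism $\varphi:\mb{Z}^r/\Lambda\to\mb{Z}^k$ that records the $c_j$-coordinates. I then define $D=\varphi(B_\Lambda)=\{\varphi(\pi(e_1)),\ldots,\varphi(\pi(e_r))\}\subset\mb{Z}^k$, where $\pi:\mb{Z}^r\to\mb{Z}^r/\Lambda$ is the canonical projection. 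Because $\{e_1,\ldots,e_r\}$ generates $\mb{Z}^r$, its image $B_\Lambda$ generates $\mb{Z}^r/\Lambda$, so $D$ generates $\mb{Z}^k$, as required.

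For the converse, given $D=\{t_1,\ldots,t_r\}\subset\mb{Z}^k$ generating $\mb{Z}^k$, I set $\psi:\mb{Z}^r\to\mb{Z}^k$, $e_i\mapsto t_i$, so that $\psi$ is surjective with kernel $\Lambda$ of rank $r-k$. The key structural property is that $\Lambda$ is \emph{saturated}: if $nv\in\Lambda$ for some $n\in\mb{Z}\setminus\{0\}$, then $n\psi(v)=0$ in the torsion-free group $\mb{Z}^k$, forcing $\psi(v)=0$ and hence $v\in\Lambda$. Consequently $\Lambda=\mb{R}\Lambda\cap\mb{Z}^r$. Applying Lemma \ref{lem:primbasis} to the rational subspace $\mb{R}\Lambda$, I obtain a basis $\{b_1,\ldots,b_{r-k}\}\subset\mb{Z}^r$ of $\mb{R}\Lambda$ that is primitive in $\mb{Z}^r$; primitivity combined with saturation yields $\Span_{\mb{Z}}\{b_1,\ldots,b_{r-k}\}=\mb{R}\Lambda\cap\mb{Z}^r=\Lambda$, so this is the sought primitive basis of $\Lambda$. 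The surjection $\psi$ then descends to the isomorphism $\mb{Z}^r/\Lambda\to\mb{Z}^k$ mapping $B_\Lambda$ to $D$, closing the converse.

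The main (mildly non-trivial) step is the saturation argument in the converse: torsion-freeness of the target $\mb{Z}^k$ is exactly what promotes a primitive basis of $\mb{R}\Lambda$ to a basis of $\Lambda$ itself, and without it there would be no reason for the kernel of a surjection $\mb{Z}^r\to\mb{Z}^k$ to coincide with its saturation. Everything else is routine bookkeeping built on Lemmas \ref{lem:primset} and \ref{lem:primbasis} together with the evident invariance of $\md$ under group isomorphism.
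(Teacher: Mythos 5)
Your proof is correct and follows essentially the same strategy as the paper: in each direction you build an explicit isomorphism $\mb{Z}^r/\Lambda \to \mb{Z}^k$ carrying $B_\Lambda$ to $D$, then observe that Motzkin density is invariant under such an isomorphism. The only real difference is cosmetic — in the forward direction you extend the primitive basis to a basis of $\mb{Z}^r$ directly via Lemma \ref{lem:primset} (i)$\Rightarrow$(ii) instead of invoking the Smith Normal Form as the paper does, and in the converse you phrase the identification $\Lambda = \mb{R}\Lambda\cap\mb{Z}^r$ in the language of saturation (torsion-freeness of $\mb{Z}^k$) rather than deducing it after the fact from primitivity of the chosen basis; both are reorganizations of the same underlying argument.
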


\begin{remark}\label{rem:reducdim}
The general Motzkin problem in $\mb{Z}^k$, where the missing-difference set $D$ is any finite set of elements of $\mb{Z}^k$, is easily reduced without loss of generality to the case where $D$ generates $\mb{Z}^k$. Indeed, if $D$ generates only a proper subgroup $S$ of $\mb{Z}^k$, then any $D$-avoiding set within $S$ can be copied and translated into the cosets of $S$, yielding a $D$-avoiding subset of $\mb{Z}^k$ of density in $\mb{Z}^k$ equal to the original $D$-avoiding set's density in $S$. Hence the problem reduces to finding $\md_S(D)$. Then, letting $B$ be an integral basis of cardinality $k$ for the lattice $S$, expressing the elements of $S$ in terms of $B$ yields an isomorphism $\varphi:S\to \mb{Z}^k$, which implies that $\md_S(D)=\md_{\mb{Z}^k}(\varphi(D))$, where the set $\varphi(D)$ generates $\mb{Z}^k$ as required.
\end{remark}

To prove Proposition \ref{prop:equivZkrefined}, we first gather some tools from linear algebra and basic geometry of numbers. Recall that a linearly independent set $\{b_1,\ldots,b_k\}\subset\mb{Z}^r$ is \emph{primitive in $\mb{Z}^r$} if $\Span_{\mb{R}}(\{b_1,\ldots,b_k\})\cap \mb{Z}^r = \Span_{\mb{Z}} \{b_1,\ldots,b_k\}$. We shall often abbreviate by saying that such a set is \emph{primitive}, as we will not need to consider primitivity in lattices other than $\mb{Z}^r$. We recall some facts about this notion.
\begin{lemma}\label{lem:primset}
Let $b_1,\ldots,b_k\in\mb{Z}^r$ be linearly independent over $\mb{Q}$, and let $B$ denote the $k\times r$ integer matrix whose $i$-th row-vector is $b_i$. The following properties are equivalent:
\setlength{\leftmargini}{0.8cm}
\begin{enumerate}
\item The set $\{b_1,\ldots,b_k\}$ is primitive.
\item The set $\{b_1,\ldots,b_k\}$ is a subset of a basis of $\mb{Z}^r$.
\item Every non-zero entry in the Smith Normal Form of $B$ equals 1.
\item The columns of $B$ generate $\mb{Z}^k$: $B\mb{Z}^r=\mb{Z}^k$.
\end{enumerate}
\end{lemma}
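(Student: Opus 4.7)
My plan is to fix a Smith Normal Form $B=USV$, with $U\in\mr{GL}_k(\mb{Z})$, $V\in\mr{GL}_r(\mb{Z})$ and $S=\big(\mr{diag}(\alpha_1,\ldots,\alpha_k)\,\big|\,0^{k\times(r-k)}\big)$, and then show that each of (i)--(iv) is equivalent to the single statement that every $\alpha_i$ equals $1$. The $\alpha_i$ are non-zero because the rows $b_1,\ldots,b_k$ of $B$ are linearly independent over $\mb{Q}$. Let $v_1,\ldots,v_r$ denote the rows of $V$; since $V$ is unimodular, these form a $\mb{Z}$-basis of $\mb{Z}^r$, and using unimodularity of $U$ the row lattice of $B$ becomes $\Lambda=\mb{Z}^k B=\mb{Z}^k(SV)=\bigoplus_{i=1}^k\alpha_i\mb{Z}\,v_i$.

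For (iv) $\Leftrightarrow$ (iii), I would compute $B\mb{Z}^r=US\mb{Z}^r=U\bigoplus_{i=1}^k\alpha_i\mb{Z}\,e_i$; since $U\mb{Z}^k=\mb{Z}^k$, this equals $\mb{Z}^k$ iff every $\alpha_i=1$. For (i) $\Leftrightarrow$ (iii), I would use the identity $\Span_{\mb{R}}(\Lambda)\cap\mb{Z}^r=\bigoplus_{i=1}^k\mb{Z}\,v_i$ (the integer points in $\Span_{\mb{R}}(v_1,\ldots,v_k)$, read off in the basis $v_1,\ldots,v_r$), so that primitivity $\Lambda=\Span_{\mb{R}}(\Lambda)\cap\mb{Z}^r$ becomes $\bigoplus_i\alpha_i\mb{Z}\,v_i=\bigoplus_i\mb{Z}\,v_i$, again equivalent to $\alpha_i=1$ for all $i$. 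For (iii) $\Rightarrow$ (ii), when all $\alpha_i=1$ both $\{b_1,\ldots,b_k\}$ and $\{v_1,\ldots,v_k\}$ are $\mb{Z}$-bases of $\Lambda$ and hence differ by a unimodular change of basis $W\in\mr{GL}_k(\mb{Z})$; applying $W^{-1}$ to the first $k$ vectors of $(v_1,\ldots,v_r)$ produces a basis of $\mb{Z}^r$ whose first $k$ elements are $b_1,\ldots,b_k$.

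The step I expect to be the only genuinely delicate one is the converse (ii) $\Rightarrow$ (iii), since to handle it head-on one would have to relate a unimodular square completion $B'$ of $B$ back to the invariant factors of the rectangular $B$ itself (this can be done using that $\alpha_1\cdots\alpha_k$ is the gcd of the $k\times k$ minors of $B$, which has to divide $\det B'=\pm1$). A cleaner way to bypass this is to close the chain as (iv) $\Rightarrow$ (iii) $\Rightarrow$ (i) $\Rightarrow$ (ii) $\Rightarrow$ (iv), where (ii) $\Rightarrow$ (iv) is immediate: if $B$ is the top $k\times r$ block of a unimodular matrix $B'\in\mr{GL}_r(\mb{Z})$, then $B'\mb{Z}^r=\mb{Z}^r$, and the projection $\mb{Z}^r\to\mb{Z}^k$ onto the first $k$ coordinates carries $B'\mb{Z}^r$ onto $\mb{Z}^k$ while coinciding with $B\mb{Z}^r$.
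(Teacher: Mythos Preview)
Your argument is correct. Both you and the paper hinge on the Smith Normal Form, but the organization differs. The paper proves $(i)\Leftrightarrow(ii)$ by citing Gruber--Lekkerkerker, handles $(ii)\Leftrightarrow(iii)$ via the determinantal-divisor characterization (the product $\alpha_1\cdots\alpha_k$ equals the gcd of the $k\times k$ minors), and then closes with $(ii)\Rightarrow(iv)\Rightarrow(iii)$ by direct SNF computation. Your version is more uniform and self-contained: you fix the SNF once, read off the row lattice as $\Lambda=\bigoplus_i\alpha_i\mb{Z}\,v_i$ in the basis given by the rows of $V$, and then show each of (i), (ii), (iv) is equivalent to ``all $\alpha_i=1$'' directly in those coordinates, without invoking external references or the minor-gcd interpretation. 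Your $(iii)\Rightarrow(ii)$ and $(i)\Leftrightarrow(iii)$ in particular are more explicit than the paper's corresponding steps. The trade-off is that the paper's determinantal remark gives an extra useful reformulation of (iii) that your proof does not surface, but for the purpose of establishing the equivalence your route is at least as clean.
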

\begin{proof}
$(i)\Leftrightarrow (ii)$: for the non-trivial (forward) implication, see \cite[p.\ 21, Theorem 5]{G&L}.

$(ii)\Leftrightarrow (iii)$: this follows from standard results (e.g.\  \cite[p.\ 15, Lemma 2]{Cassels}) using that property $(iii)$ is equivalent to the determinants of $k\times k$ submatrices of $B$ being coprime.

$(ii)\Rightarrow (iv)$: fix any $y\in \mb{Z}^k$, and note that by $(ii)$ we can add bottom row vectors to $B$ to obtain a matrix $B^*\in \mb{Z}^{r\times r}$ whose rows form a basis, and therefore $B^*$ is unimodular. Extending $y$ to a vector $y^*\in \mb{Z}^r$ by the addition of arbitrary integer coordinates, it follows that there exists $x^*\in \mb{Z}^r$ such that $B^*x^*=y^*$. The projection of $x^*$ to the first $k$ coordinates is then an element $x\in \mb{Z}^k$ such that $Bx=y$. Hence $(iv)$ holds.

$(iv)\Rightarrow (iii)$: let $B=USV$ with $S$ in Smith Normal Form, where $U\in \mb{Z}^{k\times k}$, $V\in \mb{Z}^{r\times r}$  are unimodular, and $S=\big(\textrm{diag}(\alpha_1,\ldots,\alpha_k)~ | ~ 0^{k\times (r-k)}\big)$ with $\alpha_1\cdots\alpha_k$ equal to the greatest common divisor of the determinants of all $k\times k$ submatrices of $B$ (also known as the $k$-th determinantal divisor of $B$). If $(iii)$ fails, i.e.\ if $\alpha_i>1$ for some $i$, then $B\mb{Z}^r=US\mb{Z}^r$, where the $i$-th coordinate of every element of $S\mb{Z}^r$ lies in the proper subgroup $\alpha_i\mb{Z}$, so $S\mb{Z}^r$ is a proper subgroup of $\mb{Z}^k$, whence so is $B\mb{Z}^r=US\mb{Z}^r$, and therefore $(iv)$ fails.
\end{proof}

Recall that a vector subspace of $\mb{R}^r$ is said to be \emph{rational} if it admits a basis in $\mb{Q}^r$, or equivalently in $\mb{Z}^r$. We shall use the following fact.

\begin{lemma}\label{lem:primbasis}
Every rational subspace of $\mb{R}^r$ has a basis in $\mb{Z}^r$ that is a  primitive set.
\end{lemma}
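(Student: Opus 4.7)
Let $V\subset \mb{R}^r$ be a rational subspace of dimension $k$, and consider the subgroup $\Lambda_V:=V\cap \mb{Z}^r$. The plan is to show that $\Lambda_V$ is a lattice of rank exactly $k$ in $V$, and that any $\mb{Z}$-basis of $\Lambda_V$ is then automatically a primitive basis of $V$ in $\mb{Z}^r$.

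First I would verify the rank claim for $\Lambda_V$. Since $V$ is rational, clearing denominators in any $\mb{Q}$-basis of $V$ yields vectors $v_1,\ldots,v_k\in V\cap \mb{Z}^r$ that are $\mb{Z}$-linearly independent, so $\Lambda_V$ has rank at least $k$. Conversely, $\Lambda_V$ is a discrete subgroup of $V\cong \mb{R}^k$, so by a standard result from the geometry of numbers (see e.g.\ \cite[p.\ 14, Lemma 1]{Cassels}) it is a free $\mb{Z}$-module of rank at most $k$. Hence $\Lambda_V$ is a rank-$k$ lattice contained in $V$.

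Now fix any $\mb{Z}$-basis $b_1,\ldots,b_k$ of $\Lambda_V$. These vectors are $\mb{R}$-linearly independent and lie in the $k$-dimensional space $V$, hence $\Span_{\mb{R}}\{b_1,\ldots,b_k\}=V$. By the very definition of $\Lambda_V$,
\begin{equation*}
\Span_{\mb{R}}\{b_1,\ldots,b_k\}\cap \mb{Z}^r \;=\; V\cap \mb{Z}^r \;=\; \Lambda_V \;=\; \Span_{\mb{Z}}\{b_1,\ldots,b_k\},
\end{equation*}
which is exactly the primitivity condition recalled above. Thus $\{b_1,\ldots,b_k\}$ is a primitive basis of $V$ in $\mb{Z}^r$. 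The only step with any real content is the first one, namely identifying $\Lambda_V$ as a lattice of full rank in $V$; the primitivity of any of its $\mb{Z}$-bases then falls out tautologically from the definition.
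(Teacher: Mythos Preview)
Your proof is correct and takes a genuinely different route from the paper's. The paper proceeds constructively via the Smith Normal Form: starting from an arbitrary integral basis arranged as the rows of a matrix $B$, it writes $B=USV$, notes that the rows of $SV$ still form an integral basis of the subspace, and then divides the $i$-th row of $SV$ by the diagonal entry $\alpha_i$ to obtain a new integral basis $S'V$ whose Smith Normal Form has all nonzero entries equal to $1$; primitivity then follows from the characterization in Lemma~\ref{lem:primset}(iii). Your argument instead goes straight to the lattice $\Lambda_V = V\cap\mb{Z}^r$, shows it has full rank $k$ in $V$, and observes that any $\mb{Z}$-basis of it satisfies the primitivity condition tautologically from the definition. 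Your route is more conceptual and sidesteps both the Smith Normal Form computation and the auxiliary equivalences in Lemma~\ref{lem:primset}; the paper's route is more algorithmic, explicitly exhibiting how to transform a given integral basis into a primitive one.
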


\begin{proof}
Let $B\in \mb{Z}^{k\times r}$ be a matrix having as rows an integral basis of the subspace, thus the subspace is $\mb{R}^r B$. Consider the Smith Normal Form $B=USV$. Then the rows of $U^{-1}B=SV$ also form an integral basis of this subspace (since $U$ is invertible). 
Changing all the entries $\alpha_i$ in the diagonal of $S$ to 1, and naming the resulting matrix $S'$, we have that $S'V$ is still an integral basis of this subspace (indeed it is the basis obtained by dividing the $i$-th row of $SV$ by $\alpha_i$, for each $i\in [k]$). The rows of $S'V$ form a primitive set in $\mb{Z}^r$ by Lemma 3.1 $(iii)$.
\end{proof}

We can now prove the main result.

\begin{proof}[Proof of Proposition \ref{prop:equivZkrefined}]
Let $d=r-k$. To see the first implication, let $L\in \mb{Z}^{d\times r}$ be a matrix whose rows form a primitive basis of $\Lambda$, so $\Lambda=\mb{Z}^d L$, and let $L=U S V$ be the Smith Normal Form of $L$, where  $S=(I_d|0^{d\times k})$ by primitivity and Lemma \ref{lem:primset}. Let $\varphi_V$ be the isomorphism $\mb{Z}^r/\Lambda \to \mb{Z}^r/\mb{Z}^d S$, $x+\Lambda \mapsto xV^{-1} + \Lambda V^{-1} =  xV^{-1} + \mb{Z}^dS$. Then $A\subset \mb{Z}^r/\Lambda$ is $B_\Lambda$-avoiding if and only if $\varphi_V(A)$ is $D$-avoiding for $D=\varphi_V(B_\Lambda)$. Given the diagonal form of $S$, it is also clear that $\mb{Z}^r/\mb{Z}^dS\cong\mb{Z}^k$. Moreover, since $B_\Lambda$ clearly generates $\mb{Z}^r/\Lambda$, the set $D$ generates $\mb{Z}^k$. Finally, as F\o lner sequences in $\mb{Z}^r/\Lambda$ are mapped to F\o lner sequences in $\mb{Z}^k$ and vice versa by $\varphi_V,\varphi_V^{-1}$, we conclude that  $\md_{\mb{Z}^r/\Lambda}(B_\Lambda)=\md_{\mb{Z}^k}(D)$ as claimed.

We now prove the converse. Let $B\in\mb{Z}^{k\times r}$ be the matrix with columns $t_1,\ldots,t_r$. By Lemma \ref{lem:primbasis} the subspace $\ker(B)$ of $\mb{R}^r$ has a  basis, consisting of $r-k=d$ integer vectors, which is a primitive set in $\mb{Z}^r$. Let $L\in\mb{Z}^{d\times r}$ have these basis vectors as rows, and let $\Lambda$ be the lattice $\mb{Z}^d L$. We shall now see that $\md_{\mb{Z}^r/\Lambda}(B_\Lambda)=\md_{\mb{Z}^k}(D)$, which will complete the proof. To see this, note first that since the rows of $L$ form a primitive set, their span $\ker(B)$ satisfies $\ker(B)\cap \mb{Z}^r=\Lambda$. Moreover, by assumption the columns of $B$ generate $\mb{Z}^k$, so the image of the homomorphism $B:\mb{Z}^r\to \mb{Z}^k$ is  $\mb{Z}^k$. It follows from the first isomorphism theorem that there is an isomorphism $\varphi_B:\mb{Z}^r/\Lambda\to\mb{Z}^k$ such that $B=\varphi_B\circ \pi_\Lambda$, where $\pi_\Lambda: \mb{Z}^r\to \mb{Z}^r/\Lambda$ is the canonical quotient map. Note that $\varphi_B$ maps $B_\Lambda$ to $D$. This implies, similarly as in the previous paragraph, that $\md_{\mb{Z}^r/\Lambda}(B_\Lambda)=\md_{\mb{Z}^k}(D)$.
\end{proof}

\subsection{Connections with tiling and packing problems}\label{subsec:multidimcase}\hfill\smallskip\\
In this subsection we briefly discuss the relation between the property of $A\subset \mb{Z}^k$ being a $D$-avoiding set, and the stronger property of $A$ being a \emph{tiling complement} in $\mb{Z}^k$ of some finite set $F$. Recall that given an abelian group $\Gamma$, we denote by $\mc{T}_\Gamma$ the set of finite subsets that tile $\Gamma$. 
\begin{lemma}\label{lem:optimatile}
Let $F\in \mc{T}_{\mb{Z}^k}$, and let $D=(F-F)\setminus \{0\}$. Then $\md_{\mb{Z}^k}(D)=\frac{1}{|F|}$.
\end{lemma}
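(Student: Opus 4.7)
The plan is to establish the two inequalities $\md_{\mb{Z}^k}(D)\leq 1/|F|$ and $\md_{\mb{Z}^k}(D)\geq 1/|F|$ separately, exploiting that a tiling set and any of its tiling complements are in a sense dual witnesses for the bound.

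For the upper bound, the strategy is to apply Lemma \ref{lem:estiminf} directly to the set $F$ itself. Since $F\in\mc{T}_{\mb{Z}^k}$, one has $\md_{\mb{Z}^k}(D)\leq \phi_D(F)/|F|$. I would then observe that $\phi_D(F)=1$: if $A\subset F$ contained two distinct elements $a_1\neq a_2$, then $a_1-a_2\in (F-F)\setminus\{0\}=D$, contradicting $(A-A)\cap D=\emptyset$, while any singleton trivially is $D$-avoiding. This gives $\md_{\mb{Z}^k}(D)\leq 1/|F|$.

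For the lower bound, I would exhibit an explicit $D$-avoiding set of density $1/|F|$, namely any tiling complement $C$ of $F$. Writing $\mb{Z}^k=\bigsqcup_{c\in C}(F+c)$, two distinct $c,c'\in C$ satisfy $(F+c)\cap(F+c')=\emptyset$, which is equivalent to $c-c'\notin F-F$, and in particular $c-c'\notin D$. So $(C-C)\cap D=\emptyset$. To compute the density of $C$, I would use the natural tiling F\o lner sequence $F_N=[-N,N]^k$ and the sandwich
\[
|F|\cdot\#\{c\in C:F+c\subseteq F_N\}\;\leq\; |F_N|\;\leq\; |F|\cdot\#\{c\in C:(F+c)\cap F_N\neq\emptyset\}.
\]
Both counts differ from $|C\cap F_N|$ only by elements $c$ with $F+c$ meeting the boundary of $F_N$, and since $F$ is finite this discrepancy is $O(|\partial F_N|)=o(|F_N|)$ by the F\o lner condition. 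Hence $|C\cap F_N|/|F_N|\to 1/|F|$, so by \eqref{eq:FGMD2} applied to the tiling F\o lner sequence $(F_N)$, we obtain $\md_{\mb{Z}^k}(D)\geq\overline{\delta}_{\mc{F}}(C)=1/|F|$.

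The two inequalities combine to give the claimed equality. I do not expect any serious obstacle: the upper bound is a one-line application of Lemma \ref{lem:estiminf} once one notes $\phi_D(F)=1$, and the only mild care is in the boundary estimate for the density of $C$, which is the standard consequence of the F\o lner property.
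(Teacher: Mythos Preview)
Your argument is correct and your two-inequality structure is essentially the same as the paper's, but you organize the details differently. For the upper bound you invoke Lemma~\ref{lem:estiminf} with the single tiling set $F$ itself and observe $\phi_D(F)=1$; this is slicker than the paper, which instead builds a custom F\o lner sequence $F_N=\bigsqcup_{x\in B:\,x+F\subset [-N,N]^k}(x+F)$ (tiled exactly by $F$) and uses pigeonhole on the tiles to bound $\phi_D(F_N)/|F_N|\leq 1/|F|$. For the lower bound both you and the paper use that the tiling complement is $D$-avoiding of density $1/|F|$; the paper's choice of F\o lner sets makes this density computation exact with no boundary term, whereas you work with $[-N,N]^k$ and must absorb an $o(|F_N|)$ discrepancy. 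One small imprecision: your claim that the two counts in the sandwich ``differ from $|C\cap F_N|$ only by elements $c$ with $F+c$ meeting the boundary'' tacitly assumes $0\in F$; without that, $c\in F_N$ need not imply $(F+c)\cap F_N\neq\emptyset$. This is harmless (translate $F$ so that $0\in F$, which leaves $D$ unchanged, or note that all three quantities lie between $|C\cap[-N+M,N-M]^k|$ and $|C\cap[-N-M,N+M]^k|$ for $M=\max_{f\in F}\|f\|_\infty$), but it is worth stating.
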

\begin{proof}
By \eqref{def:FGMD1} it suffices to find a F\o lner sequence $(F_N)_{N\in\mb{N}}$ in $\mb{Z}^k$ such that
\[
\lim_{N\to\infty}\phi_D(F_N)/|F_N|=1/|F|.
\]
Letting $B$ be a tiling complement of $F$, we can form such a sequence with the convenient property that each set $F_N$ is also tiled by $F$, for example
\[
F_N=\bigsqcup_{x\in B:\, x+F\,\subset\, [-N,N]^k} (x+F).
\]
Now, with this sequence $(F_N)_N$, for every $N$ such that $F_N\neq \emptyset$, any $D$-avoiding set $A\subset F_N$ must satisfy $|A|/ |F_N| \leq 1/|F|$, since otherwise $A$ would have density greater than $1/|F|$ inside some tile $x+F$ ($x\in B$), yielding distinct $t,t'\in A\cap (x+F)$, so $A-A$ would contain $t-t'\in D$, contradicting that $A$ is $D$-avoiding. Hence $\phi_D(F_N)/|F_N|\leq 1/|F|$. On the other hand, the set $B\cap F_N$ is clearly $D$-avoiding (if $|B\cap F_N|=0$ or 1 this is trivial, and otherwise there would be distinct $x,x'\in B$ and $t,t'\in F$ such that $x-x'=t-t'$, implying that $(x+F)\cap (x'+F)\neq \emptyset$, contradicting the tiling property), whence $\phi_D(F_N) /|F_N|\geq 1/|F|$ whenever $F_N\neq \emptyset$. Thus $\phi_D(F_N)/|F_N|=1/|F|$ for large $N$, and our claim follows.
\end{proof}
Given a subset $F$ of an abelian group, a set $A$ in this group is said to be an \emph{$F$-packing} (or a \emph{packing complement} for $F$) if $a+F$ and $a'+F$ are disjoint for any $a\neq a'$ in $A$.
\begin{defn}
Let $F\in\mc{T}_{\mb{Z}^k}$. We say that a set $A\subset \mb{Z}^k$ \emph{tightly packs $\mb{Z}^k$ with $F$} (or that $A$ is a \emph{tight packing complement} of $F$ in $\mb{Z}^k$) if $A$ is an $F$-packing (equivalently $A$ is $D$-avoiding for $D=(F-F)\setminus\{0\}$) and $\limsup_{N\to\infty} \frac{|A\cap [-N,N]^k|}{(2N+1)^k}=\frac{1}{|F|}$.
\end{defn}

Clearly, if $A$ is a tiling complement for $F$ in $\mb{Z}^k$ then $A$ is also a tight packing complement of $F$ in $\mb{Z}^k$. The converse is false, indeed if $A'$ is a tiling complement of $F$, then by deleting any finite number of points from $A'$ we obtain a set $A$ that is no longer a tiling complement, yet $A$ still tightly packs $\mb{Z}^k$ with $F$. However, we have the following fact, indicated to us by Mihalis Kolountzakis.

\begin{lemma}\label{lem:tpc-tc}
If $A$ is a \emph{periodic} tight-packing complement of $F\in \mc{T}_{\mb{Z}^k}$, then $A$ is a tiling complement of $F$.
\end{lemma}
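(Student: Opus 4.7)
The plan is to show that tight packing together with periodicity forces $A+F=\mb{Z}^k$, after which the packing property immediately upgrades this to a tiling.

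Write $A=\bigsqcup_{c\in C}(c+H)$ for a finite-index subgroup $H\leq\mb{Z}^k$ of index $n$ and a finite set $C$ of coset representatives. Two simple observations: (a) since $A+H=A$, the sum $A+F$ is also $H$-invariant, hence a union of cosets of $H$; (b) periodicity guarantees that the $\limsup$ appearing in the definition of tight packing is in fact a limit, equal to the true density $|C|/n=1/|F|$.

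Next I would compute the density of $A+F$ along $F_N=[-N,N]^k$. Setting $M=\max_{f\in F}\|f\|_\infty$, for every $a\in A\cap F_{N-M}$ the translate $a+F$ is contained in $F_N$, and by the packing property these translates are pairwise disjoint. Hence
\[
|(A+F)\cap F_N|\;\geq\;|F|\cdot|A\cap F_{N-M}|.
\]
Dividing by $|F_N|$ and letting $N\to\infty$ (the boundary ratio $|F_{N-M}|/|F_N|$ tends to $1$) yields density at least $|F|\cdot(1/|F|)=1$; since $A+F\subset \mb{Z}^k$, the density is exactly $1$.

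Finally, suppose for contradiction that $A+F\neq\mb{Z}^k$. Then its complement, being a non-empty union of $H$-cosets, contains some coset $g+H$ of density $1/n>0$, forcing the density of $A+F$ to be at most $1-1/n<1$ and contradicting the previous step. Hence $A+F=\mb{Z}^k$, and the packing disjointness makes this a disjoint union, so $A$ is a tiling complement of $F$. I do not foresee a substantial obstacle; the only real bookkeeping is the boundary estimate in the density computation, which is routine since $F$ is finite.
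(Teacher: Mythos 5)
Your proposal is correct and follows essentially the same route as the paper: use periodicity to make the tight-packing density a true limit, use the packing disjointness to show $\delta(A+F)\geq |F|\cdot\delta(A)=1$, and then derive a contradiction from the existence of a point (and hence a full coset) outside $A+F$. The only cosmetic difference is that you handle the boundary estimate with $F_{N-M}\subset F_N$ while the paper phrases it via an $\epsilon$-argument; both are equivalent.
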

\begin{proof}
Being periodic, the set $A$ has a density, i.e.\ the limit 
$\delta(A):=\lim_{N\to\infty}  \frac{|A_N|}{(2N+1)^k}$ exists, where $A_N:=A\cap [-N,N]^k$. Since $A$ is a tight packing complement of $F$, we have $\delta(A)=1/|F|$.  The set $A+F$ is also periodic, so it also has a well-defined density $\delta(A+F)$. 

It follows from the tight-packing assumption that $\delta(A+F)=1$. Indeed, for any $\epsilon>0$, for all large $N$ (depending on $\epsilon$, $F$) we have $|(A+F)_N\Delta (A_N+F)|\leq \epsilon (2N+1)^k$, so $|(A+F)_N|/(2N+1)^k\geq |A_N+F|/(2N+1)^k-\epsilon\geq |A_N||F|/(2N+1)^k-\epsilon$, whence, letting $N\to\infty$, we deduce that $\delta(A+F)\geq |F|\,\delta(A)-\epsilon=1-\epsilon$.  Letting $\epsilon\to 0$, our claim follows. 

Suppose for a contradiction that $A$ is not a tiling complement of $F$, so there exists $x\in \mb{Z}^k\setminus (A+F)$. Then, letting $S$ be a full-rank lattice of periods of $A+F$, the entire coset $x+S$ is disjoint from $A+F$. Moreover, the periodic set $x+S$ has density $\delta(x+S)=1/d$ where $d$ is the index of the subgroup $S$. As density is an additive set function, we have $1\geq \delta((A+F)\cup (x+S))= \delta(A+F) + \delta(x+S)=1+1/d$, a contradiction.
\end{proof}
The well-known periodic tiling conjecture asserts that a set in $\mc{T}_{\mb{Z}^d}$ must necessarily have some \emph{periodic} tiling complement (see \cite{Greenfeld&Tao} and its bibliography for background, in particular \cite{La&Wa}). In a recent breakthrough, Greenfeld and Tao obtained a counterexample to this conjecture \cite{Greenfeld&Tao}. Combining this with the basic facts above, we deduce the following result, showing that the answer to Question \ref{Q:periodicity} can be negative.
\begin{proposition}\label{prop:NegPeriodQ}
For some $k\in \mb{N}$, there exists a finite set $D\subset \mb{Z}^k\setminus\{0\}$ such that the Motzkin density $\md_{\mb{Z}^k}(D)$ is attained only by  $D$-avoiding sets that are not periodic.
\end{proposition}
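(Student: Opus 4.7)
The plan is to combine the Greenfeld--Tao counterexample with Lemmas \ref{lem:optimatile} and \ref{lem:tpc-tc}. First, invoke the main result of \cite{Greenfeld&Tao} to fix some dimension $k\in\mb{N}$ and a finite set $F\in \mc{T}_{\mb{Z}^k}$ such that $F$ has \emph{no} periodic tiling complement in $\mb{Z}^k$ (while still admitting some, necessarily aperiodic, tiling complement, since $F\in\mc{T}_{\mb{Z}^k}$). Then define the candidate missing-difference set
\[
D := (F-F)\setminus\{0\}\;\subset\; \mb{Z}^k\setminus\{0\}.
\]

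Next, compute the Motzkin density: by Lemma \ref{lem:optimatile} we have $\md_{\mb{Z}^k}(D)=1/|F|$. Note also that any $D$-avoiding set is, by construction, exactly an $F$-packing. Let $A_0$ be any tiling complement of $F$ in $\mb{Z}^k$ (guaranteed by $F\in\mc{T}_{\mb{Z}^k}$); then $A_0$ is a $D$-avoiding set of density exactly $1/|F|$, so the supremum defining $\md_{\mb{Z}^k}(D)$ is in fact attained. Thus the statement of the proposition reduces to showing that no attaining $D$-avoiding set can be periodic.

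Finally, suppose for contradiction that some periodic set $A\subset \mb{Z}^k$ is $D$-avoiding and attains $\md_{\mb{Z}^k}(D)$, in the sense that $\overline{\delta}_{\mc{F}}(A)=1/|F|$ for the standard F\o lner sequence $\mc{F}=([-N,N]^k)_{N\in\mb{N}}$. Then $A$ is an $F$-packing of density $1/|F|$, i.e.\ a tight packing complement of $F$ in $\mb{Z}^k$ (a periodic set has an actual density, which equals its upper density along $\mc{F}$, so there is no issue with the limsup versus lim). By Lemma \ref{lem:tpc-tc}, $A$ must then be a \emph{tiling} complement of $F$. But by the choice of $F$ from \cite{Greenfeld&Tao}, no periodic tiling complement of $F$ exists, a contradiction. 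Hence every attaining $D$-avoiding set is aperiodic, proving the proposition.

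The only real content here is the invocation of the Greenfeld--Tao theorem; Lemmas \ref{lem:optimatile} and \ref{lem:tpc-tc} do all the bridging between packings, tilings, and Motzkin densities. The main conceptual obstacle was identifying the right translation: turning ``aperiodic tiling'' into ``aperiodic maximizer of Motzkin density'', which is exactly what the pair of lemmas above accomplishes, once one notes that tight $F$-packings and $F$-tilings coincide in the periodic category.
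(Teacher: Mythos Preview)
Your proof is correct and follows essentially the same approach as the paper's: invoke the Greenfeld--Tao aperiodic tile $F$, set $D=(F-F)\setminus\{0\}$, use Lemma \ref{lem:optimatile} to compute $\md_{\mb{Z}^k}(D)=1/|F|$, and then apply Lemma \ref{lem:tpc-tc} to rule out periodic attainers. Your version is slightly more explicit than the paper's in verifying that the supremum is actually attained (by any tiling complement of $F$), which makes the statement ``attained only by'' non-vacuous.
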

\begin{proof}
First note that, as established in \cite[Corollary 1.6]{Greenfeld&Tao}, for sufficiently large $k$ there exists a tile $F\in \mc{T}_{\mb{Z}^k}$ such that no tiling complement of $F$ is periodic. Combining this with Lemma \ref{lem:tpc-tc}, we deduce that every tight packing complement of $F$ is not periodic. By Lemma \ref{lem:optimatile}, the set $D=(F-F)\setminus\{0\}$ satifies $\md_{\mb{Z}^k}(D)=1/|F|$, but this optimal density cannot be attained by any periodic $D$-avoiding set, since such a set would be a tight packing complement of $F$.
\end{proof}

\begin{remark}
Other results on periodic tilings can similarly imply answers to Question \ref{Q:periodicity} in various special cases. For instance, in the positive direction, let us consider cases in which the periodic tiling conjecture \emph{does} hold. If this conjecture holds\footnote{It is not yet known precisely for which values of $d$ this conjecture holds in $\mb{Z}^d$. The conjecture is known to hold for $d=1,2$ and known to fail for sufficiently large $d$ (see \cite{Greenfeld&Tao}).} in $\mb{Z}^d$, then for any set $D=(F-F)\setminus\{0\}$ for some $F\in\mc{T}_{\mb{Z}^d}$, the corresponding Question \ref{Q:periodicity} (via Proposition \ref{prop:equivZkrefined}) has a positive answer (and, by Lemma \ref{lem:optimatile}, we also have $\md_{\mb{Z}^d}(D)=1/|F|$).

Similarly, results in the topics of optimal packing and covering densities can have direct relevance in the topic of Motzkin densities, and some basic notions are naturally shared by these topics; see for instance \cite{Schmidt&Tuller2} (in particular a notion in the discrete setting that is similar to the lattice $\Lambda$ from \eqref{eq:Lambda}, called the \emph{relation lattice}); see also \cite{BoJaRi}. 
\end{remark}

\begin{remark}
Conversely, positive answers to Question \ref{Q:periodicity} in specific cases yield confirmations of corresponding cases of the periodic tiling conjecture. To detail this, given any finite tile $F\subset \mb{Z}^k$, let $D$ be formed by taking precisely one element of $\{t,-t\}$ for every $t\in (F-F)\setminus\{0\}$, and let $r=|D|$. From the proof of Proposition \ref{prop:equivZkrefined}, it follows that, for the lattice $\Lambda\subset \mb{Z}^r$ given by that proposition, a periodic subset of $\mb{Z}^r/\Lambda$ attaining $\md_{\mb{Z}^r/\Lambda}(B_\Lambda)$ yields (via the isomorphism $\varphi_B$), a periodic tiling complement for $F$. Hence a positive answer to Question \ref{Q:periodicity} for this $\Lambda$ and $r$ confirms the periodic tiling conjecture for $F$. In particular, if $D\subset \mb{Z}^k$ consists of $k$ or $k+1$ elements that generate $\mb{Z}^k$, then this correspondence combined with Theorem \ref{thm:rank1} confirms that $F$ has a periodic tiling complement. For $|D|\geq k+2$, our results in this paper do not help in this way, and this motivates further work on Question \ref{Q:periodicity}. For instance, by the above correspondence, if Question \ref{Q:periodicity} could be answered positively for every $r$ and $\Lambda\subset \mb{Z}^r$ of rank $r-k$ having a primitive basis, then the periodic tiling conjecture in $\mb{Z}^k$ would be confirmed. 
\end{remark}

\section{Motzkin densities in the case rank$(\Lambda)=1$}\label{sec:rank-1}

\noindent In this section we present the following solution to Problem \ref{prob:FGMotzkin} for lattices $\Lambda$ of rank 1, which implies Theorem \ref{thm:rk1-intro} (via Theorem \ref{thm:trans}).

\begin{theorem}\label{thm:rank1}
Let $\Lambda$ be a cyclic subgroup of $\mb{Z}^r$ generated by an element $m=(m_1,\ldots,m_r)\in \mb{Z}^r\setminus\{0\}$, and let $k=\|m\|_{\ell^1}$. Then
\begin{equation}\label{eq:rank1formula}
\md_{\mb{Z}^r/\Lambda}(B_\Lambda)= \frac{\lfloor k/ 2\rfloor}{k}.
\end{equation}
\end{theorem}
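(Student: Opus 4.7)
The plan is to prove the matching bounds $\md_{\mb{Z}^r/\Lambda}(B_\Lambda)=\lfloor k/2\rfloor/k$ using as central gadget a surjective group homomorphism $\psi\colon\mb{Z}^r/\Lambda\to\mb{Z}/k\mb{Z}$. Define $\psi$ first on $\mb{Z}^r$ by $e_i\mapsto\operatorname{sign}(m_i)$ when $m_i\neq 0$, and $e_i\mapsto 1$ when $m_i=0$ (this latter choice is immaterial in what follows). It descends to the quotient because $\psi(m)=\sum_i m_i\operatorname{sign}(m_i)=\sum_i|m_i|=k\equiv 0\pmod{k}$, and it is surjective since some $\psi(t_i)$ equals $\pm 1$, which generates $\mb{Z}/k\mb{Z}$.

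For the lower bound $\md_{\mb{Z}^r/\Lambda}(B_\Lambda)\geq\lfloor k/2\rfloor/k$, I would let $S=\{0,2,\dots,2\lfloor k/2\rfloor-2\}\subset\mb{Z}/k\mb{Z}$ be a maximum independent set in $\mr{Cay}(\mb{Z}/k\mb{Z},\{\pm 1\})$, of cardinality $\lfloor k/2\rfloor$ and whose difference set $S-S$ contains only even residues, and set $A_\ast:=\psi^{-1}(S)$. Since $\ker\psi$ has index $k$ in $\mb{Z}^r/\Lambda$, the set $A_\ast$ is a union of $\lfloor k/2\rfloor$ cosets of $\ker\psi$ out of $k$ in total, hence periodic of density exactly $\lfloor k/2\rfloor/k$ along any F\o{}lner sequence. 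It is $B_\Lambda$-avoiding: $A_\ast-A_\ast=\psi^{-1}(S-S)$ consists only of preimages of even residues, while $\psi(B_\Lambda)\subseteq\{\pm 1\}$, so $(A_\ast-A_\ast)\cap B_\Lambda=\emptyset$.

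For the upper bound, the idea is to cover the vertex set of the Cayley graph $\mr{Cay}(\mb{Z}^r/\Lambda,B_\Lambda\cup(-B_\Lambda))$ by $k$-cycles through each vertex, then double-count. Order the multiset consisting of $|m_i|$ copies of $\operatorname{sign}(m_i)e_i$ as $w_1,\dots,w_k$, grouped by index $i$, so that $\sum_{j=1}^k w_j=m\in\Lambda$. For $x\in\mb{Z}^r/\Lambda$, set $c_j(x):=x+w_1+\cdots+w_j$ and $C_x:=\{c_0(x),\dots,c_{k-1}(x)\}$. The key observation is that the $c_j(x)$ are pairwise distinct modulo $\Lambda$: a coincidence $c_i(x)=c_j(x)$ with $0\leq i<j\leq k-1$ would force the subsum $\sum_{l=i+1}^j w_l$ to equal $c\cdot m$ in $\mb{Z}^r$ for some $c\in\mb{Z}$, but its $\ell^1$-norm is $\leq k-1<\|m\|_{\ell^1}$, forcing $c=0$; and then the blockwise ordering makes any nonempty subsum of the $w_l$'s a strictly positive combination of distinct signed basis vectors, hence nonzero in $\mb{Z}^r$, a contradiction. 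Consequently each $C_x$ is a $k$-cycle in the Cayley graph, so any $B_\Lambda$-avoiding $A$ satisfies $|A\cap C_x|\leq\lfloor k/2\rfloor$, the maximum size of an independent set in a $k$-cycle. Since each $y\in\mb{Z}^r/\Lambda$ lies in exactly $k$ sets $C_x$ (one for each index $j$), double-counting over a F\o{}lner set $F_N$ gives $k\,|A\cap F_N'|\leq\lfloor k/2\rfloor\,|F_N|$, where $F_N':=\{y\in F_N:y-W\subseteq F_N\}$ with $W:=\{w_1+\cdots+w_j:0\leq j\leq k-1\}$. The F\o{}lner property, applied to the finite set $W$, yields $|F_N\setminus F_N'|=o(|F_N|)$, so $\phi_{B_\Lambda}(F_N)/|F_N|\leq\lfloor k/2\rfloor/k+o(1)$, and the upper bound follows from Definition \ref{def:FGMD1}.

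The main subtlety is the distinctness of the $c_j(x)$'s in the cycle construction: without it the alleged ``$k$-cycle'' would collapse to a shorter closed walk and the bound $\lfloor k/2\rfloor$ would not apply. This is the step where the $\ell^1$-norm of $m$ enters essentially, both as the length of the cycle and as the minimum $\ell^1$-norm of any nonzero element of $\Lambda$. The remaining ingredients---the density computation for a periodic set, the F\o{}lner boundary estimate, and the independence number of the $k$-cycle---are routine.
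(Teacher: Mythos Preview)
Your proof is correct. Both your argument and the paper's rest on the same structural observation---that the signed coordinate sum induces a $k$-cycle structure modulo $\Lambda$---but the technical executions differ. For the lower bound, the paper first reduces (Remark~\ref{rem:reducs}) to the case where all $m_i>0$ and $k$ is odd, then builds an explicit $B_\Lambda$-avoiding set inside a fundamental domain $W_S$ using the parity of $u\cdot x$ and checks the avoiding property by a hands-on case analysis; your homomorphism $\psi$ packages this same idea more algebraically and handles arbitrary signs (and zeros) among the $m_i$ without preliminary reductions. For the upper bound, the paper constructs a complementary lattice $S$, shows that the tiling set $C_S$ induces a $k$-cycle in the Cayley graph, and then invokes the infimum formula~\eqref{eq:mdinf} from Lemma~\ref{lem:estiminf}; you instead translate that same cycle over the whole group and double-count, which is a direct averaging argument that bypasses the tiling and infimum machinery entirely. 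Your route is shorter and more self-contained, while the paper's has the side benefit of exhibiting an explicit minimizer for the infimum in~\eqref{eq:mdinf}, thereby answering Question~\ref{Q:min} affirmatively in the rank-$1$ case.
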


The case of this theorem for $r=2$ was obtained in \cite[Theorem 3.2]{CCRS}, and includes as a special case (via Proposition \ref{prop:equivZkrefined}) the classical Cantor--Gordon result \cite[Theorem 4]{C&G}.

Given the lattice $\Lambda\subset \mb{Z}^r$, of rank $d$, let us say that a lattice $S\subset \mb{Z}^r$ is \emph{complementary to} $\Lambda$ if the linear span $\Span_{\mb{Q}}(S)$ is a complement of $\Span_{\mb{Q}}(\Lambda)$ (i.e.\ $\Span_{\mb{Q}}(S)\oplus \Span_{\mb{Q}}(\Lambda)=\mb{Q}^r$), equivalently rank$(S)=r-d$ and $S\cap \Lambda=\{0\}$. For any such lattice $S$, let $\mc{P}_S$ denote\footnote{We include only $S$ as a subscript in ``$\mc{P}_S$", not $\Lambda$, because we think of $\Lambda$ as fixed from the start, whereas we may change the set $S$ to find the most convenient one for our purposes.} the fundamental parallelepiped of the rank $r$ lattice $\Lambda\oplus S$, and let us define
\begin{equation}\label{eq:FDC}
C_S:= \mc{P}_S\cap \mb{Z}^r.
\end{equation}
It is well known that the cardinality $|C_S|$ is the number of cosets of $S\oplus \Lambda$ in $\mb{Z}^r$. Moreover $C_S$ tiles $\mb{Z}^r$:
\begin{equation}\label{eq:gen-tile}
\mb{Z}^r = \bigsqcup_{w\in S\oplus\Lambda} (w+C_S).
\end{equation}
\begin{lemma}\label{lem:Wtile}
The following set is a fundamental domain for the action of $\Lambda$ on $\mb{Z}^r$:
\begin{equation}\label{eq:Wtile}
W_S:= \bigsqcup_{w\in S} (w+C_S).
\end{equation}
In particular, the set $C_S$ tiles $\mb{Z}^r/\Lambda$ \textup{(}viewing $\mb{Z}^r/\Lambda$ as $W_S$ with addition modulo $\Lambda$\textup{)}.
\end{lemma}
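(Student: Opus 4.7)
The strategy is to reduce both claims to the tiling identity \eqref{eq:gen-tile} by splitting an element of $S\oplus\Lambda$ into its $S$-part and its $\Lambda$-part. The only non-routine point to verify along the way is that the complementarity hypothesis (which a priori asserts a decomposition in $\mb{Q}^r$) yields an \emph{internal} direct sum decomposition $S\oplus\Lambda$ inside $\mb{Z}^r$; this is a one-line observation, since for $w\in S\oplus \Lambda\subset\mb{Z}^r$ the identity $\Span_{\mb{Q}}(S)\oplus\Span_{\mb{Q}}(\Lambda)=\mb{Q}^r$ provides a unique rational decomposition $w=s+\lambda$, and by construction each summand lies in $S$ or $\Lambda$, respectively, hence in $\mb{Z}^r$.

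\emph{Step 1 (Existence of the decomposition).} Fix $x\in \mb{Z}^r$. By \eqref{eq:gen-tile} there exist unique $w\in S\oplus\Lambda$ and $c\in C_S$ with $x=w+c$. Using the observation above, write $w=s+\lambda$ uniquely with $s\in S$, $\lambda\in\Lambda$. Then $x=\lambda+(s+c)$ with $s+c\in W_S$, so $\mb{Z}^r=\bigcup_{\lambda\in\Lambda}(\lambda+W_S)$.

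\emph{Step 2 (Disjointness of $\Lambda$-translates of $W_S$).} Suppose $\lambda_1+s_1+c_1=\lambda_2+s_2+c_2$ for some $\lambda_i\in\Lambda$, $s_i\in S$, $c_i\in C_S$. Regrouping as $(\lambda_1+s_1)+c_1=(\lambda_2+s_2)+c_2$ and invoking the uniqueness part of \eqref{eq:gen-tile} gives $c_1=c_2$ and $\lambda_1+s_1=\lambda_2+s_2$. The uniqueness of the $S$-$\Lambda$ decomposition then forces $\lambda_1=\lambda_2$ and $s_1=s_2$. This completes the proof that $W_S$ is a fundamental domain for $\Lambda$ acting on $\mb{Z}^r$; in passing, the case $\lambda_1=\lambda_2=0$ also shows that the union $\bigsqcup_{w\in S}(w+C_S)$ defining $W_S$ is indeed disjoint.

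\emph{Step 3 (The ``in particular'' statement).} Identifying $\mb{Z}^r/\Lambda$ with $W_S$ via the bijection given by Step 1--2, the tile--complement structure $W_S=\bigsqcup_{s\in S}(s+C_S)$ becomes a tiling of $\mb{Z}^r/\Lambda$ by translates of $C_S$ with tiling complement the image of $S$ in $\mb{Z}^r/\Lambda$ (which is a subgroup since $S\cap\Lambda=\{0\}$).

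\emph{Main obstacle.} There is no substantive obstacle: the proof is bookkeeping around \eqref{eq:gen-tile}. The only point that requires a moment of care is checking that the decomposition $w=s+\lambda$ stays inside the integer lattice rather than merely inside $\mb{Q}^r$; this is immediate once one observes that $S\oplus\Lambda$ denotes the internal direct sum in $\mb{Z}^r$, guaranteed by the complementarity hypothesis $S\cap\Lambda=\{0\}$ together with the rank condition.
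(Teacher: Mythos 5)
Your proof is correct and follows essentially the same route as the paper: both proofs reduce the claim to the uniqueness portion of the tiling \eqref{eq:gen-tile}, the only (cosmetic) difference being that the paper invokes directly the uniqueness of the triple $(s,\lambda,c)\in S\times\Lambda\times C_S$ with $x=s+\lambda+c$, whereas you split this into the uniqueness of $(w,c)$ with $w\in S\oplus\Lambda$ followed by the uniqueness of the internal decomposition $w=s+\lambda$ (which holds simply because $S\cap\Lambda=\{0\}$). Your preliminary observation that the decomposition stays inside $\mb{Z}^r$ is correct but not really needed beyond noting $S\cap\Lambda=\{0\}$, and your Step 3 matches the paper's ``in particular'' assertion.
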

\begin{proof}
Given any $x\in \mb{Z}^r$, since $C_S$ is a fundamental domain for the action of $S\oplus \Lambda$ on $\mb{Z}^r$, there is a unique triple $(s,\lambda,c)\in S\times \Lambda\times C_S$ such that $x=s+\lambda+c$, implying that $x$ has the representative $s+c\in W_S$ modulo $\Lambda$. Moreover, this representative is unique, otherwise there would be $s'\in S$ and $c'\in C_S$ such that $s'+c'\neq s+c$ and, for some $\lambda'\in \Lambda$, we have $x=s'+\lambda'+c'$. Thus 
$(s,\lambda,c)$ and $(s',\lambda',c')$ would be two different triples in $S\times \Lambda\times C_S$ with sum equal to $x$, a contradiction.
\end{proof}
Given a finite graph $H$, recall that the \emph{independence number} of $H$, denoted by $\alpha(H)$, is the maximum of the cardinalities of sets of vertices of $H$ that are \emph{independent} (i.e.\ such that no pair of vertices in these sets form an edge of $H$). Another way to phrase Problem \ref{prob:FGMotzkin} is as the estimation of the supremum of upper densities of independent sets in the Cayley graph\footnote{Two vertices $x+\Lambda,y+\Lambda$ in $\mb{Z}^r/\Lambda$ are adjacent in this graph $G$ if and only if there is $j\in [r]$ and $\lambda\in \Lambda$ such that one of the two elements $x-y,y-x\in \mb{Z}^r$ equals $e_j+\lambda$.}
\begin{equation}\label{eq:CG}
G:=\textrm{Cay}(\mb{Z}^r/\Lambda, B_\Lambda).
\end{equation}
For the subadditive function $\phi_{B_\Lambda}$ (recall  \eqref{eq:subadfn}), the quantity $\phi_{B_\Lambda}(C_S)$ is the independence number of the subgraph $G[C_S]$ of $G$ induced by the vertex subset $C_S$. We define the following two subsets $V_0,V_1\subset \mb{Z}^r$:
\begin{equation}\label{eq:2classes}
V_i=\{x\in \mb{Z}^r: x_1+\cdots+x_r \equiv i\!\!\mod 2\},\; i\in \{0,1\}.
\end{equation}
Note that these sets are independent in the graph $G'=\textrm{Cay}(\mb{Z}^r,\{e_1,\ldots,e_r\})$, and therefore so are the sets $V_0\cap W_S$, $V_1\cap W_S$. 

To prove Theorem \ref{thm:rank1}, we answer Question \ref{Q:periodicity} positively in this rank 1 case with an explicit construction. For technical convenience we use the following reductions.
\begin{remark}\label{rem:reducs}
It suffices to prove the special case of Theorem \ref{thm:rank1} where each $m_i$ is non-zero. Indeed, in the general case, we may assume without loss of generality that $m_1,\ldots,m_j$ are non-zero and the rest are 0. Then, by the above special case applied in $\mb{Z}^j$, we would obtain an $E_j$-avoiding set $A_j\subset \mb{Z}^j/(\Lambda\cap \mb{Z}^j)$ of density $\frac{\lfloor k/ 2\rfloor}{k}$, where $E_j=\{\overline{e_1},\ldots,\overline{e_j}\}$. Taking appropriately-shifted copies of $A_j$ along the $(j+1)$-st dimension, we would then obtain an $E_{j+1}$-avoiding subset of $\mb{Z}^{j+1}/(\Lambda\cap \mb{Z}^{j+1})$ still of density $\frac{\lfloor k/ 2\rfloor}{k}$. Continuing this way adding dimensions, we would end up with a $B_\Lambda$-avoiding subset of $\mb{Z}^r/\Lambda$ as required. 

Note also that it suffices to prove \eqref{eq:rank1formula} under the additional assumption that $k$ is odd. Indeed, if $k$ is even then the graph $G$ in \eqref{eq:CG} can be partitioned into the two classes of vertices $V_i\cap W_S$, $i=0,1$ (for $V_i$ in \eqref{eq:2classes}). Any of these two classes yields an $E$-avoiding set of density $1/2$, which proves \eqref{eq:rank1formula} in the even case.

Finally, we can assume that each $m_i$ is positive, because the sign change amounts to replacing $\Lambda$ by an isomorphic lattice, and replacing the missing difference $e_i+\Lambda$ by $-e_i+\Lambda$, which does not alter the Motzkin density.
\end{remark}
Thus we start with a generator $m=(m_1,\ldots,m_r)\in \mb{Z}_{>0}^r$ of $\Lambda$, and choose the following lattice $S$ complementary to $\Lambda$:
\begin{eqnarray}\label{eq:rank1S}
&\textrm{For each $i\in [r-1]$ we define $s^{(i)}:=e_{i+1}-e_i$,}&\nonumber \\ 
&\textrm{and we define $S:=\mb{Z}s^{(1)}+\cdots+\mb{Z}s^{(r-1)}$.}&
\end{eqnarray} 

\begin{lemma}\label{lem:tile}
The lattice $S$ in \eqref{eq:rank1S} is complementary to $\Lambda$ and the fundamental domain $C_S$ of $\Lambda\oplus S$ has cardinality $k=\|m\|_{\ell^1}$.
\end{lemma}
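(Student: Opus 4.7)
The plan is to split the lemma into two parts: verifying that $S$ is complementary to $\Lambda$, and then computing $|C_S|$ as the index $[\mb{Z}^r:\Lambda\oplus S]$.

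For complementarity, the key observation is that each generator $s^{(i)}=e_{i+1}-e_i$ has coordinate sum $0$, so $\Span_{\mb{R}}(S)\subseteq H:=\{x\in \mb{R}^r:x_1+\cdots+x_r=0\}$. Since the $s^{(i)}$ are clearly linearly independent (for instance because $s^{(i)}$ has its last nonzero coordinate in position $i+1$), the reverse inclusion follows from dimension count: $\Span_{\mb{R}}(S)=H$. On the other hand, by the reduction in Remark \ref{rem:reducs} we may assume $m_1,\ldots,m_r>0$, whence $m$ has coordinate sum $k>0$ and therefore $m\notin H$. Hence $\Span_{\mb{Q}}(\Lambda)\oplus \Span_{\mb{Q}}(S)=\mb{Q}^r$ and $S$ is complementary to $\Lambda$.

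For the cardinality, I will compute $[\mb{Z}^r:\Lambda\oplus S]$ by replacing $m$ in a basis of $\Lambda\oplus S$ with a vector that puts the basis matrix in a convenient triangular form. Using the identity $e_j=e_1+s^{(1)}+\cdots+s^{(j-1)}$ for $2\leq j\leq r$, one readily computes
\[
m \;=\; k\,e_1 \;+\; \sum_{i=1}^{r-1}\Big(\sum_{j=i+1}^{r} m_j\Big)\,s^{(i)}.
\]
This shows $m\in \mb{Z}(ke_1)+S$ and $k\,e_1\in \Lambda+S$; together with $S\subset \Lambda\oplus S$, it follows that $\Lambda\oplus S=\mb{Z}(k e_1)+S$ inside $\mb{Z}^r$. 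Since $ke_1\notin \Span_{\mb{R}}(S)=H$, the latter sum is direct.

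The basis $\{ke_1,s^{(1)},\ldots,s^{(r-1)}\}$ of $\Lambda\oplus S$ yields the matrix
\[
\begin{pmatrix}
k & 0 & 0 & \cdots & 0\\
-1 & 1 & 0 & \cdots & 0\\
0 & -1 & 1 & \cdots & 0\\
\vdots & & \ddots & \ddots & \vdots\\
0 & \cdots & 0 & -1 & 1
\end{pmatrix},
\]
which is lower-triangular with diagonal entries $k,1,\ldots,1$, hence has determinant $\pm k$. Therefore $|C_S|=[\mb{Z}^r:\Lambda\oplus S]=k=\|m\|_{\ell^1}$. There is no real obstacle here beyond spotting the change of basis replacing $m$ by $ke_1$; the remainder is a direct linear-algebra calculation.
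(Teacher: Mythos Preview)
Your proof is correct and rests on the same key identity as the paper's: writing $m$ as $k$ times a standard basis vector modulo $S$ (you use $e_1$, the paper uses $e_r$ via the partial-summation formula $n=(u\cdot n)e_r-v$ with $v\in S$). The only difference is in how the index is then extracted: the paper carries out division with remainder to exhibit the coset representatives $\{ae_r:0\le a<k\}$ explicitly (thereby identifying $\mb{Z}^r/(\Lambda\oplus S)\cong\mb{Z}_k$), whereas you pass to the basis $\{ke_1,s^{(1)},\dots,s^{(r-1)}\}$ and read off the determinant. Both are equally short; the paper's version has the minor advantage of naming the coset representatives, which is used implicitly in the subsequent lemmas.
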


\begin{proof}
By partial summation, any $n\in \mb{Z}^r$ satisfies $n=(u\cdot n)e_r-v$ with $u=(1,\ldots,1)$ and $v=\sum_{i=1}^{r-1} (n_1+\cdots + n_i)s^{(i)}\in S$. In particular we have $m=w+\|m\|_{\ell^1}e_r$ with $w\in S$. Let 
 $q\in \mb{Z}$ and $a\in [0,\|m\|_{\ell^1}-1]$ be respectively the quotient and the remainder when we divide $u\cdot n$ by $\|m\|_{\ell^1}$. It follows that $n=q m - (v+qw)+a e_r\in\Lambda+S+a e_r$. As $w$ is fixed and $q$ and $a$ are uniquely determined, we get at once that $\Lambda,S$ are complementary and also the claim about $C_S$, in fact we deduce that $\mb{Z}^r/(\Lambda\oplus S)\cong  \mb{Z}_{\|m\|_{\ell^1}}$.
\end{proof}
To prove Theorem \ref{thm:rank1}, we shall split the argument into the following two lemmas.
\begin{lemma}\label{lem:rank1ub}
We have $\md_{\mb{Z}^r/\Lambda}(B_\Lambda)\leq \frac{\lfloor k/2\rfloor}{k}$.
\end{lemma}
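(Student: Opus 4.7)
The plan is to apply Lemma \ref{lem:estiminf} to a carefully chosen finite tile $T \in \mc{T}_{\mb{Z}^r/\Lambda}$ of cardinality $k$. Specifically, I will produce a $T$ for which the induced subgraph $G[T]$ of the Cayley graph $G = \textrm{Cay}(\mb{Z}^r/\Lambda, B_\Lambda)$ contains a Hamiltonian cycle. Since a cycle of length $k$ has independence number $\lfloor k/2\rfloor$, any such $T$ will satisfy $\phi_{B_\Lambda}(T) \leq \lfloor k/2\rfloor$, and Lemma \ref{lem:estiminf} will then deliver $\md_{\mb{Z}^r/\Lambda}(B_\Lambda) \leq \phi_{B_\Lambda}(T)/|T| \leq \lfloor k/2\rfloor / k$.

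By Remark \ref{rem:reducs}, I may assume all $m_i > 0$. I then take $T$ to be the image modulo $\Lambda$ of the $k$ points $v_0 = 0, v_1, \ldots, v_{k-1}$ visited by the walk in $\mb{Z}^r$ that starts at the origin, takes $m_1$ consecutive $+e_1$-steps, then $m_2$ consecutive $+e_2$-steps, and so on, concluding with $m_r$ steps in direction $+e_r$. Since the walk terminates at $v_k = m \in \Lambda$, its projection to $\mb{Z}^r/\Lambda$ closes into a cycle of length $k$, provided the $v_\ell$ are pairwise distinct modulo $\Lambda$. To verify this, let $u = (1, \ldots, 1)$; then by construction $u \cdot v_\ell = \ell$, so any collision $v_\ell \equiv v_{\ell'} \pmod \Lambda$ with $\ell < \ell'$ would yield $v_{\ell'} - v_\ell = qm$ for some integer $q$, whence $\ell' - \ell = qk$, which is impossible in the open interval $(0, k)$. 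The same invariant $u \cdot v_\ell = \ell$, together with the isomorphism $\mb{Z}^r/(\Lambda \oplus S) \cong \mb{Z}_k$ obtained in the proof of Lemma \ref{lem:tile} (for $S$ as in \eqref{eq:rank1S}), shows that the $v_\ell$ hit every coset of $\Lambda \oplus S$ exactly once; hence $T$ forms a complete set of representatives for $S$ in $\mb{Z}^r/\Lambda$, and therefore tiles $\mb{Z}^r/\Lambda$ with complement $S$, so $T \in \mc{T}_{\mb{Z}^r/\Lambda}$.

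The hard part is precisely this distinctness-plus-tiling verification. The positivity of all $m_i$, secured by Remark \ref{rem:reducs}, is essential: it guarantees that the index of $\Lambda \oplus S$ in $\mb{Z}^r$ equals $\|m\|_{\ell^1} = k$, which is what makes the $k$ walk vertices realize every $\Lambda \oplus S$-coset exactly once. Once this step is in place, the remaining ingredients — the cycle structure of $G[T]$, the cycle's independence-number bound, and the application of Lemma \ref{lem:estiminf} — are immediate, completing the proof of the upper bound.
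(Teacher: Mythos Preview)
Your proof is correct and follows essentially the same strategy as the paper's: exhibit a tile of cardinality $k$ in $\mb{Z}^r/\Lambda$ on which the Cayley graph carries a spanning $k$-cycle, and then invoke Lemma~\ref{lem:estiminf} together with $\alpha(C_k)=\lfloor k/2\rfloor$. The only difference is the order of verification: the paper takes the canonical fundamental domain $C_S$ of $\Lambda\oplus S$ (so the tiling property is immediate) and then works, via the same invariant $x\mapsto u\cdot x$, to show that $G[C_S]$ is actually isomorphic to the $k$-cycle; you instead build the tile as the trace of an explicit lattice walk (so the Hamiltonian cycle is immediate by construction) and then use $u\cdot v_\ell=\ell$ and Lemma~\ref{lem:tile} to check that these $k$ points represent all cosets of $\Lambda\oplus S$, hence tile $\mb{Z}^r/\Lambda$. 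Your variant is marginally more economical, since you only need $G[T]$ to \emph{contain} a spanning cycle rather than to \emph{be} one.
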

\begin{proof}
By Remark \ref{rem:reducs}, we can assume that $k$ is odd. By Lemma \ref{lem:tile} and \eqref{eq:mdinf} we have $\md_{\mb{Z}^r/\Lambda}(B_\Lambda)\leq \frac{\alpha(G[C_S])}{|C_S|}=\frac{\alpha(G[C_S])}{k}$. We shall prove that $\alpha(G[C_S])= \lfloor k/2\rfloor$ by showing that the graph $G[C_S]$ is isomorphic to a cycle of odd length $k$ (the result will then follow by the standard fact that the independence number of an odd cycle of length $k$ is $\lfloor k/2\rfloor$). 

Let $u=(1,\ldots,1)\in\mb{Z}^r$, and let $f:\mb{Z}^r\to\mb{Z}$,  $x\mapsto u\cdot x$. Let
\[
\wt{C}_S:=\Big\{x\in\mb{Z}^r:x=\sum_{j=1}^{r-1}\lambda_j s^{(j)}+\lambda_r m,\;\lambda_1,\ldots,\lambda_{r-1}\in [0,1), \lambda_r\in \mb{R}\Big\}.
\]
For every $x\in \wt{C}_S$, since $u$ is orthogonal to every $s^{(i)}$, we have $f(x)=\lambda_r u\cdot m=\lambda_r k$, and since this is an integer we must have $\lambda_r=j/k$ for some $j\in\mb{Z}$. Note that $f$ is bijective $\wt{C}_S\to \mb{Z}$, since given any $j\in\mb{Z}$, its unique preimage $x\in \wt{C}_S$ under $f$ is the solution to
\[
(\lambda_1+\tfrac{j}{k}m_1,\; \lambda_2-\lambda_1+\tfrac{j}{k}m_2,\ldots,\; \lambda_{r-1}-\lambda_{r-2}+\tfrac{j}{k}m_{r-1},\;\tfrac{j}{k}m_r-\lambda_{r-1})\in \mb{Z}^r.
\]
Indeed, there are unique values of $\lambda_1\ldots,\lambda_{r-1}\in [0,1)$ such that the first $r-1$ entries are integers $n_1,\ldots,n_{r-1}$, and once these values have been fixed, the last entry turns out to be automatically in $\mb{Z}$, since we have
\begin{eqnarray*}
\tfrac{j}{k}m_r-\lambda_{r-1} & = & \tfrac{j}{k}m_r +\tfrac{j}{k}m_{r-1} -n_{r-1}-\lambda_{r-2} \\
& = & \tfrac{j}{k}m_r +\tfrac{j}{k}m_{r-1} + \tfrac{j}{k}m_{r-2} -n_{r-1}-n_{r-2}-\lambda_{r-3}\\
& = & \cdots \; = \; \tfrac{j}{k} u\cdot m - n_{r-1}-n_{r-2}-\cdots -n_1 \in \mb{Z}.
\end{eqnarray*}
Having proved that $f$ is a bijection, let us now note that if $x,y\in \wt{C}_S$ satisfy $f(y)=f(x)+1$ then $y=x+e_i$ for some $i\in [r]$. Indeed $x=\sum_{j=1}^{r-1}\lambda_j s^{(j)} + \lambda_r m$ and $y=\sum_{j=1}^{r-1}\lambda_j' s^{(j)}+ \lambda'_r m$, where $\lambda_r=j/k$ and $\lambda_r'=(j+1)/k$, so $y-x$ is an element of $\mb{Z}^r$ of the form $\beta_1 s^{(1)}+\cdots+\beta_{r-1} s^{(r-1)}+\frac{1}{k} m$ with $\beta_i=\lambda_i'-\lambda_i\in (-1,1)$. Since $\frac{1}{k} m$ is an element in the unit ball of the $\ell^1$-norm with all coordinates positive, it follows that $y-x$ must be an element $e_i$ of the standard basis. 

We have thus proved that $\textrm{Cay}(\wt{C}_S,\{e_i\}_{i\in [r]})$ is isomorphic to $\textrm{Cay}(\mb{Z},\{\pm 1\})$, and it is then clear that $G[C_S]$ is isomorphic to the $k$-cycle.
\end{proof}

\begin{lemma}\label{lem:rank1lb}
We have $\md_{\mb{Z}^r/\Lambda}(B_\Lambda)\geq \frac{\lfloor k/2\rfloor}{k}$.
\end{lemma}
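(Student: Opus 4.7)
The plan is to construct an explicit \emph{periodic} $B_\Lambda$-avoiding subset of $\mb{Z}^r/\Lambda$ of density exactly $\lfloor k/2\rfloor/k$; this will simultaneously give the desired inequality and a positive answer to Question \ref{Q:periodicity} (and hence also to Questions \ref{Q:rationality} and \ref{Q:min}) in the rank-$1$ case. After invoking Remark \ref{rem:reducs} to reduce to the case in which every coordinate $m_i$ is strictly positive, the key observation is that the linear functional
\[
f:\mb{Z}^r\to\mb{Z},\qquad x\mapsto x_1+\cdots+x_r,
\]
already used in the proof of Lemma \ref{lem:rank1ub}, satisfies $f(e_i)=1$ for every $i\in[r]$ and $f(m)=\|m\|_{\ell^1}=k$. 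Hence $f(\Lambda)=k\mb{Z}$, so $f$ descends to a surjective group homomorphism $\bar f:\mb{Z}^r/\Lambda\to\mb{Z}/k\mb{Z}$ that sends each $e_i+\Lambda$ to $1$.

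With $\bar f$ in hand, the construction pulls back an optimal independent set on the cycle. Choose $T\subset\mb{Z}/k\mb{Z}$ with $|T|=\lfloor k/2\rfloor$ and $1\notin T-T$, concretely $T=\{0,2,\ldots,2\lfloor k/2\rfloor-2\}$, which is a maximum independent set of the cycle graph $\mr{Cay}(\mb{Z}/k\mb{Z},\{\pm1\})$, and define
\[
A:=\bar f^{-1}(T)\subset \mb{Z}^r/\Lambda.
\]
If two elements $a,a'\in A$ satisfied $a-a'=e_i+\Lambda$ for some $i$, then applying $\bar f$ would give $\bar f(a)-\bar f(a')=1$ in $\mb{Z}/k\mb{Z}$, contradicting $1\notin T-T$; thus $A$ is $B_\Lambda$-avoiding. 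Since $A$ is the union of $|T|$ cosets of the finite-index subgroup $\ker\bar f$ of $\mb{Z}^r/\Lambda$, it is periodic, and along any F\o lner sequence in $\mb{Z}^r/\Lambda$ its density is $|T|/k=\lfloor k/2\rfloor/k$, which yields the required lower bound.

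I do not anticipate any significant obstacle here: Lemma \ref{lem:rank1ub} has already revealed the governing structure, showing that $G[C_S]$ is the $k$-cycle and that its identification with $\mb{Z}/k\mb{Z}$ is effected precisely by $f$. The lower bound is thus obtained by lifting a maximum independent set from $\mb{Z}/k\mb{Z}$ back to $\mb{Z}^r/\Lambda$ via the quotient homomorphism $\bar f$, i.e.\ inverting the cycle identification that produced the matching upper bound.
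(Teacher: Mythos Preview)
Your proof is correct and builds the same set as the paper: after the reduction to $m_i>0$ and $k$ odd, the paper defines $A=\{v\in W_S: u\cdot v\equiv 0\bmod 2,\ u\cdot v\neq k-1\}$, which is precisely your $\bar f^{-1}(\{0,2,\ldots,k-3\})=\bar f^{-1}(T)$. The difference is packaging: the paper works inside the explicit fundamental domain $W_S$ and verifies the $B_\Lambda$-avoiding property by hand (bounding $|u\cdot(y-x)|$ to force $t=0$, then invoking the parity classes $V_0,V_1$), whereas you observe that $f$ descends to a surjective homomorphism $\bar f:\mb{Z}^r/\Lambda\to\mb{Z}/k\mb{Z}$ sending every $e_i+\Lambda$ to $1$, which makes the avoidance property and the density computation immediate. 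Your formulation is cleaner and also dispenses with the separate treatment of the even-$k$ case in Remark~\ref{rem:reducs}, since your choice of $T$ works uniformly.
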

To prove this we use the fundamental domain $W_S$ from \eqref{eq:Wtile}, noting that
\[
W_S=\{v\in \mb{Z}^r:u\cdot v\in [0,k-1]\}.
\]
Indeed this can be seen by noting that $W_S=H\cap \mathbb{Z}^r$ where $H$ is the slice of $\mb{R}^r$ between the hyperplanes $\Span_{\mb{R}}(S)=u^\perp$ and $u^\perp+m$, i.e.\ $H= \{v\in \mb{R}^r: 0\leq u\cdot v< k=\|m\|_{\ell^1}\}$.
\begin{proof}[Proof of Lemma \ref{lem:rank1lb}]
The idea is to form a $B_\Lambda$-avoiding set $A$ of density $\frac{\lfloor k/2\rfloor}{k}$ consisting of points in $W_S$ whose sum of coordinates is even. However, there are technicalities, as not all such points in $W_S$ can be included. More precisely, let
\[
A:= \big\{v\in W_S: u\cdot v\equiv 0\!\!\mod 2\textrm{ and } u\cdot v\neq k-1\big\}.
\] 
Let us first prove that $A$ is a $B_\Lambda$-avoiding subset of $\mb{Z}^r/\Lambda$. For this purpose let
\begin{equation}\label{eq:P-rank1}
P:=\{c\in C_S: u\cdot c\equiv 0\!\!\mod 2\textrm{ and }c\cdot u\neq k-1\}.
\end{equation}
Note that (using \eqref{eq:Wtile}) we have $A=\big(\bigsqcup_{s\in S} P+s\big)\!\mod \Lambda$, and that
\begin{equation}\label{eq:even}
\textrm{the difference set $P-P$ in $\mb{Z}^r$, and the set $S$, are both subsets of $V_0$.}
\end{equation} 
Suppose now for a contradiction that two points $x,y\in A$ differ by some element of $B_\Lambda$, say $y=x+\overline{e_j}$, where $j\in [r]$. Thus $x$ and $y$ lie respectively in translates $s_x+P$ and $s_y+P$ with $s_x,s_y\in S$. By our assumption we then have $y=x+ e_j + tm$ (with addition in $\mb{Z}^r$) for some $t\in \mb{Z}$. We must then have $t=0$. Indeed, on one hand we have $|u\cdot (y-x)|= |u\cdot e_j + t\|m\|_{\ell^1}| = |1+ tk|\geq |t|\, k-1$, and on the other hand, since $x=s_x+p_x$ and $y=s_y+p_y$ for some $p_x,p_y\in P$, we have $|u\cdot (y-x)|= |u\cdot (p_y-p_x)|$, which is an even integer at most $2(\lfloor k/2\rfloor -1)=k-3$ (using the assumption that $k$ is odd). We thus deduce that $|t|\, k-1\leq k-3$, which is possible only if $t=0$ as claimed. We therefore have $y=x+ e_j$, with addition in $\mb{Z}^r$. But this implies that the element $s_y+p_y-s_x-p_x\in \mb{Z}^r$ equals $e_j$, which is impossible because this is an element of the set $S+P-P$, included in $V_0$ (by \eqref{eq:even}), so this element cannot equal $e_j\in V_1$. Hence $A$ is indeed $B_\Lambda$-avoiding. 

It only remains to prove that $A$ has density $\frac{\lfloor k/2\rfloor}{k}$. This is done  straightforwardly using that $A$ is a periodic set, tiled by the finite set $P$ (with period set $S$), so this density is equal to the density of $P$ inside $C_S$, namely $\frac{\lfloor k/2\rfloor}{k}$.
\end{proof}
\noindent Combining Lemma \ref{lem:rank1lb} and Lemma \ref{lem:rank1ub} we immediately deduce Theorem \ref{thm:rank1}.

\medskip

The sets $V_0,V_1$ from \eqref{eq:2classes} are independent sets in $G'=\textrm{Cay}(\mb{Z}^r,\{e_1,\ldots,e_r\})$. The sets $V_0\cap W_S,V_1\cap W_S$ may not be independent in the richer graph $G$ from \eqref{eq:CG}, in which edges modulo $\Lambda$ also appear. However, it seems natural to consider whether we can obtain an optimal $B_\Lambda$-avoiding set by eliminating adequately just one vertex from every such problematic edge modulo $\Lambda$. 
\begin{question}[Greedy construction]\label{Q:explicit}
Can a $B_\Lambda$-avoiding subset of $\mb{Z}^r/\Lambda$ of optimal upper density $\md_{\mb{Z}^r/\Lambda}(B_\Lambda)$ always be obtained by taking one of the two sets $V_0\cap W_S$, $V_1\cap W_S$, and removing from this set one vertex from each edge in $G$? 
\end{question}

\section{The case rank$(\Lambda)=r-1$}\label{sec:rank-r-1}

\noindent For any finite set $D\subset \mb{Z}$, Cantor and Gordon proved in \cite[Theorem 5]{C&G} that the Motzkin density $\md_{\mb{Z}}(D)$ is always attained by a \emph{periodic} subset of $\mb{Z}$ whose period is bounded solely in terms of $D$. In this section we extend this result to the case rank$(\Lambda)=r-1$ of Problem \ref{prob:FGMotzkin}. This is indeed an extension in the sense that, by Proposition \ref{prop:equivZkrefined}, the original Motzkin problem in $\mb{Z}$ is strictly subsumed in the case rank$(\Lambda)=r-1$ of Problem \ref{prob:FGMotzkin}.

In this section it will be convenient to use the Fundamental Theorem of finitely generated abelian groups (see Remark \ref{rem:NatFolner}), which yields an isomorphism $\varphi$ from our initial group $\mb{Z}^r/\Lambda$ (with rank$(\Lambda)=r-1$) to $G\times \mb{Z}$ for some finite abelian group $G$, so that the original Motzkin density $\md_{\mb{Z}^r/\Lambda}(B_\Lambda)$ in this case equals $\md_{G\times \mb{Z}}(E)$ for $E=\varphi (B_\Lambda)$.

A set $A\subset G\times \mb{Z}$ is \emph{periodic} if there exists a positive integer $t$ such that $A+(0_G,t)=A$; we then call $t$ a \emph{period} of $A$. The main result in this section is then the following. 
\begin{theorem}\label{thm:corank1}
Let $G$ be a finite abelian group, let $\Gamma=G\times \mb{Z}$, and let $E$ be a finite subset of $\Gamma$. Then there exists a periodic $E$-avoiding subset of $\Gamma$ of optimal density $\md_\Gamma(E)$.
\end{theorem}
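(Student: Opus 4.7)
The plan is to reduce the problem to a combinatorial optimization on a finite weighted directed graph, exploiting two features: $G$ is finite, so the alphabet $\Sigma := \mc{P}(G)$ is finite; and the $E$-avoiding property is a sliding-window condition along the $\mb{Z}$-direction. I encode each subset $A \subset \Gamma$ as a sequence $\omega(A) = (\omega_n)_{n \in \mb{Z}} \in \Sigma^{\mb{Z}}$ by $\omega_n := \{g \in G : (g, n) \in A\}$, and let $M := \max\{|n| : (g, n) \in E\}$. Since $(A - A) \cap E = \emptyset$ unfolds as: for every $(g, m) \in E$ and every $n \in \mb{Z}$, no $h \in G$ satisfies both $h \in \omega_n$ and $h + g \in \omega_{n+m}$, the condition only involves entries at distance $\leq M$. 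Hence the image $X_E \subset \Sigma^{\mb{Z}}$ of all $E$-avoiding sets is a subshift of finite type with window size $M + 1$. (The degenerate case $M = 0$ is trivial: each slice is an independent problem inside the finite group $G$, solved by a constant---thus periodic---sequence, so I assume $M \geq 1$.)

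Using the natural F\o lner sequence $F_N := G \times [-N, N]$ together with Lemma 2.3, the density of $A$ becomes
$$
\overline{\delta}_{\mc{F}}(A) \;=\; \limsup_{N \to \infty} \frac{1}{(2N+1)|G|} \sum_{n = -N}^{N} |\omega_n|,
$$
and $\md_\Gamma(E) = \sup_{\omega \in X_E} \overline{\delta}_\mc{F}(A_\omega)$. Next I build a finite directed graph $\mc{G}$: vertices are the admissible $M$-words $u = (u_1, \ldots, u_M) \in \Sigma^M$, and an arc from $u$ to $v$ exists precisely when $(u_2, \ldots, u_M) = (v_1, \ldots, v_{M-1})$ and the concatenated $(M+1)$-word $(u_1, \ldots, u_M, v_M)$ is admissible. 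Each arc $u \to v$ carries the weight $|v_M|$. By construction, bi-infinite walks in $\mc{G}$ correspond bijectively to elements of $X_E$, and a periodic walk of length $t$ gives a sequence with $\omega_{n+t} = \omega_n$, yielding a set $A_\omega$ with $A_\omega + (0, t) = A_\omega$---periodic in the required sense.

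The time-averaged edge weight along any bi-infinite walk equals $|G| \cdot \overline{\delta}_\mc{F}(A_\omega)$. I then invoke the classical max-mean cycle theorem (cf.\ Karp): on a finite weighted directed graph, the supremum over all bi-infinite walks of the $\limsup$ mean edge weight coincides with the maximum mean weight over simple cycles, and this maximum is attained. A maximizing simple cycle produces a periodic element of $X_E$ realizing $\md_\Gamma(E)$, which unwinds to a periodic $E$-avoiding subset of $\Gamma$ of density $\md_\Gamma(E)$.

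The main technical point is the last step: one must verify that the supremum of $\limsup$-averaged weights over \emph{all} bi-infinite walks agrees with the maximum cycle mean. Since in a finite graph every forward- (resp.\ backward-) infinite walk eventually remains in some strongly connected component, and within any such component the maximum $\limsup$ of the running mean edge weight equals the maximum cycle mean there, the statement follows by splitting the sum $\sum_{|n|\leq N}$ into its positive and negative halves. Crucially, no admissibility issue arises at the ``seam'' of the simple cycle when it is traversed periodically, because the local constraints defining $X_E$ have been built into the very arcs of $\mc{G}$.
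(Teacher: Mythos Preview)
Your proof is correct and takes a genuinely different route from the paper's. The paper follows the original Cantor--Gordon argument directly: first approximate $\md_\Gamma(E)$ by periodic $E$-avoiding sets (by truncating an arbitrary near-optimal set and padding with empty slices), then reduce the period by a pigeonhole argument on the windows $P\cap(G\times[j,j+R-1])$, showing that whenever the period exceeds $2^{R|G|}$ two windows must coincide and one can splice to get a shorter period without losing density. Your approach instead encodes the problem as a subshift of finite type over the alphabet $\mc{P}(G)$, passes to the standard higher-block graph, and invokes the maximum-mean-cycle theorem. Both arguments rest on the same finiteness observation (the set of $M$-windows is finite), but you package it more structurally: your method makes immediate that $\md_\Gamma(E)$ is a rational of the form $p/(t|G|)$ with $t\le 2^{M|G|}$, and it points to an explicit algorithm (Karp's) for computing the density. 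The paper's argument is more elementary and self-contained, requiring no reference to symbolic dynamics or graph-optimization results. One minor slip: the identity $\md_\Gamma(E)=\sup_A\overline{\delta}_{\mc{F}}(A)$ you invoke is the lemma preceding Lemma~\ref{lem:estiminf} (equation~\eqref{eq:FGMD2}), not Lemma~\ref{lem:estiminf} itself. Also, your reduction to window size $M+1$ tacitly uses that $E$ may be replaced by a set with non-negative $\mb{Z}$-components (as in the paper); this is harmless but worth stating.
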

Using $\varphi$ above, Theorem \ref{thm:corank1} answers Question \ref{Q:periodicity} positively for rank$(\Lambda)=r-1$.
\begin{proof}
This is a simple generalization of the proof of \cite[Theorem 5]{C&G}. 

Changing the sign of elements of $E$ if necessary, we can assume that all of them have non-negative $\mb{Z}$-component. Let $R$ be even and sufficiently large so that $E\subset G\times [0,R-1]$. 

Let $\mc{F}$ denote the tiling F\o lner sequence in $\Gamma$ with $N$-th term the set $F_N:=G\times [-RN,RN-1]$, $N\in \mb{N}$.

First we prove that $\md_{\Gamma}(E)$ is approximated arbitrarily closely by densities of \emph{periodic} $E$-avoiding sets $P\subset \Gamma$, i.e.\
\begin{equation}\label{eq:Mdperiodic}
\md_{\Gamma}(E)=\sup \{\delta_\mc{F}(P): P\subset \Gamma \textrm{ is $E$-avoiding and periodic}\},
\end{equation} 
where $\delta_\mc{F}(P)$ is the density $\lim_{N\to\infty} |F_N\cap P|/|F_N|$ (this limit exists when $P$ is periodic).

Fix any $\varepsilon>0$. By \eqref{eq:FGMD2}, there is an $E$-avoiding set $A\subset \Gamma$ with $\overline{\delta_\mc{F}}(A)> \md_{\Gamma}(E) - \varepsilon$. By definition of $\overline{\delta_\mc{F}}(A)$, for infinitely many $N$ we have $|A\cap F_N|/|F_N|> \md_{\Gamma}(E) -\varepsilon$. Fix any such $N$ satisfying also $|F_N|/(|F_N|+R|G|) \geq 1-\varepsilon$. We then define a periodic $E$-avoiding set $P\subset \Gamma$ as follows:
\begin{itemize}
\item $P\cap F_N=A\cap F_N$,
\item $P\cap \big(\cup_{j\in [-R(N+\frac{1}{2}),R(N+\frac{1}{2})-1]\setminus [-RN,RN-1])} G\times \{j\}\big)=\emptyset$,
\item $P$ is periodic with period $R(2N+1)$.
\end{itemize}
Then $\delta_\mc{F}(P)=\frac{|A\cap F_N|}{|F_N|+R|G|}\geq \frac{|A\cap F_N|}{|F_N|}-\varepsilon> \md_{\Gamma}(E)-2\varepsilon$, and \eqref{eq:Mdperiodic} follows.

Now we claim that if $P\subset\Gamma$ is a periodic $E$-avoiding set of period $t>2^{R|G|}$, then there is another periodic $E$-avoiding set $P'\subset\Gamma$, of period strictly less than $t$, such that $\delta_\mc{F}(P')\geq \delta_\mc{F}(P)$. 

To prove this, fix any periodic $E$-avoiding set $P\subset\Gamma$ of period $t>2^{R|G|}$, and consider the sets $P_j:=P\cap (G\times [j,j+R-1])$ for $j\in [t]$. Translating any $P_j$ by $-(0_G,j)$ we obtain an $E$-avoiding subset of the finite set $G\times [0,R-1]$. Since there are at most $2^{R|G|}$ such subsets, the assumption $t>2^{R|G|}$ implies that there exist $i< j$ in $[t]$ such that $P_i$ and $P_j$ are translates of the same set. 
Let $A$ be the periodic set with fundamental cell $\bigsqcup_{k\in [i,j-1]} P_k$, and let $B$ be the periodic set that is the ``complement" of $A$ in $P$, i.e.\ $B$ has fundamental cell $\bigsqcup_{k\in [j,t+i-1]} P_k$. Now note that $A$ and $B$ are both $E$-avoiding. Indeed, if there was a difference $x-y$ in $(A-A)\cap E$ or in $(B-B)\cap E$, then $x,y$ could not lie in the same piece $P_k$ (as these are $E$-avoiding), and so, since $E\subset G\times [0,R-1]$, there would exist $k$ such that $x$ is in the piece $P_k$ and $y$ is in the next piece $P_{k+1\!\mod \ell}$ (where $\ell=j-i$ if we are in $A$ and $\ell=t-j+i$ if we are in $B$). By construction, any two consecutive pieces in $A$ or $B$ correspond to identical consecutive copies in $P$, so we obtain the contradiction that $P$ is not $E$-avoiding. Having thus proved that $A$ and $B$ are both $E$-avoiding, now note that, by the pigeonhole principle, one of these sets has density at least $\delta_\mc{F}(P)$, so letting $P'$ be this set, we have proved our claim above. 

If this new periodic set $P'$ has period still greater than $2^{R|G|}$, we repeat the above argument. After a finite number of repetitions, we must eventually obtain an $E$-avoiding periodic set of period at most $2^{R|G|}$ and of density at least the density of the original set $P$. Combining this with \eqref{eq:Mdperiodic}, we deduce the following equality, which implies the theorem's conclusion:
\[
\md_{\Gamma}(E) = \max \big\{\delta_\mc{F}(P)\colon P\subset \Gamma \textrm{ is $E$-avoiding and periodic with period $t\leq 2^{R|G|}$}\big\}. \qedhere
\]
\end{proof}

We can now deduce the main result announced in the introduction.
\begin{proof}[Proof of Theorem \ref{thm:3rational}]
When the missing-difference set $D\subset \ab$ has cardinality $r\leq 3$, the case rank$(\Lambda)=r-1$ solved in this section, together with the case rank$(\Lambda)=1$ solved in Section \ref{sec:rank-1}, show that in all cases, for $\Lambda=\Lambda_{\ab,D}$, the Motzkin density $\md_{\ab}(D)=\md_{\mb{Z}^r/\Lambda}(B_\Lambda)$ is attained by a periodic $B_\Lambda$-avoiding set, thus giving a positive answer to Question \ref{Q:periodicity}, hence also to Questions \ref{Q:min} and \ref{Q:rationality}.
\end{proof}

\section{Bounds for the Motzkin problem in $\mb{T}$}\label{sec:Fourier}

In this section we discuss certain lower and upper bounds for Motzkin densities in the setting of the circle group $\mb{T}$. We begin with lower bounds.

For $t\in\T^r$ we shall use the following slight variant of the notation \eqref{eq:Lambda}:
\[
 \Lambda(t)=\big\{n\in\Z^r\,:\,n\cdot t= 0\in\T\big\},
\]
where $n\cdot t= n_1t_1+\cdots+n_rt_r\mod 1$. In other words $\Lambda(t)=\Lambda_{\mb{T},D}$ in the sense of \eqref{eq:Lambda}, for $D=\{t_i : i\in [r]\}$. We use $\|x\|$ to indicate the distance from $x\in\R$ to the nearest integer. This induces the natural metric on $\mb{T}$.

\begin{lemma}\label{lem:ext_kron}
 Let $\alpha,\beta\in\T^r$ such that $\Lambda(\alpha)\subset \Lambda(\beta)$.
 Then for each $\varepsilon>0$ there exists $N\in\Z$ such that $\|N\alpha_j-\beta_j\|<\varepsilon$ for every $j\in\{1,2,\dots, r\}$.
\end{lemma}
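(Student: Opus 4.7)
The plan is to recognize this as a straightforward consequence of Pontryagin duality on $\mathbb{T}^r$, by identifying the closed subgroup of $\mathbb{T}^r$ generated by $\alpha$ in terms of its annihilator, and observing that the hypothesis $\Lambda(\alpha)\subset\Lambda(\beta)$ is exactly the condition for $\beta$ to lie in that subgroup.

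First, I would introduce the closed subgroup
\[
H\;:=\;\overline{\{N\alpha\,:\,N\in\mathbb{Z}\}}\;\subset\;\mathbb{T}^r.
\]
The conclusion of the lemma is precisely the statement that $\beta\in H$, since membership in the closure gives approximation by integer multiples of $\alpha$ in every coordinate simultaneously. So the whole task is to prove $\beta\in H$.

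Next, I would compute the annihilator $H^{\perp}=\{n\in\mathbb{Z}^r:n\cdot h=0\text{ in }\mathbb{T}\text{ for all }h\in H\}$. Since the map $h\mapsto n\cdot h$ is a continuous character of $\mathbb{T}^r$, and $H$ is the closure of the cyclic subgroup generated by $\alpha$, this character vanishes on $H$ iff it vanishes on every $N\alpha$, iff $N(n\cdot\alpha)=0$ in $\mathbb{T}$ for all $N\in\mathbb{Z}$, iff $n\cdot\alpha=0$ in $\mathbb{T}$. Hence
\[
H^{\perp}\;=\;\Lambda(\alpha).
\]

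Finally, I would invoke the standard duality $H=(H^{\perp})^{\perp}$ valid for closed subgroups of the compact abelian group $\mathbb{T}^r$, which gives
\[
H\;=\;\bigl\{x\in\mathbb{T}^r\,:\,n\cdot x=0\text{ in }\mathbb{T}\text{ for every }n\in\Lambda(\alpha)\bigr\}.
\]
Thus $\beta\in H$ is equivalent to the condition $n\cdot\beta=0$ for all $n\in\Lambda(\alpha)$, which is exactly the hypothesis $\Lambda(\alpha)\subset\Lambda(\beta)$. This finishes the argument.

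There is no serious obstacle; the only point that requires care is the duality step $H=(H^{\perp})^{\perp}$, which is standard for closed subgroups of $\mathbb{T}^r$ but should be invoked explicitly (alternatively, one can argue directly: if $\beta\notin H$, then since $\mathbb{T}^r/H$ is a nontrivial compact abelian group, its Pontryagin dual separates the coset $\beta+H$ from $0$, producing some $n\in H^{\perp}=\Lambda(\alpha)$ with $n\cdot\beta\neq 0$, contradicting $\Lambda(\alpha)\subset\Lambda(\beta)$). The conclusion for a fixed $\varepsilon$ then follows from the definition of closure in the product topology on $\mathbb{T}^r$.
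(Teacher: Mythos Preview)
Your proposal is correct and follows essentially the same approach as the paper: both identify $\Lambda(\alpha)$ with the annihilator $\alpha^{\perp}$, invoke the duality $(\alpha^{\perp})^{\perp}=\overline{\mathbb{Z}\alpha}$, and read off that $\Lambda(\alpha)\subset\Lambda(\beta)$ is exactly the condition $\beta\in\overline{\mathbb{Z}\alpha}$. The paper's version is simply more terse, citing Rudin for the duality step rather than spelling out the alternative argument via characters on $\mathbb{T}^r/H$.
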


\begin{proof}
The lattice $\Lambda(\alpha)$ can be identified as the annihilator $\alpha^{\perp}:=\{\gamma\in \widehat{\mb{T}^r}:\gamma(\alpha)=0\}$. Then $\{\beta\in \mb{T}^r: \Lambda(\alpha)\subset \Lambda(\beta)\}$ is the closed (hence compact) subgroup $(\alpha^{\perp})^{\perp}=\overline{\mb{Z}\alpha}$ of $ \mb{T}^r$ (for this last equality, see for instance \cite[Lemma 2.1.3 ]{rudin}). Since the metric $d_\infty(x,y):=\max_{j\in [r]} \|x_j-y_j\|$ generates the topology on $\mb{T}^r$, the conclusion follows immediately.
\end{proof}

We use this to obtain the following lower bound on Motzkin densities in $\mb{T}$, whose proof also yields a construction of $D$-avoiding sets in $\mb{T}$ whose measures approximate arbitrarily closely the Motzkin density in many cases (as we shall see below).

\begin{theorem}\label{thm:LB}
Let $D=\{\alpha_1,\ldots,\alpha_r\}\subset\mb{T}\setminus\{0\}$, and let $\alpha=(\alpha_1,\ldots,\alpha_r)$. Then
\begin{equation}\label{eq:Bdef}
\md_\mb{T}(D)\;\geq\; 
 \kappa_\mb{T}(D):= \max\big\{\min_{i\in [r]}(\|\beta_i\|): \beta=(\beta_1,\ldots,\beta_r)\in \overline{\mb{Z}\alpha}\big\}.
\end{equation}
\end{theorem}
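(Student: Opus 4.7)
The plan is to construct, for each $\varepsilon>0$, a $D$-avoiding Borel set $A\subset \mb{T}$ with $\mu(A)\geq \kappa_\mb{T}(D)-2\varepsilon$, and then let $\varepsilon\to 0$. First, the supremum defining $\kappa_\mb{T}(D)$ is attained: $\overline{\mb{Z}\alpha}$ is a compact subset of $\mb{T}^r$ and $\beta\mapsto \min_{i\in[r]}\|\beta_i\|$ is continuous, so there is some $\beta=(\beta_1,\ldots,\beta_r)\in\overline{\mb{Z}\alpha}$ realising the maximum. We may assume $\beta\neq 0$, since otherwise $\kappa_\mb{T}(D)=0$ and the conclusion is trivial. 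Next, I would verify that $\Lambda(\alpha)\subset \Lambda(\beta)$: any $n\in\Lambda(\alpha)$ defines a continuous character $\mb{T}^r\to\mb{T}$, $\gamma\mapsto n\cdot\gamma$, which vanishes on $\mb{Z}\alpha$ and therefore on its closure, giving $n\cdot\beta=0$.

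With this set-up, Lemma \ref{lem:ext_kron} supplies, for each $\varepsilon>0$, an integer $N\in\mb{Z}$ with $\|N\alpha_j-\beta_j\|<\varepsilon$ for every $j\in[r]$. For $\varepsilon$ small enough the approximation forces $N\neq 0$, since $\beta\neq 0$. By the reverse triangle inequality,
\[
\|N\alpha_j\|\;>\;\|\beta_j\|-\varepsilon\;\geq\;\kappa_\mb{T}(D)-\varepsilon\qquad\text{for every }j\in[r].
\]
Set $\ell:=\kappa_\mb{T}(D)-2\varepsilon$, let $A':=[0,\ell)\subset\mb{T}$ (so $A'-A'\subset(-\ell,\ell)$ in $\mb{T}$), and define
\[
A\;:=\;\phi^{-1}(A'),\qquad\phi:\mb{T}\to\mb{T},\;x\mapsto Nx.
\]
A direct computation gives $\mu(A)=\mu(A')=\ell$, because $\phi$ is an $|N|$-to-$1$ covering under which the pre-image of an arc of length $\ell$ splits into $|N|$ arcs of length $\ell/|N|$. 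To see that $A$ is $D$-avoiding, suppose $x,y\in A$ with $y-x=\alpha_j$ in $\mb{T}$; then $Ny-Nx=N\alpha_j\in A'-A'$, contradicting $\|N\alpha_j\|>\ell$. Hence $\md_\mb{T}(D)\geq \ell$, and sending $\varepsilon\to 0$ completes the argument.

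The substantive step is the ``dilation trick'' of pulling back a short arc via multiplication by $N$: it converts the \emph{approximate} linear relation $N\alpha\approx\beta$ in $\mb{T}^r$, furnished by Lemma \ref{lem:ext_kron}, into an \emph{exact} avoidance statement for $\alpha_1,\ldots,\alpha_r$ in $\mb{T}$. Beyond this, the main things to watch out for are standard: attaining the maximum in \eqref{eq:Bdef}, excluding the trivial case $\beta=0$, and the mild nuisance of excluding $N=0$ when invoking Lemma \ref{lem:ext_kron}. I do not foresee a deeper obstacle, and the construction has the bonus feature of producing an explicit $D$-avoiding set (a finite disjoint union of arcs) whose measure approaches $\kappa_\mb{T}(D)$.
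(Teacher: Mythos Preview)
Your proof is correct and follows essentially the same approach as the paper: both use Lemma \ref{lem:ext_kron} to find $N$ with $N\alpha$ close to $\beta$, then take the preimage under $x\mapsto Nx$ of a short arc to produce a $D$-avoiding set. The only cosmetic differences are that the paper uses a centered arc $\{x:\|Nx\|\leq (b-\varepsilon)/2\}$ rather than your half-open $[0,\ell)$, and it works with an arbitrary $\beta\in\overline{\mb{Z}\alpha}$ before maximising at the end, whereas you fix a maximiser from the start.
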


\begin{proof}
Let $\kappa= \kappa_\mb{T}(D)$. Note that $\kappa$ does not change if we restrict $\beta$ in \eqref{eq:Bdef} to having $\beta_i\neq 0$ for all $i\in [r]$. Fix any such $\beta\in \overline{\mathbb{Z}\alpha}$, and let $b=\min_i \|\beta_i\|>0$. Fix any $\varepsilon\in (0,b)$ and let $N$ be an integer given by Lemma \ref{lem:ext_kron}, so that $d_\infty(N\alpha,\beta)<\varepsilon$ (in particular, we must have $N\neq 0$).

We claim that the following set is $D$-avoiding:
\begin{equation}\label{eq:D-setconstr}
A=N^{-1}[-\tfrac{b-\varepsilon}{2},\tfrac{b-\varepsilon}{2}]:=\{x\in \mb{T}: \|Nx\|\leq \tfrac{b-\varepsilon}{2}\}=\bigcup_{k\in [0,|N|-1]}
  \Big(
  \tfrac{k}{N}+ \big[-\tfrac{b-\varepsilon}{2N},\tfrac{b-\varepsilon}{2N}\big]
  \Big).
 \end{equation}
Indeed, if we had $\alpha_j=a_1-a_2$ with $a_1,a_2\in A$, then we would have $\|N\alpha_j\|\leq b-\varepsilon$, so $\|N\alpha_j-\beta_j\|\geq \|\beta_j\|-\|N\alpha_j\|\geq \varepsilon$, a contradiction.

Given the claim, we deduce that $\md_{\mb{T}}(D)\geq \mu_{\mb{T}}(A)=b-\varepsilon$, and as $\varepsilon$ was arbitrary, we conclude that $\md_{\mb{T}}(D)\geq b$. Taking now the maximum over such $b$, the result follows.
\end{proof}
\begin{remark}
The quantity $\kappa_{\mb{T}}(D)$ is an analogue for $\mb{T}$ of a parameter that has been studied in the classical setting of Motzkin's problem in $\mb{Z}$ since the original paper \cite{C&G}, and is sometimes called the \emph{kappa value}; see \cite[Theorem 1]{C&G}, \cite[Remark 1]{Hara}, and \cite{L&R}.  Letting $q_d:\mb{R}^r\to\mb{T}^r$ be the canonical homomorphism $x\mapsto (x_i\!\mod 1)_{i\in [r]}$, we have $\overline{\mb{Z}\alpha}=(\alpha^{\perp})^{\perp}=q_r(\{\beta'\in \mb{R}^r:\forall\,n\in \Lambda(\alpha),\, n_1\beta'_1+\cdots+ n_r\beta'_r\in \mb{Z}\})$. In particular, this last set includes $q_r(\Lambda(\alpha)^*)$ where $\Lambda(\alpha)^*$ is the dual lattice of $\Lambda(\alpha)$ (i.e.\ the lattice consisting of $v\in \Span_{\mb{R}}(\Lambda(\alpha))$ satisfying $v_1x_1+\cdots+v_r x_r \in\mb{Z}$ for every $x\in \Lambda(\alpha)$). Thus $\kappa_{\mb{T}}(D)\geq \max\big\{\min_{i\in [d]}(\|\beta_i\|): \beta \in q_r(\Lambda(\alpha)^*)\big\}$, where the latter maximum can be easier to compute than $\kappa_{\mb{T}}(D)$ itself. 
\end{remark}
The following result is obtained in \cite{CCRS} using ergodic and graph theoretic tools. We give here a shorter proof using mainly Theorem \ref{thm:LB}.

\begin{theorem}\label{thm:half_parity}
Let $\alpha\in\mb{T}^r$ and suppose that $\Lambda(\alpha)$ has a set of generators $\{g_j\}_{j=1}^r$ such that the sum of the coordinates of each $g_j$ is even. Then $\md_{\T}(D)=1/2$ for $D=\{\alpha_1,\dots,\alpha_r\}$. Moreover, if $\alpha\not\in\Q^r$, then no $D$-avoiding set $A\subset\mb{T}$ satisfies $\mu(A)=1/2$.
\end{theorem}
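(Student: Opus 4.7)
The plan is to handle the equality $\md_\mathbb{T}(D)=1/2$ first, and then treat the non-attainment statement as a separate Fourier-analytic argument. The trivial upper bound is immediate: for any Borel $D$-avoiding $A\subset\mathbb{T}$ and any fixed $\alpha_i\in D$, the sets $A$ and $A+\alpha_i$ are disjoint and have the same measure, so $2\mu(A)\leq 1$ and hence $\md_\mathbb{T}(D)\leq 1/2$. For the matching lower bound I would apply Theorem~\ref{thm:LB} with the candidate $\beta=(1/2,\ldots,1/2)\in\mathbb{T}^r$. As used in Lemma~\ref{lem:ext_kron}, $\overline{\mathbb{Z}\alpha}$ coincides with the annihilator $\Lambda(\alpha)^\perp$, so membership of $\beta$ in $\overline{\mathbb{Z}\alpha}$ amounts to $n\cdot\beta\equiv 0\pmod 1$ for every $n\in\Lambda(\alpha)$, i.e.\ $n_1+\cdots+n_r$ being even for every $n\in\Lambda(\alpha)$. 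The set of $n\in\mathbb{Z}^r$ with even coordinate sum is a subgroup of $\mathbb{Z}^r$, so it suffices that the given generators $\{g_j\}$ of $\Lambda(\alpha)$ lie in it, which is precisely the hypothesis. Consequently $\kappa_\mathbb{T}(D)\geq \min_i\|\beta_i\|=1/2$, and Theorem~\ref{thm:LB} yields $\md_\mathbb{T}(D)\geq 1/2$.

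For the non-attainment assertion, assume $\alpha\notin\mathbb{Q}^r$ and that some Borel $D$-avoiding $A\subset\mathbb{T}$ satisfies $\mu(A)=1/2$. For each $i\in[r]$, the disjointness of $A$ and $A+\alpha_i$ together with $\mu(A)+\mu(A+\alpha_i)=1$ forces $A\cup(A+\alpha_i)=\mathbb{T}$ modulo a null set, so $\mathbf{1}_A(x)+\mathbf{1}_A(x-\alpha_i)=1$ almost everywhere. Iterating gives $\mathbf{1}_A(x+2\alpha_i)=\mathbf{1}_A(x)$ a.e., i.e.\ $\mathbf{1}_A$ is $2\alpha_i$-periodic. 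Taking Fourier coefficients, for every $n\in\mathbb{Z}$ one has $\widehat{\mathbf{1}_A}(n)\,(e^{-4\pi i n\alpha_i}-1)=0$, so $\widehat{\mathbf{1}_A}(n)\neq 0$ forces $2n\alpha_i\in\mathbb{Z}$ for every $i\in[r]$. Since by assumption some $\alpha_{i_0}$ is irrational, the condition $2n\alpha_{i_0}\in\mathbb{Z}$ holds only for $n=0$. Therefore $\widehat{\mathbf{1}_A}(n)=0$ for all $n\neq 0$, so $\mathbf{1}_A$ equals its mean $1/2$ a.e.\ — impossible for a $\{0,1\}$-valued function. This gives the desired contradiction.

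The only steps that require some care are verifying that the subgroup-generated condition is used correctly in the lower bound (one really only needs \emph{all} elements of $\Lambda(\alpha)$, and not just the given generators, to have even coordinate sum, which is automatic because the parity of the coordinate sum is a group homomorphism $\mathbb{Z}^r\to\mathbb{Z}/2$), and formalising the a.e.\ identities in the non-attainment argument so that the Fourier conclusion is valid. Neither is a serious obstacle. It is also worth noting that the proof of Theorem~\ref{thm:LB} together with the explicit choice $\beta=(1/2,\ldots,1/2)$ produces, for any $\varepsilon>0$, concrete near-extremal $D$-avoiding sets of the form $N^{-1}[-\tfrac{1/2-\varepsilon}{2},\tfrac{1/2-\varepsilon}{2}]$ where $N$ is chosen so that $\|N\alpha_i-1/2\|<\varepsilon$ for all $i$, which sharpens the lower bound quantitatively and dovetails nicely with the non-attainment statement, since no such finite $N$ can yield exactly $1/2$ when $\alpha$ is irrational.
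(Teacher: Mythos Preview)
Your proof is correct and, for the equality $\md_{\mb{T}}(D)=1/2$, takes exactly the same route as the paper: apply Theorem~\ref{thm:LB} with $\beta=(1/2,\ldots,1/2)$, checking $\beta\in\overline{\mb{Z}\alpha}$ via the parity hypothesis on the generators of $\Lambda(\alpha)$.

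For the non-attainment assertion the approaches diverge slightly. The paper simply observes that a $D$-avoiding set is in particular $\{\alpha_j\}$-avoiding for some irrational $\alpha_j$ and then invokes the last sentence of \cite[Theorem~2.4]{CCRS}. You instead give a self-contained Fourier argument: from $\mu(A)=1/2$ and disjointness of $A$ and $A+\alpha_i$ you deduce $2\alpha_i$-periodicity of $\mathbf{1}_A$, hence the Fourier support of $\mathbf{1}_A$ lies in $\{n:2n\alpha_i\in\mb{Z}\}$, which for irrational $\alpha_i$ collapses to $\{0\}$, contradicting that $\mathbf{1}_A$ is $\{0,1\}$-valued. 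This is essentially what lies behind the cited result, so your version is more transparent within the paper at the cost of a few extra lines; either approach is perfectly adequate here.
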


\begin{proof}
Let $\beta_1=\beta_2=\dots=\beta_r=1/2$ and note that any $n\in\Lambda(\alpha)$ has $n_1+n_2+\dots +n_r$ even and hence $n\in \Lambda(\beta)$, so we can apply Theorem~\ref{thm:LB} with this $\beta$, deducing $\md_{\mb{T}}(D)=1/2$ as claimed. To see the last part, note that a $D$-avoiding set would in particular have to be $\{\alpha_j\}$-avoiding, for a value of $j$ such that $\alpha_j\not\in\mb{Q}$, and then such a set cannot have measure $1/2$, by the last sentence of \cite[Theorem 2.4]{CCRS}.
\end{proof}

\begin{remark}
The fact that the supremum $\md_{\mb{T}}(D)=1/2$ in Theorem \ref{thm:half_parity} is not attained for $\alpha\not\in\mb{Q}^r$ does not yield a negative answer to Question \ref{Q:min}. Indeed, for the corresponding number $\md_{\mb{Z}^r/\Lambda}(B_\Lambda)$ (equal to $\md_{\mb{T}}(D)$ by Theorem \ref{thm:trans}), we claim that the infimum in the expression \eqref{eq:mdinf} of this number \emph{is} attained. To prove this, note that the assumption that the coordinate sum of every $g_j$  is even implies that there is a homomorphism $q:\mb{Z}^r/\Lambda\to\mb{Z}/2\mb{Z}$, well defined by $q(x+\Lambda)= \sum_{i\in [r]} x_i\!\! \mod 2$. For $i=0,1$ let $V_i=q^{-1}(\{i\})$. Then, any set $S\subset \mb{Z}^r/\Lambda$ that is entirely included in $V_i$ (for $i=0$ or $1$) must be $B_\Lambda$-avoiding (as then $S-S\subset V_0$, whereas $B_\Lambda\subset V_1$). Therefore, to prove our claim it suffices to show that there is a finite set $F\subset \mb{Z}^r/\Lambda$ which tiles $\mb{Z}^r/\Lambda$ and such that $|V_0\cap F|=|V_1\cap F|$ (as then, with the notation from \eqref{eq:mdinf}, we have $\phi_{B_\lambda}(F)/|F|=|V_0\cap F|/|F|=1/2$). To see that such a tiling set $F$ exists, one can use \eqref{eq:FundThm}. For instance, if the subgroup $F'=\mb{Z}_{\alpha_1}\oplus\cdots\oplus \mb{Z}_{\alpha_d}\oplus\{0\}$ does not already satisfy $|V_0\cap F'|=|V_1\cap F'|$, then there must exist $v\in \{0\}\oplus\mb{Z}^{r-d}$ such that $|V_0\cap (v+F')|=|V_1\cap F'|$ (otherwise $V_0$, $V_1$ would not have equal density in $\mb{Z}^r/\Lambda$). But then the tiling set $F=F'\cup (v+F')$ satisfies $|V_0\cap F|=|V_1\cap F|$, and our claim follows.
\end{remark}
An immediate consequence of Theorem \ref{thm:half_parity} is the following corollary, obtained when $\{1\}\cup D$ is linearly independent over $\Q$, using that in this case $\Lambda(\alpha)=\{0\}$ (this consequence was proved differently in \cite[Theorem 2.4]{CCRS}).

\begin{corollary}[{}]\label{cor:KronFree}
If $\{1\}\cup D$ is linearly independent over $\Q$ then $\text{\rm Md}_{\mb{T}}(D)=1/2$ and no $D$-avoiding set $A\subset\mb{T}$ satisfies $\mu(A)=1/2$.
\end{corollary}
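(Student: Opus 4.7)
The plan is to deduce this directly from Theorem~\ref{thm:half_parity} by verifying that under the given hypothesis the lattice $\Lambda(\alpha)$ collapses to the trivial lattice $\{0\}$, so that its (empty) generating set vacuously satisfies the even-coordinate-sum requirement.

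First I would unpack the definition of $\Lambda(\alpha)$: an element $n=(n_1,\ldots,n_r)\in \mathbb{Z}^r$ lies in $\Lambda(\alpha)$ iff $n_1\alpha_1+\cdots+n_r\alpha_r\equiv 0\pmod 1$, which is equivalent to the existence of some integer $n_0$ with $n_0\cdot 1+n_1\alpha_1+\cdots+n_r\alpha_r=0$ in $\mathbb{R}$. The hypothesis that $\{1,\alpha_1,\ldots,\alpha_r\}$ is linearly independent over $\mathbb{Q}$ then forces $n_0=n_1=\cdots=n_r=0$, so $\Lambda(\alpha)=\{0\}$.

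Next, since $\Lambda(\alpha)=\{0\}$ is generated by the empty set, the parity condition in Theorem~\ref{thm:half_parity} is satisfied vacuously. Applying that theorem yields $\md_{\mathbb{T}}(D)=1/2$. Finally, for the non-attainment clause, I would note that the $\mathbb{Q}$-linear independence of $\{1\}\cup D$ implies in particular that every $\alpha_j$ is irrational, so $\alpha\notin\mathbb{Q}^r$, and the second assertion of Theorem~\ref{thm:half_parity} rules out a $D$-avoiding set of measure exactly $1/2$.

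There is no real obstacle here: the entire content of the corollary has been packaged into Theorem~\ref{thm:half_parity}, and the only thing to check is the translation between the hypothesis on $\{1\}\cup D$ and the hypothesis on $\Lambda(\alpha)$ used in that theorem.
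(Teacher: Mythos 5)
Your proof is correct and follows exactly the route the paper takes: the paper itself states the corollary is "obtained when $\{1\}\cup D$ is linearly independent over $\mathbb{Q}$, using that in this case $\Lambda(\alpha)=\{0\}$," then appeals to Theorem~\ref{thm:half_parity}. One minor cosmetic point: Theorem~\ref{thm:half_parity} as stated asks for a generating set $\{g_j\}_{j=1}^r$ of cardinality $r$, so rather than speaking of the empty generating set being "vacuous," it is cleaner to take $g_1=\cdots=g_r=0$ (each with coordinate sum $0$, which is even); either way the hypothesis of the theorem is met and your conclusion stands.
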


In the other extreme, some rational cases are also covered by Theorem \ref{thm:half_parity} without the use of graph theory made in \cite[\S 4]{CCRS}.

\begin{corollary}
For any set $D=\{a_1/q_1,\dots, a_r/q_r\}$ composed of irreducible fractions with $q_j\equiv 2\pmod{4}$, we have $\text{\rm Md}_{\T}(D)=1/2$. Moreover, in this case there exists a $D$-avoiding set $A\subset\T$ with $\mu(A)=1/2$.
\end{corollary}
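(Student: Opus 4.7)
The plan is to apply Theorem~\ref{thm:half_parity} after verifying its hypothesis in a strong form, and then to construct an explicit extremal set for the ``moreover'' part.

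First I would verify that \emph{every} element of $\Lambda(\alpha)$ has even coordinate sum, so that any generating set of $\Lambda(\alpha)$ is automatically admissible for Theorem~\ref{thm:half_parity}. Writing $q_j=2m_j$ with $m_j$ odd (using $q_j\equiv 2\pmod 4$), each numerator $a_j$ must itself be odd since $q_j$ is even and $\gcd(a_j,q_j)=1$. Given any $n\in\Lambda(\alpha)$, multiplying the integer $\sum_j n_j a_j/q_j$ by $Q:=\operatorname{lcm}(q_1,\ldots,q_r)=2N$, where $N:=\operatorname{lcm}(m_1,\ldots,m_r)$ is odd, produces the even integer $\sum_j n_j\,a_j(N/m_j)$ in which every coefficient $a_j(N/m_j)$ is a product of odd integers, hence odd; reducing mod $2$ would then force $n_1+\cdots+n_r\equiv 0\pmod 2$. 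Theorem~\ref{thm:half_parity} then yields $\md_\T(D)=1/2$.

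For the extremal set, the same computation gives $N\alpha_j=(N/m_j)\,a_j/2$, an odd integer over $2$, so $N\alpha=(1/2,\ldots,1/2)$ in $\T^r$. I would then take
\[
A:=\{x\in\T:\|Nx\|<1/4\},
\]
a union of $N$ open arcs of length $1/(2N)$ with $\mu(A)=1/2$. If $a,a'\in A$ satisfied $a-a'\equiv\alpha_j\pmod 1$, then $\|N(a-a')\|\le\|Na\|+\|Na'\|<1/2$, contradicting $\|N\alpha_j\|=1/2$; hence $A$ would be $D$-avoiding.

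The only delicate point I foresee is the insistence on the strict inequality in the definition of $A$: the closed analogue $\|Nx\|\le 1/4$, which is the shape used in the proof of Theorem~\ref{thm:LB}, would admit the boundary configuration $\|Na\|=\|Na'\|=1/4$ with $Na$ and $Na'$ on opposite sides of the nearest integer, in which case $a-a'$ can genuinely realize $\alpha_j$ in $\T$. Using the open version eliminates this; beyond that minor point, no obstacles are anticipated.
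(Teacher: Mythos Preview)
Your proof is correct and follows essentially the same route as the paper: both verify the parity condition for $\Lambda(\alpha)$ by clearing denominators with $2\,\mathrm{lcm}(q_1/2,\dots,q_r/2)$ and then invoke Theorem~\ref{thm:half_parity}, and both realise the extremal set via the construction \eqref{eq:D-setconstr} with $\beta=(1/2,\dots,1/2)$. Your observation that one must use the \emph{open} condition $\|Nx\|<1/4$ rather than the closed one is a genuine refinement---the paper simply says ``the construction in \eqref{eq:D-setconstr} works with $\varepsilon=0$'', but as you note (and as the example $D=\{1/2\}$, $N=1$ already shows), the closed set $\{\|Nx\|\le 1/4\}$ can contain boundary pairs whose difference hits $D$, so your added care here is warranted.
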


\begin{proof}
 By the irreducibility, $a_j$ is odd and $2M/q_j$ is also odd with $M=\text{\rm lcm}(q_1/2,\dots, q_r/2)$.
 If $n\in \Lambda_{\mb{T},D}$, i.e.\ if $\sum n_ja_j/q_j=n\in\mb{Z}$, then multiplying by $2M$ to clear the denominators, we deduce that $\sum n_j a_j 2M/q_j$ is even, whence $\sum n_j$ is also even and so the assumption in Theorem \ref{thm:half_parity} is satisfied.

For the last part, note that if $N$ is an odd multiple of $M$ then we have the equality $\|Na_j/q_j-1/2\|=0$, so the construction in \eqref{eq:D-setconstr} works with $\varepsilon = 0$.
\end{proof}

It is natural to wonder whether the inequality in \eqref{eq:Bdef} is always an equality. For rank$(\Lambda_{\mb{T},D})=0$ it is easy to see that both sides of \eqref{eq:Bdef} are 1/2. Let us analyze the case rank$(\Lambda_{\mb{T},D})=1$. 

Looking first at $r=2$, this means that $D=\{t_1,t_2\}\subset\mb{T}\setminus\mb{Q}$ satisfying $n_1t_1+n_2t_2=n\in\Z$ with $n_j\in\Z$ and $\gcd(n_1,n_2,n)=1$. 
If $2\mid n_1+n_2$, then Theorem~\ref{thm:half_parity}  gives us 
$\md_{\T}(D)=1/2$. If $2\nmid n_1+n_2$, we have by \cite[Theorem 1.2]{CCRS} (or Theorem \ref{thm:rk1-intro} in this paper) $\md_{\mb{T}}(D)=\frac{|n_1|+|n_2|-1}{2(|n_1|+|n_2|)}$ and choosing $\beta=(\text{sgn}(n_1)\mu,\text{sgn}(n_2)\mu)$, we have that $n_1\beta_1+n_2\beta_2=(|n_1|+|n_2|-1)/2\in\Z$, whence $\beta\in((t_1,t_2)^\perp)^\perp$ and so $B\ge \md_{\mb{T}}(D)$, so we have equality in \eqref{eq:Bdef} in this case. 

A similar argument for $r>2$, using the solution for $\md_{\mb{T}}(D)$ in Theorem \ref{thm:rk1-intro}, shows that equality in \eqref{eq:Bdef} holds in the full case rank$(\Lambda_{\mb{T},D})=1$.

\begin{remark}\label{rem:Bnonsharp}
There are cases in which the inequality in \eqref{eq:Bdef} is strict. Indeed, if $d_1,\ldots,d_r$ are integers in $[N-1]$, each coprime with $N$, then letting $D=\{d_1/N,\ldots,d_r/N\}$ viewed as a subset of $\mb{T}$, we have $\md_{\mb{T}}(D)=\md_{\mb{Z}/N\mb{Z}}(\{d_1,\ldots,d_r\})$, and this last quantity in turn equals $\alpha(G)/N$ where $G$ is the circulant graph $\textrm{Cay}(\mb{Z}/N\mb{Z}, \{d_1,\ldots,d_r\})$ (see e.g.\ \cite[Lemma 2.10]{CCRS}). Moreover, in this case $\kappa_{\mb{T}}(D)$ equals $N$ times the quantity $\lambda(G)$ studied for circulant graphs in \cite{G&Z}. As established in \cite[Theorem 4]{G&Z}, if our circulant graph $G$ above is not \emph{star-extremal}, then $\lambda(G)<\alpha(G)$ and so $\kappa_{\mb{T}}(D)<\md_{\mb{T}}(D)$. At the end of the paper \cite{G&Z} a simple example of such a circulant graph is described, which in our setting translates into the following: for $D=\{1/13,\;3/13,\;4/13\}$ we have $\kappa_{\mb{T}}(D)=2/13<\md_{\mb{T}}(D)=3/13$ (as can be checked by computations).
\end{remark}

Concerning upper bounds, our first remark is that for general $r$-element sets $D\subset \mb{T}$, an approach to provide upper bounds for $\md_{\mb{T}}(D)$ is indicated by Theorem \ref{thm:trans} combined with \eqref{eq:mdinf}: for every finite tiling subset $F$ of the discrete group $\mb{Z}^r/\Lambda_{D,\mb{T}}$, we have
\begin{equation}\label{eq:UB1}
\md_{\mb{T}}(D)\leq \max_{A\subset F: (A-A)\cap B_\Lambda = \emptyset} \frac{|A|}{|F|},
\end{equation}
and we know that the infimum of the right side here over such sets $F$ \emph{equals} $\md_{\mb{T}}(D)$. Developing this approach further, to obtain more explicit expressions for the right side of \eqref{eq:UB1} in terms of $D$, seems to be an interesting direction for future work.

We close this section by presenting a different approach to obtain upper bounds for $\md_{\mb{T}}(D)$ in certain special cases, using Fourier analytic techniques. Fourier analysis has been applied in several works to produce upper bounds on Motzkin densities (or measures of intersectivity), mainly in finite abelian groups, as part of a general method going back to Delsarte (see for instance \cite{Delsarte,M&R,M&R2}). Such applications to Motzkin's problem have been fewer in continuous groups. The work \cite{B&R} develops the Delsarte method in locally compact abelian groups, but only for open sets $D$ (denoted $\Omega_+$ in \cite[Theorem 1.1]{B&R}), and in a way that does not adapt immediately to finite sets $D$ as in this paper. Here we shall illustrate the use of trigonometric polynomials to obtain upper bounds for $\md_{\mb{T}}(D)$ for some families of finite sets $D\subset \mb{T}$, and we will also discuss some limitations of this approach.

As usual, we write $e(x)$ to abbreviate the complex exponential $e^{2\pi i x}$, the simplest nontrivial character on $\mb{T}$. We call \emph{real cosine polynomial} any function on $\mb{T}$ of the form
\begin{equation}\label{cos_pol}
 C(t)
 =
 \sum_{k=0}^r
 c_k\cos( 2\pi kt)
 \qquad\text{with}\quad c_k\in\mb{R}.
\end{equation}
We define the \emph{support} of its Fourier coefficients as $\{k\in \mb{Z}_{\geq 0}\,:\,c_k\ne 0\}$.

We consider a non-trivial family of sets $D\subset \mb{T}$ within the simplest case in which we currently do not have an exact formula for $\md_{\mb{T}}(D)$, namely the case rank$(\Lambda_{\mb{T},D})=r-1$. The family in question is that of sets of the form $D=\{k_1x,k_2x,\ldots,k_rx\}$ for positive integers $k_i$ and $x\in \mb{T}$. In this setting, a strong form of the $D$-avoiding property can be related to an optimization problem for cosine polynomials, resulting in upper bounds for $\md_{\mb{T}}(D)$.

\begin{proposition}\label{prop:pos_cos}
Let $x\in \mb{T}$ be arbitrary, and let $D=\{k_1x,k_2x,\dots, k_rx\}$ where $k_j\in\mb{Z}_{>0}$. Let $C(t)$ be of the form \eqref{cos_pol} with $c_0=1$ and its Fourier coefficients supported on the set $\mathcal{K}=\{0,k_1,\dots, k_r\}$. If $C\ge 0$, then any Borel set $A\subset\T$ with $\mu(A)> 1/C(0)$ satisfies $(A-A)\cap D\ne 0$, i.e.\ $\md_{\mb{T}}(D)\leq 1/C(0)$. Moreover, this also holds for $\mu(A)= 1/C(0)$ if $x$ is irrational. 
\end{proposition}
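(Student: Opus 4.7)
The plan is a Delsarte-style argument built around the autocorrelation of $\mathbf{1}_A$. Set $H(t):=\int_{\mb{T}}\mathbf{1}_A(s)\,\mathbf{1}_A(s+t)\,ds$; writing $a_n:=\int_A e(-ns)\,ds$, Parseval gives $\sum_n|a_n|^2=\mu(A)<\infty$, so the series $\sum_{n\in\mb{Z}}|a_n|^2 e(nt)$ converges absolutely to a continuous function with the same Fourier coefficients as $H$, and hence $H(t)=\sum_{n\in\mb{Z}}|a_n|^2 e(nt)$ pointwise. In particular $H$ is real, non-negative, even, with $H(0)=\mu(A)$. If $A$ is $D$-avoiding, then $\mu(A\cap(A-k_j x))=0$, i.e.\ $H(k_j x)=0$ for each $j\in\{1,\dots,r\}$.

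The next step is to evaluate $S:=\sum_{k\in\mathcal{K}}c_k H(kx)$ in two ways. Directly, the vanishing $H(k_j x)=0$ together with $c_0=1$ gives $S=c_0 H(0)=\mu(A)$. On the other hand, substituting the Plancherel expansion and swapping the finite $k$-sum with the absolutely convergent $n$-sum yields $S=\sum_n|a_n|^2 \sum_{k\in\mathcal{K}}c_k e(knx)$; since $S$ and $|a_n|^2$ are real, taking real parts replaces the inner sum by $C(nx)=\sum_{k\in\mathcal{K}}c_k\cos(2\pi knx)$, so
\[
\mu(A)=\sum_{n\in\mb{Z}}|a_n|^2 C(nx).
\]
The hypothesis $C\ge 0$ makes every summand non-negative; keeping only the $n=0$ term gives $\mu(A)^2 C(0)=|a_0|^2 C(0)\le\mu(A)$, hence $\mu(A)\le 1/C(0)$ (the bound being trivial if $\mu(A)=0$). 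This proves the first assertion.

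For the moreover part, assume $x$ is irrational and $\mu(A)=1/C(0)>0$. Equality in the previous step forces $|a_n|^2 C(nx)=0$ for every $n\ne 0$, so $\{n\ne 0:a_n\ne 0\}\subset\{n\ne 0:nx\in Z\}$ where $Z:=\{t\in\mb{T}:C(t)=0\}$. Now $C$ is a non-negative trigonometric polynomial with $\int_{\mb{T}} C=c_0=1$, so $C\not\equiv 0$, and its zero set $Z$ is therefore finite. Irrationality of $x$ makes $n\mapsto nx\bmod 1$ injective on $\mb{Z}$, so only finitely many $n$ satisfy $nx\in Z$. Hence $\mathbf{1}_A$ has only finitely many non-vanishing Fourier coefficients, so it agrees a.e.\ with a trigonometric polynomial, hence with a continuous function. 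A continuous $\{0,1\}$-valued function on the connected group $\mb{T}$ is constant, forcing $\mu(A)\in\{0,1\}$; the value $0$ contradicts $\mu(A)>0$, while $\mu(A)=1$ gives $A-A=\mb{T}\supset D$, contradicting $D$-avoidance.

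The main subtlety is the equality case: one has to know that the zero set of a non-identically-zero non-negative trigonometric polynomial is finite and to combine this with the injectivity of the orbit $\{nx\}_{n\in\mb{Z}}$ under irrationality of $x$. Neither ingredient is difficult, but together they exploit the continuous structure of $\mb{T}$ that is absent in the finite abelian group settings where the Delsarte method is usually applied.
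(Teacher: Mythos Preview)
Your argument is correct and is essentially the paper's own Delsarte-style proof: both compute $\sum_{k\in\mathcal{K}} c_k\,(\chi_A*\chi_{-A})(kx)$ via the Fourier expansion of the autocorrelation and use $C\ge 0$ to drop all but the $n=0$ term, yielding $\mu(A)\le 1/C(0)$. For the irrational equality case the paper is terser---it notes that $C(nx)>0$ for all large $n$ and asserts $F(x)>0$---while you spell out the underlying reason, namely that equality would force $\widehat{\mathbf 1_A}(n)=0$ for all but finitely many $n$, making $\mathbf 1_A$ a.e.\ a trigonometric polynomial and hence constant, which is impossible for $0<\mu(A)<1$; this is exactly the missing step the paper's assertion needs, so your version is in fact a more complete rendering of the same argument.
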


\begin{proof}
By standard Fourier-analytic results, for any Borel sets $A,B$ in $\mb{T}$ and every $x\in \mb{T}$, we have
\begin{equation}\label{eq:FourierExp}
\chi_A*\chi_B(t)=
  \mu(A)\,\mu(B)
  + \sum_{n\in\mb{Z}\setminus\{0\}}
\wh{\chi_A}(n)\, \wh{\chi_B}(n)\,e^{2\pi i n t}.
 \end{equation}
 Consider the function
 $
  F(t ) = \sum_{j=1}^dc_{k_j} \chi_A*\chi_{-A}(k_j t)
 $.
 By \eqref{eq:FourierExp},
 \[
  F(t)=
  \sum_{j=1}^d c_{k_j}\mu(A)^2
  +
  2\sum_{n=1}^\infty
  |\wh{\chi_A}(n)|^2
  \big(C(nt)-1\big).
 \]
 Adding \eqref{eq:FourierExp} with $t=0$ (Parseval's identity), we obtain
 \[
  F(t)
  =
  C(0)\mu(A)^2-\mu(A)
  +
  2\sum_{n=1}^\infty
  |\wh{\chi_A}(n)|^2
  C(nt).
 \]
 If $\mu(A)>1/C(0)$, then $F(x)>0$, so $\chi_A*\chi_{-A}(k_j x)\ne0$ for some $j$, hence $k_jx\in A-A$.

 If $x\not\in\Q$ then $nx$ are distinct modulo~$1$ for $n\in\mb{Z}_{>0}$. As the number of roots of $C(t)=0$ in $\T$ is finite, we have $C(nx)>0$ for $n$ large and $\mu(A)=1/C(0)$ gives again $F(x)>0$.
\end{proof}

\begin{remark}
The Motzkin density $\md_{\T}(D_x:=\{k_1x,\ldots,k_rx\})$ considered in Proposition \ref{prop:pos_cos} can vary as $x$ varies in $\mb{T}$ with fixed integers $k_i$. For instance, supposing that the $k_i$ are coprime, then for any irrational $x\in \mb{T}$ we have $\md_{\T}(D_x)=\md_{\mb{Z}}(D':=\{k_1,\ldots,k_r\})$ (indeed in this case $\Lambda$ has rank $r-1$ and has a primitive basis, and it can be checked that the isomorphism $\mb{Z}^r/\Lambda\to\mb{Z}$ in the proof of Proposition \ref{prop:equivZkrefined} takes $B_\Lambda$ to $D'$). On the other hand, for $x$ rational we can have $\md_{\T}(D_x)\neq \md_{\mb{Z}}(D')$. For example, for $D'=\{1,3,4\}$, by \cite[Theorem 4(b)]{Hara} we have $ \md_{\T}(D_x)= \md_{\mb{Z}}(D')=2/7$ for $x\in \mb{T}\setminus\mb{Q}$, while for $x=1/13$ we have $\md_{\T}(D_x)=3/13$, as mentioned in Remark \ref{rem:Bnonsharp}.
\end{remark}

As a first illustration of the bound given by Proposition \ref{prop:pos_cos}, we can give a simple family of non-trivial examples where this bound is sharp, namely the sets $D=\{x,2x,\dots, Nx\}$, $N\in \mb{N}$. For this we shall use the following fact concerning the Fej\'er kernel 
\[
F_N(t)=\sum_{m=-(N-1)}^{N-1}\Big(1-\frac{|m|}{N}\Big)e(mt).
\]
\begin{proposition}\label{prop:fej_opt}
 Let $C$ be a real cosine polynomial as in \eqref{cos_pol} with $c_0=1$.
 If $C\ge 0$ then $C(0)\le N+1$. The equality is reached only for the Fej\'er kernel~$F_{N+1}$.
\end{proposition}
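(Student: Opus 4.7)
The plan is to invoke the Fejér--Riesz theorem: any non-negative trigonometric polynomial on $\mathbb{T}$ of degree at most $N$ can be written as $C(t)=|P(t)|^2$ where $P(t)=\sum_{j=0}^{N}a_j\,e(jt)$ for some coefficients $a_j\in\mathbb{C}$. Once this representation is secured, the two constraints of the problem translate cleanly: the normalization $c_0=1$ says that the constant Fourier coefficient of $C$ equals $1$, which by expanding $|P|^2$ is exactly $\sum_{j=0}^{N}|a_j|^2=1$; and the quantity to be bounded is $C(0)=|P(0)|^2=\bigl|\sum_{j=0}^{N}a_j\bigr|^2$.

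With these two identities in hand, the bound is a one-line application of the Cauchy--Schwarz inequality:
\[
C(0)\;=\;\Bigl|\sum_{j=0}^{N}a_j\cdot 1\Bigr|^{2}\;\le\;(N+1)\sum_{j=0}^{N}|a_j|^{2}\;=\;N+1.
\]
For the equality case, Cauchy--Schwarz forces the vector $(a_0,\dots,a_N)$ to be a scalar multiple of $(1,\dots,1)$, so $a_j=\lambda$ for all $j$ with $(N+1)|\lambda|^2=1$, i.e.\ $|\lambda|=1/\sqrt{N+1}$. Since $C=|P|^2$ is insensitive to multiplying $P$ by a unimodular constant, this determines $C$ uniquely, and a direct expansion $|P(t)|^2=\frac{1}{N+1}\bigl|\sum_{j=0}^{N}e(jt)\bigr|^2$ recognizes the resulting polynomial as the Fejér kernel $F_{N+1}$ (the coefficient of $e(mt)$ comes out to $1-|m|/(N+1)$ for $|m|\le N$, matching the formula stated in the paper).

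The only non-routine ingredient is the Fejér--Riesz factorization itself, but this is a classical and well-known result, so I do not expect a genuine obstacle. The remainder is just Cauchy--Schwarz and a bookkeeping check of the equality case; I would state the factorization as a citation and spend most of the written proof on the clean computation of $\sum|a_j|^2$ and $|P(0)|^2$ together with the verification that the extremal $P$ produces $F_{N+1}$.
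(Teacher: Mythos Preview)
Your proposal is correct and follows essentially the same approach as the paper: Fej\'er--Riesz factorization $C=|P|^2$, then Cauchy--Schwarz on $|P(0)|^2$ using $\sum|a_j|^2=1$, with the equality case forcing all $a_j$ equal and hence $C=F_{N+1}$. The only cosmetic difference is that the paper invokes the real-coefficient refinement of Fej\'er--Riesz (valid since $C$ is even), whereas you work with complex $a_j$ and absorb the extra unimodular freedom at the end; both routes reach the same conclusion.
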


\begin{proof}
The Fej\'er-Riesz theorem \cite[\S6.3]{montgomery} assures that every nonnegative trigo\-nometric polynomial is of the form
\[
C(t)=\Big|\sum_{k=0}^N b_k e(kt)\Big|^2.
\]
For $C$ even, we can take $b_k$ to be real \cite[\S3]{GoMo}. The condition $1=c_0=\int_0^1C$ is equivalent to $\sum b_k^2=1$ and Cauchy-Schwarz inequality gives $C(0)\le (N+1)\sum b_k^2=N+1$. Equality holds here only when $b_0=b_1=\dots=b_N$, and the normalization then imposes $b_j^2= 1/(N+1)$. Then $C(t)=\frac{1}{N+1}\big|\sum_{k=0}^N e(kt)\big|^2=F_{N+1}(t)$.
\end{proof}

Taking $C=F_{N+1}$ in Proposition~\ref{prop:pos_cos}, we obtain that $\md_{\mb{T}}(D)\leq 1/(N+1)$, and this can be seen to be sharp. For instance, for real and arbitrarily small $x>0$, and $M=\lfloor \tfrac{1-x}{x(N+1)}\rfloor$, the set $A=\bigcup_{k=0}^M \big[k(N+1)x,k(N+2)x\big)\subset [0,1)$  is $D$-avoiding of measure arbitrarily close to $ 1/(N+1)$. Let us note, however, that the upper bound $\md_{\mb{T}}(D)\leq 1/(N+1)$ admits an elementary simple proof, without using Fourier analysis, because the set $\{1,2,\dots,N\}$ is closed under taking absolute values of the differences. Namely, if $A$ is $D$-avoiding, then for any $0\le \ell<k\le N$ we have $(k-\ell)x\not\in A$, and so $(A+kx)\cap(A+\ell x)=\emptyset$, which implies that $1\ge\mu(\bigcup_{k=0}^N k+A)= \sum_{k=0}^N\mu(k+A)=(N+1)\mu(A)$. On the other hand, the following extreme case afforded by the last sentence of Proposition~\ref{prop:pos_cos} does not seem to be covered by a variation on this elementary argument.

\begin{corollary}
 Let $D=\{x,2x,\dots, Nx\}$ with $x\in\T\setminus\Q$. There does not exist a $D$-avoiding subset $A\subset\T$ with $\mu(A)=1/(N+1)$.
\end{corollary}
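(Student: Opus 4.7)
The plan is to apply Proposition~\ref{prop:pos_cos} with $C$ chosen as the Fej\'er kernel $F_{N+1}$, and observe that all hypotheses are automatically met for this special family $D=\{x,2x,\dots,Nx\}$.

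First I would verify that $F_{N+1}$ is an admissible choice of cosine polynomial. Writing
\[
F_{N+1}(t)=\sum_{m=-N}^{N}\Big(1-\frac{|m|}{N+1}\Big)e(mt)=1+2\sum_{k=1}^{N}\Big(1-\frac{k}{N+1}\Big)\cos(2\pi k t),
\]
one reads off that $F_{N+1}$ is a real cosine polynomial of the form \eqref{cos_pol} with $c_0=1$, with Fourier coefficients supported on $\{0,1,2,\ldots,N\}$, and satisfying $F_{N+1}\geq 0$ and $F_{N+1}(0)=N+1$. Since here $k_j=j$ for $j\in[N]$, the support set $\mathcal{K}=\{0,k_1,\dots,k_N\}$ in Proposition~\ref{prop:pos_cos} is exactly $\{0,1,\dots,N\}$, matching the support of $F_{N+1}$.

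Next I would invoke the last sentence of Proposition~\ref{prop:pos_cos}: because $x$ is irrational by assumption, the conclusion ``$(A-A)\cap D\neq\emptyset$'' applies not only for $\mu(A)>1/C(0)$ but also for $\mu(A)=1/C(0)=1/(N+1)$. Hence no Borel set $A\subset\mb{T}$ with $\mu(A)=1/(N+1)$ can be $D$-avoiding, which is the claim.

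There is essentially no obstacle here; the corollary is a direct instantiation of Proposition~\ref{prop:pos_cos}, and the only creative step is selecting the extremal polynomial. Proposition~\ref{prop:fej_opt} confirms that $F_{N+1}$ is in fact the unique optimal choice in the class of admissible $C$, which is why the bound $1/(N+1)$ on $\md_\mb{T}(D)$ is sharp and why the equality case can be ruled out precisely at this threshold.
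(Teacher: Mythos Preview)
Your proposal is correct and follows exactly the approach indicated in the paper: the corollary is stated there without a separate proof, being an immediate application of the last sentence of Proposition~\ref{prop:pos_cos} with $C=F_{N+1}$, precisely as you describe.
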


Providing upper bounds via Proposition \ref{prop:pos_cos} for other choices of sets $D=\{k_ix:i\in[r]\}$ relies on designing appropriate cosine polynomials, and there are certainly cases in which this can fail to produce sharp bounds. 
Consider for instance the case $D=\{x,3x,8x\}$ for an arbitrary fixed $x\in\T$. We claim that
\begin{equation}\label{eq:cosC}
C(t):= \frac{9}{28}\cos(8t) + \frac{209}{252}\cos(3t)+ \frac{1553}{6048}\cos(t) +1 >0.
\end{equation}
This can be proved applying Sturm's theorem to 
$\frac{9}{28}T_8(x) + \frac{209}{252}T_3(x)+ \frac{1553}{6048}T_1(x) +1$
where $T_k$ are the Chebyshev polynomials defined by $\cos(k\arccos x)$ for $x\in [-1,1]$. From the above positivity, it follows by Proposition~\ref{prop:pos_cos} that $\md_{\mb{T}}(D)\le 6048/14561\approx 0.415356$. 

On the other hand, it can be shown (by relating $\md_{\mb{T}}(D)$ to $\md_{\mb{Z}}(\{1,3,8\})$ and using \cite[Theorem 4(b)]{Hara}) that for appropriate $x$ we have $\md_{\mb{T}}(D)=4/11\approx 0.3636$. Unfortunately, we cannot reach this number with variations of the polynomial $C$ above. This can be proved by considering the linear programming problem
$x_1\cos(8t_k) + x_2\cos(3t_k)+ x_3\cos(t_k) +1 > 0$ where $t_k=k\pi/N$, $k=0,\dots , N-1$, with the objective function 
$x_1+x_2+x_3+1$. For $N$ large enough, a computer calculation shows that the objective function is less than $2.4088$, in particular, no positive cosine polynomial can give $\md_{\mb{T}}(D)<1/2.4088\approx 0.415144$, and so the polynomial $C$ in \eqref{eq:cosC} is nearly optimal. 

\appendix

\section{Reduction to metrizable compact abelian groups}\label{App:reduc}

We prove the following claim made in the first part of the proof of Theorem  \ref{thm:trans}.
\begin{proposition}\label{prop:metrizclaim}
Let $\ab$ be a compact abelian group and let $D$ be a finite subset of $\ab\setminus\{0\}$. Then there exists a compact metrizable abelian group $H$ and a  continuous surjective homomorphism $q:\ab\to H$, such that $\md_{\ab}(D)=\md_H(q(D))$ and $\Lambda_{H,q(D)}=\Lambda_{\ab,D}$.
\end{proposition}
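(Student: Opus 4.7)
The plan is to realize $H$ as a quotient $\ab/K$ for a suitably chosen closed subgroup $K \subset \ab$, small enough to witness $\md_\ab(D)$ arbitrarily closely, while keeping $\ab/K$ metrizable and preserving every $\mb{Z}$-linear relation among the elements of $D$. The easier inequality $\md_H(q(D)) \leq \md_\ab(D)$ is immediate for any continuous surjective homomorphism $q$: by uniqueness of Haar measure, $q_*\mu$ is the Haar measure $\nu_H$ on $H$, so any $q(D)$-avoiding Borel set $B\subset H$ pulls back to a $D$-avoiding Borel set $q^{-1}(B)\subset\ab$ with $\mu(q^{-1}(B))=\nu_H(B)$.

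For the matching lower bound I would proceed as follows. For each $n\in\mb{N}$, inner regularity furnishes a compact $D$-avoiding set $C_n\subset\ab$ with $\mu(C_n)>\md_\ab(D)-1/n$. Since $C_n-C_n$ is compact and disjoint from the finite closed set $D$, a standard uniformity argument produces an open neighborhood $V_n$ of $0$ with $(C_n-C_n+V_n)\cap D=\emptyset$. The structure theory of compact abelian groups (they are inverse limits of their compact abelian Lie quotients) then yields a closed subgroup $K_n\subset V_n$ such that $\ab/K_n$ is a Lie group, in particular metrizable. Setting $K_0:=\bigcap_n K_n$, the annihilator $K_0^\perp=\sum_n K_n^\perp\subset\wh{\ab}$ is a countable union of countable groups, hence countable, so $\ab/K_0$ is itself metrizable.

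To guarantee $\Lambda_{H,q(D)}=\Lambda_{\ab,D}$, observe that this equality holds precisely when $K\cap\langle D\rangle=\{0\}$, where $\langle D\rangle$ is the (countable) subgroup of $\ab$ generated by $D$. Enumerating $\langle D\rangle\setminus\{0\}=\{s_1,s_2,\ldots\}$, Pontryagin duality supplies characters $\chi_j\in\wh{\ab}$ with $\chi_j(s_j)\neq 1$. Define
\[
K:=K_0\cap\bigcap_{j\in\mb{N}}\ker\chi_j,
\]
so that $K^\perp$ is still countable, $H:=\ab/K$ is metrizable, and $K\cap\langle D\rangle=\{0\}$ by construction. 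For each $n$, since $K\subset K_n\subset V_n$ we have
\[
(C_n+K)-(C_n+K)=(C_n-C_n)+K\subset (C_n-C_n)+V_n,
\]
which is disjoint from $D$. Consequently the compact set $q(C_n)\subset H$ is $q(D)$-avoiding (if $q(c_1)-q(c_2)=q(d)$ then $c_1-c_2-d\in K$, placing $d$ in $(C_n-C_n)+K$, contrary to the above), and
\[
\nu_H(q(C_n))=\mu(q^{-1}q(C_n))=\mu(C_n+K)\geq\mu(C_n)>\md_\ab(D)-1/n.
\]
Letting $n\to\infty$ gives $\md_H(q(D))\geq\md_\ab(D)$, so equality holds.

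The main obstacle I anticipate is the appeal to the structure-theoretic fact that closed subgroups with Lie (equivalently, metrizable) quotient form a neighborhood base at $0$; this is standard but nontrivial, and it must be combined with the diagonal construction above so that the single $K$ simultaneously sits inside every $V_n$, has countable-dual quotient, and misses the countable set $\langle D\rangle\setminus\{0\}$. Once these compatibilities are verified, the rest is a straightforward measure-theoretic bookkeeping on compact and $K$-saturated sets.
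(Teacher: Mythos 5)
Your argument is correct, and it takes a genuinely different route from the paper's. The paper works entirely on the dual side: it builds a countable subgroup $\Gamma\subset\wh{\ab}$ generated by (a) characters detecting $\mb{Z}^r\setminus\Lambda_{\ab,D}$ and (b) the Fourier supports (via Plancherel) of a sequence of near-optimal $D$-avoiding Borel sets $A_k$, and then observes that a set whose Fourier transform is supported in $\Gamma$ agrees up to a null set with the pull-back of a Borel set in $H=\wh{\Gamma}$. You instead work on the group side: inner regularity of Haar measure gives compact near-optimal $D$-avoiding sets $C_n$, the compactness of $C_n-C_n$ and closedness of $D$ give a uniform separating neighbourhood $V_n$, and the structure theory of compact abelian groups (Lie quotients form a neighbourhood basis of closed subgroups) produces $K_n\subset V_n$ with metrizable quotient; intersecting with the kernels of characters separating $\langle D\rangle\setminus\{0\}$ from $0$ then forces $\Lambda_{H,q(D)}=\Lambda_{\ab,D}$. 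Both constructions produce a quotient $\ab\to H$ with countable dual, so they are duality-equivalent — your $K$ plays the role of $\Gamma^\perp$ — but the mechanism ensuring near-optimal sets descend to $H$ is different: Fourier-support saturation in the paper versus topological thickening of compact sets in yours. Your version avoids the Plancherel step at the cost of invoking the (standard, but not light) projective-limit/Lie-quotient theorem, which is available in the cited Hofmann--Morris reference. Two small points of hygiene: the inequality $\md_H(q(D))\leq\md_{\ab}(D)$ is clean only once you know $q(D)\subset H\setminus\{0\}$, which your construction of $K$ guarantees but which you state before fixing $K$; and $K_0^\perp$ is the subgroup \emph{generated by} $\bigcup_n K_n^\perp$ rather than the literal union, though countability and discreteness of $\wh{\ab}$ make the conclusion the same.
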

\begin{proof}
We obtain the map $q:\ab\to H$ using Pontryagin duality: we shall find a countable subgroup $\Gamma$ of the dual group $\wh{\ab}$ such that $H:=\wh{\Gamma}$ satisfies $\md_{\ab}(D)=\md_H(q(D))$ and $\Lambda_{H,q(D)}=\Lambda_{\ab,D}$. Since $\Gamma$ is countable we will also have that $H$ is metrizable.

First, we let $S_0$ be a countable subset of $\wh{\ab}$ consisting of characters which detect if $n\in \mb{Z}^r\setminus \Lambda_{\ab,D}$. More precisely, for every $n\in \mb{Z}^r\setminus \Lambda_{\ab,D}$, since $n_1t_1+\cdots+n_r t_r \neq 0$, the fact that $\wh{\ab}$ separates points on $\ab$ (see e.g.\ \cite[p.\ 24]{rudin}) implies that there exists $\chi_n\in \wh{\ab}$ such that $\chi_n(n_1t_1+\cdots+n_r t_r)\neq 1$. We let $S_0=\{\chi_n:n\in \mb{Z}^r\setminus \Lambda_{\ab,D}\}$. Note that for any countable subgroup $\Gamma$ of $\wh{\ab}$, if $\Gamma\supset S_0$ then for the corresponding quotient map $q:\ab \to \wh{\Gamma}$ we have that $\wh{\Gamma}$ is metrizable and $\Lambda_{\wh{\Gamma},q(D)}=\Lambda_{\ab,D}$. Indeed the inclusion $\Lambda_{\wh{\Gamma},q(D)}\supset \Lambda_{\ab,D}$ is clear, and for the opposite inclusion, since $\Gamma\supset S_0$ we have $\mb{Z}^r\setminus \Lambda_{\ab,D} \subset \mb{Z}^r\setminus \Lambda_{\wh{\Gamma},q(D)}$. To see the latter inclusion note that if $n_1t_1+\cdots n_r t_r\neq 0$ then by \cite[p.\ 35, (1)]{rudin} there is a character $\phi$ on $\ab/\wh{\Gamma}$ such that $\phi(q(n_1t_1+\cdots n_r t_r))=\chi_n(n_1t_1+\cdots n_r t_r)\neq 1$, so $q(n_1t_1+\cdots n_r t_r)\neq 0$.

Now we shall add countably many characters to $S_0$ to ensure also that the property involving Motzkin densities holds. For each positive integer $k$, let $A_k$ be a $D$-avoiding Borel subset of $\ab$ such that $\mu_{\ab}(A_k)\geq \md_{\ab}(D)-1/k$. By Plancherel's theorem, the Fourier transform $\wh{1}_{A_k}$ is supported on a countable subset $S_k\subset \wh{\ab}$. Let $S=S_0\cup \bigcup_{k\in \mb{N}}S_k$. 

As $S$ is countable, the subgroup $\Gamma=\langle S\rangle$ is countable. Now let  $H=\wh{\Gamma}$, which is a metrizable quotient of $\ab$. Letting $q:\ab\to H$, it remains to show that $\md_{\ab}(D)=\md_H(q(D))$. Note that if $A\subset H$ is $q(D)$-avoiding, then $q^{-1}(A)$ is $D$-avoiding with $\mu_{\ab}(q^{-1}(A))=\mu_H(A)$, so $\md_H(q(D))\leq \md_{\ab}(D)$. 
To see the opposite inequality, fix any $k\in \mb{N}$ and note that, since the support of $\wh{1_{A_k}}$ is included in $\Gamma$, there exists a Borel set $A_k'\subset H$ such that $\mu_{\ab}((q^{-1}A_k')\Delta A_k)=0$. This implies that $q^{-1}(A_k')$ is $D$-avoiding modulo a $\mu_{\ab}$-null set, and it follows that $A_k'$ is $q(D)$-avoiding modulo a $\mu_H$-null set. Removing this null set from $A_k'$ we obtain a $q(D)$-avoiding Borel set $A_k''$, with $\mu_H(A_k'')=\mu_{\ab}(q^{-1}A_k')=\mu_{\ab}(A_k)\geq \md_{\ab}(D)-1/k$, so $\md_H(q(D))\geq \md_{\ab}(D)-1/k$, whence the desired inequality follows by letting $k\to \infty$.
\end{proof}

\end{document}